\newcommand{\res}{\!\!\mathop{\hbox{
                                \vrule height 7pt width .5pt depth 0pt
                                \vrule height .5pt width 6pt depth 0pt}}
                                \nolimits}
\newtheorem{theorem}{Theorem}[section]
\newtheorem{lemma}[theorem]{Lemma}
\newtheorem{definition}[theorem]{Definition}
\newtheorem{proposition}[theorem]{Proposition}
\newtheorem{example}[theorem]{Example}
\newtheorem*{theorem*}{\it Theorem}
\def\vint_#1{\mathchoice%
          {\mathop{\kern 0.2em\vrule width 0.6em height 0.69678ex depth -0.58065ex
                  \kern -0.8em \intop}\nolimits_{\kern -0.4em#1}}%
          {\mathop{\kern 0.1em\vrule width 0.5em height 0.69678ex depth -0.60387ex
                  \kern -0.6em \intop}\nolimits_{#1}}%
          {\mathop{\kern 0.1em\vrule width 0.5em height 0.69678ex
              depth -0.60387ex
                  \kern -0.6em \intop}\nolimits_{#1}}%
          {\mathop{\kern 0.1em\vrule width 0.5em height 0.69678ex depth -0.60387ex
                  \kern -0.6em \intop}\nolimits_{#1}}}
\def\vintslides_#1{\mathchoice%
          {\mathop{\kern 0.1em\vrule width 0.5em height 0.697ex depth -0.581ex
                  \kern -0.6em \intop}\nolimits_{\kern -0.4em#1}}%
          {\mathop{\kern 0.1em\vrule width 0.3em height 0.697ex depth -0.604ex
                  \kern -0.4em \intop}\nolimits_{#1}}%
          {\mathop{\kern 0.1em\vrule width 0.3em height 0.697ex depth -0.604ex
                  \kern -0.4em \intop}\nolimits_{#1}}%
          {\mathop{\kern 0.1em\vrule width 0.3em height 0.697ex depth -0.604ex
                  \kern -0.4em \intop}\nolimits_{#1}}}
\def\R{\mathbb R}
\numberwithin{equation}{section}
\def\D{\displaystyle}
\def\1{\raisebox{2pt}{\rm{$\chi$}}}
\def\Xint#1{\mathchoice
   {\XXint\displaystyle\textstyle{#1}}%
   {\XXint\textstyle\scriptstyle{#1}}%
   {\XXint\scriptstyle\scriptscriptstyle{#1}}%
   {\XXint\scriptscriptstyle\scriptscriptstyle{#1}}%
   \!\int}
\def\XXint#1#2#3{{\setbox0=\hbox{$#1{#2#3}{\int}$}
     \vcenter{\hbox{$#2#3$}}\kern-.5\wd0}}
\def\dashint{\Xint-}
\newcommand{\twopartdef}[4]
{
\left\{
		\begin{array}{ll}
			#1 & #2 \\
			#3 & #4
		\end{array}
	\right.
}
\definecolor{violet(ryb)}{rgb}{0.53, 0.0, 0.69}
\begin{document}

\title[1-Laplacian in Carnot groups]{\bf Local and nonlocal 1-Laplacian in Carnot groups}

\author[W. G\'orny]{Wojciech G\'orny}

\address{W. G\'orny: Faculty of Mathematics, Informatics and Mechanics, University of Warsaw, Banacha 2, 02-097 Warsaw, Poland.
\hfill\break\indent
{\tt w.gorny@mimuw.edu.pl }}

\keywords{Random walk, Least gradient functions, Carnot groups, Nonlocal problems. \\
\indent 2010 {\it Mathematics Subject Classification:} 35R03, 49J45, 49K20, 49Q05.
}

\setcounter{tocdepth}{1}

%

\begin{abstract}
We formulate and study the nonlocal and local least gradient problem, which is the Dirichlet problem for the 1-Laplace operator, in a quite natural setting of Carnot groups. We study the passage from the nonlocal problem to the local problem as the range of the interaction goes to zero; to do this, we first prove a total variation estimate of independent interest.
\end{abstract}

\maketitle


\section{Introduction}

In the Euclidean setting, the least gradient problem is the following problem of minimisation 
\begin{equation}\tag{LGP}\label{problem}
\min \bigg\{ \int_\Omega |Du|: \quad u \in BV(\Omega), \quad u|_{\partial\Omega} = f \bigg\}.
\end{equation}
The problem in this form was introduced in \cite{SWZ}, but the ideas can be traced back to the pioneering work of Bombieri, de Giorgi and Giusti in \cite{BGG}, which relate the functions which locally minimise the total variation with minimal surfaces. This type of problems, including anisotropic cases, appears as dimensional reduction in the free material design (\cite{GRS2017NA}), conductivity imaging (\cite{JMN}), and has links to the optimal transport problem (\cite{DG2019,DS}).

In the paper \cite{MRL}, Maz\'on, Rossi and Segura de Le\'on showed that the 1-Laplace equation
\begin{equation}
- \mbox{div}\bigg(\frac{Du}{|Du|}\bigg) = 0
\end{equation}
can be understood as the Euler-Lagrange equation for the least gradient problem. The first step, namely convergence of solutions to the $p$-Laplace equations with continuous Dirichlet boundary data to the solutions of the corresponding least gradient problem as $p \rightarrow 1$, has been shown by Juutinen in \cite{Juu}; the authors of \cite{MRL} have provided an Euler-Lagrange characterisation of solutions to \eqref{problem} in terms of Anzelotti pairings and proved existence of solutions to such a problem using approximations by $p$-harmonic functions. In this paper, we follow this approach to the least gradient problem, but we work in the setting of Carnot groups.

The motivation for this paper comes from the nonlocal version of the least gradient problem. The nonlocal versions of 1-Laplace and $p$-Laplace equations, including evolution problems, have been studied for instance in \cite{AMRT1, AMRT,GM2019,MPRT,MST1}. The two main examples of nonlocal interaction are Markov chains on locally finite graphs and the interaction governed by a nonnegative radial kernel on $\mathbb{R}^n$. In the latter case, the nonlocal least gradient problem (the Dirichlet problem for the 1-Laplace operator) takes form
\begin{equation}\left\{\begin{array}{ll}
\D - \int_{\mathbb{R}^n} J(x-y) \frac{u_\psi(y) - u(x)}{|u_\psi(y) - u(x)|} \, dy = 0, &x\in\Omega,\\
u=\psi,& x\in \mathbb{R}^n \backslash \Omega,
\end{array}\right.
\end{equation}
where $\psi \in L^\infty(\mathbb{R}^n)$, $u_\psi$ is an extension of $u$ by $\psi$ outside of $\Omega$, and $J: \mathbb{R}^n \rightarrow [0, \infty)$ is a continuous radial function compactly supported in the unit ball such that $J(0) > 0$ and $\int_{\mathbb{R}^n} J dx = 1$. In \cite{MPRT}, the authors show existence of solutions to such a problem and show that equivalently we may minimise the nonlocal total variation as follows:
\begin{equation}
\min \bigg\{ \frac12 \int_{\mathbb{R}^n} \int_{\mathbb{R}^n} J(x-y) |u_\psi(y) - u_\psi(x)| \, dx \, dy: \quad u \in L^1(\Omega), \quad \psi \in L^1(\mathbb{R}^n) \bigg\}.
\end{equation}
A natural question is what happens if we rescale the kernel $J$ so that its support lies in a ball $B(0,\varepsilon)$ instead of the unit ball - are the nonlocal problems a good approximation to the local problem? In the Euclidean space, the authors of \cite{MPRT} show that the answer is yes and on a subsequence solutions to the nonlocal problems converge to a solution to the local problem.

In a recent paper \cite{GM2019} the authors introduce the least gradient problem in a nonlocal setting on metric measure spaces. Given a metric measure space $(X,d,\nu)$, the nonlocal interaction is governed by a random walk $m$, which is a collection of probability measures $m_x$ which are invariant and reversible with respect to the underlying measure $\nu$. The nonlocal total variation of a function $u \in L^1(X,\nu)$ in this case takes the form
$$ TV_m(u) = \frac12 \int_{X} \int_{X} |u(y) - u(x)| \, dm_x(y) \, d\nu(x).$$
Then, the authors introduce an Euler-Lagrange characterisation of the minimisers, which depends on whether the space supports a nonlocal Poincar\'e inequality for all $1 \leq p < \infty$. Then, the natural question is whether an analogue of the convergence mentioned above holds; namely, if we take the $\varepsilon$-step random walk (a natural generalisation of the random walk generated by the kernel $J$ to the setting of metric measure spaces)
$$ m_x = m_x^{\nu,\varepsilon} = \frac{\nu \res B(x,\varepsilon)}{\nu(B(x,\varepsilon))},$$
take solutions $u_\varepsilon$ to the nonlocal least gradient problem and pass to the limit $\varepsilon \rightarrow 0$, we want to know if the limit function $u$ (if it exists) is a solution to the local least gradient problem in some appropriate sense. 

Now, we come to the motivation why we study this problem in the setting of Carnot groups. The fact that $u_\varepsilon$ minimise the nonlocal total variation gives us a uniform bound on the nonlocal total variations of the form
\begin{equation}
\int_{X} \dashint_{B(x,\varepsilon)} |(u_\varepsilon)_\psi(y) - (u_\varepsilon)_\psi(x)| \, d\nu(y) \, d\nu(x) \leq M \varepsilon.
\end{equation}
Notice that the set on which we have this bound is the $\varepsilon$-neighbourhood of the diagonal in $X \times X$, in particular it changes with $\varepsilon$ and disappears in the limit $\varepsilon \rightarrow 0$. In order to pass to the limit, we will perform a blow-up argument, so that the domain remains the same. However, a blow-up argument requires a lot of structure of the metric space - we need to have well-defined directions (so that the difference quotients converge to something, i.e. directional derivatives) and the possibility to rescale balls in a controlled way. Therefore, it seems that we should work on a group which admits dilations which change the measure of balls in a controlled way. The prime example of such metric spaces are Carnot groups; on a Carnot group $\mathbb{G}$, which (in exponential coordinates) is the space $\mathbb{R}^n$ endowed with a group structure generated using the Baker-Campbell-Hausdorff formula from smooth horizontal vector fields $X_1, ..., X_m$, we have well-defined dilations, bounded domains satisfy a nonlocal Poincar\'e inequality for all $1 \leq p < \infty$, and the Lebesgue measure $\mathcal{L}^n$ is left-invariant and Ahlfors regular. Therefore, in this paper we will restrict our considerations to the setting of Carnot groups.

The structure of the paper is as follows: in Section \ref{sec:2preliminaries}, we recall the definitions of the Carnot groups, in particular the properties of exponential coordinates and trace and extension theorems. Then, we recall the formulations of the (local) least gradient problem in a Euclidean space and of the nonlocal least gradient problem in a metric random walk space. In Section \ref{sec:3lgpcarnot}, we introduce an Anzelotti-type pairing on Carnot groups, which allows us to formulate the (local) Dirichlet problem for the 1-Laplace equation on Carnot groups and study its properties.

In the final Section, we pass to the limit with $\varepsilon \rightarrow 0$. Firstly, in Theorem \ref{thm:amrtincarnotgroups} we show that a uniform bound on the nonlocal gradients of a given sequnce implies that the limit function lies in an appropriate Sobolev space or the BV space with respect to the Carnot group structure. Then, in Theorem \ref{thm:connectionwithlocal} we prove that on some subsequence solutions to the nonlocal problems converge weakly to the solutions of the local problem; in particular, this is an existence result for the local problem. Let us stress that the properties of Carnot groups are essential for the proofs presented here to work: we repeatedly use the group structure, left invariance of the Lebesgue measure, the fact that dilations behave differently in the horizontal directions than the other ones, and the symmetry of the balls in the box distance in horizontal directions.

\section{Preliminaries}\label{sec:2preliminaries}

\subsection{Carnot-Carath\'eodory distance}

In this subsection, we recall the notion of the Carnot-Carath\'eodory distance on $\mathbb{R}^n$ and $BV$ functions with respect to this distance (see for instance \cite{Vitt}). Later, we  restrict our focus to Carnot groups. The group structure will play a crucial part in the definition of Anzelotti pairings and in the proofs in Section \ref{sec:4nonlocalconvergence}.

\begin{definition}\label{dfn:ccdistance}
Let $X = (X_1, ..., X_m)$ be a family of smooth vector fields on $\mathbb{R}^n$ with $m \leq n$. 
We will say that $X_1,...,X_m$ are horizontal vector fields. We say that $\gamma: [0,T] \rightarrow \mathbb{R}^n$ is a subunit path, if
$$ \dot{\gamma}(t) = \sum_{j=1}^m h_j(t) X_j(\gamma(t)) \qquad \mbox{with} \qquad \sum_{j=1}^m h_j^2(t) \leq 1 \quad \mbox{for a.e. } t\in [0,T]$$
with $h_1,...,h_m$ measurable. We define the Carnot-Carath\'eodory (CC) distance on $\mathbb{R}^n$ as
\begin{equation*}
d_{cc}(x,y) = \inf \bigg\{ T \geq 0: \mbox{there is a subunit path } \gamma: [0,T] \rightarrow \mathbb{R}^n \mbox{ s.t. } \gamma(0)=x, \gamma(T)=y  \bigg\}.
\end{equation*}
Whenever the CC distance is finite for any pair $x,y \in  \mathbb{R}^n$, the space $\mathbb{R}^n$ endowed with the CC distance is a metric space. The space $(\mathbb{R}^n, d_{cc})$ is called a Carnot-Carath\'eodory space.
\end{definition}

In order to introduce $BV$ spaces with respect to the distance $d_{cc}$, we will assume the following connectivity condition
$$ d_{cc} \mbox{ is finite and the identity map } (\mathbb{R}^n, d_{cc}) \rightarrow (\mathbb{R}^n, |\cdot|) \mbox{ is a homeomorphism.}$$
There are a number of conditions which imply the above; as we will work primarily with Carnot groups, let us recall only the {\it Chow-H\"ormander condition}. Let $\mathfrak{L}(X_1,...,X_m)$ be the Lie algebra generated by the vector fields $X_1,...,X_m$. Then we require that
$$ \mbox{rank } \mathfrak{L}(X_1,...,X_m) = n.$$
On Carnot groups this condition is automatically satisfied, see Definitions \ref{dfn:liealgebrastratified}-\ref{dfn:carnotgroup}.

The most simple example of a Carnot-Carath\'eodory space is the Euclidean space $\mathbb{R}^n$ with the Euclidean distance arising from the family given by partial derivatives $(\partial_{x_1}, ..., \partial_{x_n})$; the following definitions then coincide with the classical definitions. A standard nontrivial example is the Heisenberg group described in Example \ref{ex:heisenberggroup}.

\begin{definition}\label{def:horizontalgradient}
Given a Lebesgue measurable function $u: \Omega \rightarrow \mathbb{R}$ defined on an open set $\Omega$, we define its {\it horizontal gradient} as
$$ Xu = (X_1 u, ..., X_m u),$$
where the derivatives are understood in the sense of distributions. We will say that $u \in C^1_X(\Omega)$, if both $u$ and $Xu$ are continuous, i.e. $u \in C(\Omega)$ and $Xu \in C(\Omega; \mathbb{R}^m)$.
\end{definition}

Then, we define the space of functions of bounded $X$-variation. Given a vector field $g = (g_1, ..., g_m) \in C_c^1(\Omega; \mathbb{R}^m)$, we define its $X$-divergence as
$$ \mbox{div}_X(g) = - \sum_{j = 1}^m X_j^* g_j,$$
where $X_j^*$ is the formal adjoint operator of $X_j$. In coordinates, if $X_j = \sum_{i=1}^n a_{ij}(x) \partial_i$, formally we have 
$$ X_j^* \psi(x) = - \sum_{i=1}^n \partial_i (a_{ij} \psi) (x).$$
On a Carnot group the adjoint operator is given by $X^* = -X$, see for instance \cite[Lemma 1.30]{VittPhD}; this is one of the reasons why we choose to work with Carnot groups in this paper. Now, we define the $X$-variation in a similar way as in the Euclidean case:

\begin{definition}
Let $\Omega \subset \mathbb{R}^n$ be open. We say that $u \in L^1(\Omega)$ has bounded {\it $X$-variation} in $\Omega$, if
$$ |Xu|(\Omega) = \sup \bigg\{ \int_\Omega u \, \mbox{div}_X(g) \, d\mathcal{L}^n: \quad g \in C_c^1(\Omega; \mathbb{R}^m), |g|_\infty \leq 1 \bigg\} $$
is finite. The space of functions with bounded $X$-variation in $\Omega$ is denoted by $BV_X(\Omega)$; endowed with the norm
$$ \| u \|_{BV_X(\Omega)} =  \| u \|_{L^1(\Omega)} + |Xu|(\Omega)$$
it is a Banach space. Moreover, $u \in BV_X(\Omega)$ if and only if $Xu$ is a (vectorial) Radon measure with finite total variation.
\end{definition}

The space $BV_X(\Omega)$ enjoys some properties of the Euclidean BV spaces such as lower semicontinuity of the total variation. We will focus mostly on traces of functions in $BV_X(\Omega)$: in order to perform the construction of Anzelotti pairings, we firstly define certain objects for smooth functions and have to approximate a given function $u \in BV_X(\Omega)$ in such a way that the trace of the limit function is preserved; for this approximation in the Euclidean case, see \cite[Lemma 5.2]{Anz}. To this end, we recall a few results concerning trace theory in Carnot-Carath\'eodory spaces, following \cite{Vitt} (see also \cite{MM}).

For a set $E \subset \mathbb{R}^n$ we define the $X$-perimeter measure $|\partial\Omega|_X$ as the $X$-variation of its characteristic function $\chi_E$. Suppose that $E$ has finite $X$-perimeter in $\Omega$ and denote by $\nu_E: \mathbb{R}^n \rightarrow S^{m-1}$ the density of $X\chi_E$ with respect to $|X\chi_E|$, i.e. $X\chi_E = \nu_E |X\chi_E|$ as measures. Then
$$ \int_E \mbox{div}_X g \, d\mathcal{L}^n = - \int_\Omega g \cdot \nu_E \, d|\partial\Omega|_X.$$
We call such $\nu_E$ the {\it horizontal inner normal} to $E$. 

The following theorem proved in \cite[Theorem 1.4]{Vitt} asserts the existence of traces of functions in $BV_X(\Omega)$ and shows that their traces lie in $L^1(\partial\Omega, |\partial\Omega|_X)$. 

\begin{theorem}\label{thm:vittonetraces}
Let $\Omega \subset \mathbb{R}^n$ be an $X$-Lipschitz domain with compact boundary. Then, there exists a bounded linear operator
$$ T: BV_X(\Omega) \rightarrow L^1(\partial\Omega, |\partial\Omega|_X) $$
such that
\begin{equation}
\int_\Omega u \, \mbox{div}_X g \, d\mathcal{L}^n + \int_\Omega g \, d Xu = \int_{\partial\Omega} g \cdot \nu_\Omega \, Tu \, d|\partial\Omega|_X
\end{equation}
for any $u \in BV_X(\Omega)$ and $g \in C^1(\mathbb{R}^n, \mathbb{R}^m)$. Here, $\nu_\Omega$ is the horizontal inner normal to $\Omega$. 
\end{theorem}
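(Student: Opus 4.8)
The plan is to establish the Gauss--Green identity first on a dense subclass where it is elementary, and then to extend it to all of $BV_X(\Omega)$ via a boundedness estimate for the trace. For $u \in C^1_X(\overline{\Omega})$ the natural candidate is the pointwise restriction $Tu := u|_{\partial\Omega}$, and the identity is just the horizontal divergence theorem: by the product rule $\mbox{div}_X(u\,g) = u\,\mbox{div}_X(g) + g\cdot Xu$, the two bulk terms on the left combine into $\int_\Omega \mbox{div}_X(u\,g)\,d\mathcal{L}^n$, and the Gauss--Green formula for the horizontal perimeter recalled above (applied with $E=\Omega$, so that $X\chi_\Omega = \nu_\Omega\,|\partial\Omega|_X$) identifies this bulk term with the boundary integral on the right. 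Thus everything reduces to (i) approximating a general $u \in BV_X(\Omega)$ by $C^1_X$ functions whose restrictions converge in $L^1(\partial\Omega,|\partial\Omega|_X)$, and (ii) a quantitative trace estimate bounding $\|Tu\|_{L^1(\partial\Omega,|\partial\Omega|_X)}$ by $\|u\|_{BV_X(\Omega)}$.

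For the approximation I would exploit the $X$-Lipschitz structure of the boundary. As $\partial\Omega$ is compact, cover it by finitely many charts $U_i$ in each of which $\partial\Omega$ is the graph of an $X$-Lipschitz function and carries a transversal horizontal field $V_i$ pointing into $\Omega$. Using the flow $\Phi^i_s$ of $V_i$ one pushes a boundary collar slightly inward, sets $u_s = u\circ\Phi^i_s$ on $U_i\cap\Omega$, glues these via a partition of unity subordinate to $\{U_i\}$, and mollifies at a scale much finer than $s$. This produces $u_s \in C^1_X$ with $u_s \to u$ in $L^1(\Omega)$ and, by lower semicontinuity together with a standard reverse estimate, $|Xu_s|(\Omega)\to|Xu|(\Omega)$, i.e. \emph{strict} convergence; the inward push is what guarantees that the approximation does not shed boundary mass.

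The heart of the argument is the local trace estimate. Flattening a boundary chart so that $\partial\Omega$ becomes $\{t=0\}$ and the transversal field becomes $\partial_t$, one slices $\Omega\cap U_i$ into the integral curves of $V_i$; along $|\partial\Omega|_X$-almost every such curve the map $t\mapsto u$ has bounded variation in one variable, so its one-sided limit at the boundary exists and the fundamental theorem of calculus yields
\begin{equation*}
\int_{\partial\Omega\cap U_i} |Tu|\, d|\partial\Omega|_X \;\leq\; C\Big( \int_{\Omega\cap U_i'} |u|\, d\mathcal{L}^n + |Xu|(\Omega\cap U_i') \Big).
\end{equation*}
Summing over the finite cover shows that $T$ is bounded, that $u_s|_{\partial\Omega}$ is Cauchy in $L^1(\partial\Omega,|\partial\Omega|_X)$, and that its limit $Tu$ is independent of the approximating sequence. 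One then passes to the limit in the identity for the smooth approximants: $\int u_s\,\mbox{div}_X g\to\int u\,\mbox{div}_X g$ by $L^1$ convergence, $\int g\, dXu_s \to \int g\,dXu$ by the strict (hence Reshetnyak) convergence of the measures tested against the bounded continuous field $g$, and the boundary term converges because $g\cdot\nu_\Omega$ is bounded and $u_s|_{\partial\Omega}\to Tu$ in $L^1(|\partial\Omega|_X)$.

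I expect the main obstacle to be the local trace estimate together with the preservation of boundary mass under the approximation --- precisely the two places where the $X$-Lipschitz hypothesis is indispensable. Away from the Euclidean setting the horizontal normal $\nu_\Omega$ can fail to exist on a characteristic set and the transversal field $V_i$ may degenerate, so guaranteeing that $\partial\Omega$ is non-characteristic $|\partial\Omega|_X$-almost everywhere and that the slicing by integral curves of $V_i$ is compatible with the horizontal perimeter measure is where the geometric assumptions on the domain do the real work. Once this estimate is in hand, the remaining steps are a routine adaptation of the Euclidean trace theory (cf. \cite[Lemma 5.2]{Anz}).
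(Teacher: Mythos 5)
First, a framing point: the paper does not prove this statement at all --- it is imported as background from Vittone \cite[Theorem 1.4]{Vitt}, so there is no internal proof to compare your attempt against; it has to be measured against the proof in that reference. Your outline does follow the same broad lines as the argument there (local graph structure of an $X$-Lipschitz boundary along a transversal horizontal direction, slicing along its integral curves, a local $L^1$ trace estimate, then Gauss--Green on smooth approximants and passage to the limit), so the strategy is the right one, and your opening reduction --- product rule plus the polar decomposition $X\chi_\Omega=\nu_\Omega|X\chi_\Omega|$ for smooth $u$ --- is sound.

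As a proof, however, the proposal has genuine gaps at precisely the two CC-specific steps, which you flag as "obstacles" but do not close. (i) The inward-push approximation: in a Carnot--Carath\'eodory space the flow $\Phi^i_s$ of a horizontal field $V_i$ does not preserve the horizontal bundle. Computing $X_j(u\circ\Phi^i_s)$ produces the pushed-forward field acting on $u$, and this pushforward is $X_j + s\,[X_j,V_i] + O(s^2)$; the commutator term lies in a higher stratum (that is the whole point of bracket-generating families), and for $u\in BV_X(\Omega)$ such derivatives are not measures. Hence $|X(u\circ\Phi^i_s)|(\Omega)$ is \emph{not} controlled by $|Xu|(\Omega)$, and the "standard reverse estimate" you invoke to get strict convergence is exactly what fails outside the Euclidean setting; the same difficulty afflicts Euclidean mollification, which does not commute with the $X_j$ and requires a Friedrichs-commutator argument. (ii) The local trace estimate: slicing $\Omega\cap U_i$ along integral curves of $V_i$ does give one-dimensional BV sections, but to sum the one-dimensional estimates into a bound against $\|u\|_{BV_X}$ and $|\partial\Omega|_X$ you need the transverse measure of that foliation to be comparable, on $\partial\Omega$, with the $X$-perimeter measure of an $X$-Lipschitz graph. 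That comparability (an area-type formula for the perimeter of intrinsic Lipschitz graphs, tied to the precise definition of $X$-Lipschitz domain) is the actual analytic content of the theorem, and your proposal assumes it ("compatible with the horizontal perimeter measure") rather than proving it. In short: correct skeleton, but the two steps you defer are the theorem.
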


The trace operator introduced in the above theorem enables us to consider the restrictions of the horizontal gradient $Xu$ to subsets of codimension one, such as the boundary of the domain $\Omega$. Given an $X$-Lipschitz domain $\Omega$ with compact boundary, denote by $T^+$ the trace operator $T^+: BV_X(\Omega) \rightarrow L^1(\partial\Omega, |\partial\Omega|_X)$ and by $T^-$ the trace operator $T^-: BV_X(\mathbb{R}^n \backslash \overline{\Omega}) \rightarrow L^1(\partial\Omega, |\partial\Omega|_X)$. The following result was shown in \cite[Theorem 5.3]{Vitt}.

\begin{theorem}\label{thm:vittonesplittingthegradient}
Let $\Omega \subset \mathbb{R}^n$ be an $X$-Lipschitz domain with compact boundary. Suppose that $u \in L^1(\mathbb{R}^n)$ is such that $u|_{\Omega} \in BV_X(\Omega)$ and $u|_{\mathbb{R}^n \backslash \overline{\Omega}} \in BV_X(\mathbb{R}^n \backslash \overline{\Omega})$. Then $u \in BV_X(\mathbb{R}^n)$ and
$$ Xu = Xu \res \Omega + Xu \res (\mathbb{R}^n \backslash \overline{\Omega}) + (T^+ u - T^- u) \nu_\Omega |\partial\Omega|_X.$$
\end{theorem}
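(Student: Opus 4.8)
The plan is to recover the claimed decomposition of $Xu$ by testing against an arbitrary vector field $g \in C_c^1(\mathbb{R}^n; \mathbb{R}^m)$ and applying the trace formula of Theorem \ref{thm:vittonetraces} separately on the two sides of the interface $\partial\Omega$. Since $\partial\Omega$ is $\mathcal{L}^n$-negligible (an $X$-Lipschitz domain has locally finite $X$-perimeter, so its topological boundary carries no Lebesgue mass), I would begin by splitting
$$ \int_{\mathbb{R}^n} u \, \mbox{div}_X g \, d\mathcal{L}^n = \int_\Omega u \, \mbox{div}_X g \, d\mathcal{L}^n + \int_{\mathbb{R}^n \setminus \overline{\Omega}} u \, \mbox{div}_X g \, d\mathcal{L}^n.$$
Because $C_c^1(\mathbb{R}^n; \mathbb{R}^m) \subset C^1(\mathbb{R}^n; \mathbb{R}^m)$, the trace formula applies verbatim to each restriction $u|_\Omega \in BV_X(\Omega)$ and $u|_{\mathbb{R}^n \setminus \overline{\Omega}} \in BV_X(\mathbb{R}^n \setminus \overline{\Omega})$, and compact support of $g$ keeps the integral over the unbounded exterior region well-defined.

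Next, I would apply Theorem \ref{thm:vittonetraces} on $\Omega$ to rewrite the first integral as $-\int_\Omega g \, d Xu + \int_{\partial\Omega} g \cdot \nu_\Omega \, T^+ u \, d|\partial\Omega|_X$. The delicate point is the second integral: the exterior region $\mathbb{R}^n \setminus \overline{\Omega}$ is again an $X$-Lipschitz domain sharing the same compact boundary $\partial\Omega$, so Theorem \ref{thm:vittonetraces} is available there as well, but its horizontal inner normal is the outward normal of $\Omega$, namely $-\nu_\Omega$, and the associated trace is $T^- u$. This produces $-\int_{\mathbb{R}^n \setminus \overline{\Omega}} g \, d Xu - \int_{\partial\Omega} g \cdot \nu_\Omega \, T^- u \, d|\partial\Omega|_X$. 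Adding the two identities and collecting the boundary contributions gives, for every $g \in C_c^1(\mathbb{R}^n; \mathbb{R}^m)$,
$$ \int_{\mathbb{R}^n} u \, \mbox{div}_X g \, d\mathcal{L}^n = - \int_{\mathbb{R}^n} g \, d\big( Xu \res \Omega + Xu \res (\mathbb{R}^n \setminus \overline{\Omega}) + (T^+u - T^-u)\nu_\Omega |\partial\Omega|_X \big),$$
which is precisely the asserted formula read as an identity of distributions.

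Finally, to conclude $u \in BV_X(\mathbb{R}^n)$ I would argue that the right-hand side is a finite vectorial Radon measure, so the last characterisation in the definition of $BV_X$ applies. The restrictions $Xu \res \Omega$ and $Xu \res (\mathbb{R}^n \setminus \overline{\Omega})$ have finite total variation by hypothesis, and the jump term is finite because $T^+u, T^-u \in L^1(\partial\Omega, |\partial\Omega|_X)$ by Theorem \ref{thm:vittonetraces} while $|\partial\Omega|_X(\partial\Omega) < \infty$, as $\partial\Omega$ is compact and $\Omega$ is $X$-Lipschitz. Taking the supremum over $g$ with $|g|_\infty \le 1$ then yields $|Xu|(\mathbb{R}^n) \le |Xu|(\Omega) + |Xu|(\mathbb{R}^n \setminus \overline{\Omega}) + \int_{\partial\Omega} |T^+u - T^-u| \, d|\partial\Omega|_X < \infty$.

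The step I expect to be the main obstacle is the bookkeeping of the normal orientation on the exterior domain: one must verify carefully that the horizontal inner normal of $\mathbb{R}^n \setminus \overline{\Omega}$ equals $-\nu_\Omega$ and that the trace operator furnished by Theorem \ref{thm:vittonetraces} for the exterior domain coincides with $T^-$ in the sense used here, since a sign slip there would corrupt the jump term. A secondary technical point is confirming that an $X$-Lipschitz domain with compact boundary has an exterior that is itself admissible for Theorem \ref{thm:vittonetraces}, and that $\partial\Omega$ is genuinely $\mathcal{L}^n$-null so that the initial splitting of the integral is lossless.
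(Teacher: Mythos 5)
There is no internal proof to compare against: the paper quotes this statement verbatim from \cite[Theorem 5.3]{Vitt} without proving it. Your strategy --- test against $g \in C_c^1(\mathbb{R}^n;\mathbb{R}^m)$, apply the trace formula of Theorem \ref{thm:vittonetraces} on $\Omega$ and on the exterior domain separately (noting the exterior's inner normal is $-\nu_\Omega$ and its perimeter measure coincides with $|\partial\Omega|_X$), and add --- is the natural route and is, in outline, how such splitting results are proved.

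However, there is a genuine gap, and it sits exactly where you predicted trouble: the sign bookkeeping does not close. Your two identities read $\int_\Omega u \, \mbox{div}_X g \, d\mathcal{L}^n = -\int_\Omega g \, dXu + \int_{\partial\Omega} g\cdot\nu_\Omega \, T^+u \, d|\partial\Omega|_X$ and $\int_{\mathbb{R}^n\setminus\overline\Omega} u \, \mbox{div}_X g \, d\mathcal{L}^n = -\int_{\mathbb{R}^n\setminus\overline\Omega} g \, dXu - \int_{\partial\Omega} g\cdot\nu_\Omega \, T^-u \, d|\partial\Omega|_X$; their sum carries the boundary term $+\int_{\partial\Omega} g\cdot\nu_\Omega (T^+u - T^-u) \, d|\partial\Omega|_X$, whereas your claimed final display needs it with a minus sign, because the convention forced by the definition of $X$-variation is $\int_{\mathbb{R}^n} u \, \mbox{div}_X g \, d\mathcal{L}^n = -\int_{\mathbb{R}^n} g \cdot dXu$ for $g \in C_c^1$. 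So the chain of identities you wrote actually proves $Xu = Xu \res \Omega + Xu \res (\mathbb{R}^n\setminus\overline\Omega) - (T^+u - T^-u)\nu_\Omega|\partial\Omega|_X$, which is the wrong sign: in the one-dimensional Euclidean example $\Omega=(0,1)$, $u=\chi_{(0,1)}$, one has $Xu = \delta_0 - \delta_1$, $T^+u - T^-u = 1$, $\nu_\Omega(0)=+1$, $\nu_\Omega(1)=-1$, so only the plus sign of the theorem is consistent. The root cause is not entirely yours: the version of Theorem \ref{thm:vittonetraces} reproduced in the paper is itself sign-inconsistent with the inner-normal convention used there (put $u\equiv 1$ into it and compare with the Gauss--Green identity $\int_E \mbox{div}_X g \, d\mathcal{L}^n = -\int g\cdot\nu_E \, d|\partial\Omega|_X$ stated a few lines earlier; they contradict each other). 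With the corrected trace formula, $\int_\Omega u \, \mbox{div}_X g \, d\mathcal{L}^n + \int_\Omega g \, dXu = -\int_{\partial\Omega} g\cdot\nu_\Omega \, Tu \, d|\partial\Omega|_X$ for the inner normal, your argument closes and yields exactly the claimed decomposition; but as written, the addition step is a non sequitur, and a sign must not be flipped silently. A secondary flaw: the inference ``locally finite $X$-perimeter, hence the topological boundary carries no Lebesgue mass'' is false in general (finite-perimeter sets can have topologically fat boundaries); $\mathcal{L}^n(\partial\Omega)=0$ for an $X$-Lipschitz domain should instead be deduced from the local intrinsic-graph structure of the boundary, and it is also needed to identify $X\chi_{\mathbb{R}^n\setminus\overline\Omega} = -X\chi_\Omega$, which underlies your orientation claim.
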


The next theorem proved in \cite[Theorem 1.5]{Vitt} concerns extensions of functions in $L^1(\partial\Omega, |\partial\Omega|_X)$ to $BV_X(\Omega)$. In particular,  $L^1(\partial\Omega, |\partial\Omega|_X)$ is precisely the trace space of $BV_X(\Omega)$.

\begin{theorem}\label{thm:vittoneextension}
Let $\Omega \subset \mathbb{R}^n$ be a $X$-Lipschitz domain with compact boundary. Then, there exists $C(\Omega)$ with the following property: for any $h \in L^1(\partial\Omega, |\partial\Omega|_X)$ and $\varepsilon > 0$, there exists $u \in C^\infty(\Omega) \cap W^{1,1}_X(\Omega)$ such that
\begin{equation}
Tu = h, \quad \int_\Omega |u| \, d\mathcal{L}^n \leq \varepsilon \quad \mbox{and} \quad \int_\Omega |Xu| \, d\mathcal{L}^n \leq C(\Omega) \| h \|_{L^1(\partial\Omega, |\partial\Omega|)}.
\end{equation}
If $\partial\Omega$ is $X$-regular, then $u$ can be chosen in such a way that
\begin{equation}
\int_\Omega |Xu| \, d\mathcal{L}^n \leq (1 + \varepsilon) \| h \|_{L^1(\partial\Omega, |\partial\Omega|_X)}.     
\end{equation}
Furthermore, as we can see from the proof of \cite[Theorem 1.5]{Vitt}, we may require two more things: firstly, we can ensure that the support of $u$ lies in an arbitrarily small neighbourhood of $\partial\Omega$ and require that
\begin{equation}
u(x) = 0 \quad \mbox{if} \quad \mbox{dist}(x,\partial\Omega) > \varepsilon.
\end{equation}
Moreover, if additionally $h \in L^\infty(\partial\Omega,|\partial\Omega|_X)$, then we may require that
$$ \| u \|_{L^\infty(\Omega)} \leq \| h \|_{L^\infty(\partial\Omega,|\partial\Omega|_X)}.$$
\end{theorem}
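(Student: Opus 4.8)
The plan is to build the extension by a localization-plus-cutoff construction, reducing the global statement to a model situation near each boundary piece. First I would use the compactness of $\partial\Omega$ together with the $X$-Lipschitz structure to cover $\partial\Omega$ by finitely many open sets $U_1,\dots,U_N$ in each of which the boundary is, up to a controlled change of variables adapted to the horizontal fields, the graph of an $X$-Lipschitz function, so that points near $\partial\Omega \cap U_i$ can be parametrized as $(z,s)$ with $z$ on the boundary and $s$ a distinguished transversal coordinate. Fixing a partition of unity $\{\eta_i\}$ subordinate to this cover, I would set $h_i = \eta_i h$, construct a local extension $u_i$ of $h_i$, and take the global extension to be $u = \sum_i u_i$.

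For the local construction, the key idea is to concentrate the extension in a thin strip of width $\delta$ near the boundary. Given a smooth profile $\phi:[0,\infty)\to[0,1]$ with $\phi(0)=1$, $\phi \equiv 0$ for $s \geq 1$, and total variation equal to $1$, I would first mollify $h_i$ along the boundary to obtain a smooth $\tilde h_i$ with $\tilde h_i \to h_i$ in $L^1(\partial\Omega,|\partial\Omega|_X)$, and then define $u_i(z,s) = \tilde h_i(z)\,\phi(s/\delta)$. This immediately yields the trace $Tu = h$ (since $\phi(0)=1$ and $\tilde h_i \to h_i$), the smoothness $u \in C^\infty(\Omega)$, and, because $u$ is supported in the $\delta$-strip, the bound $\int_\Omega |u|\,d\mathcal{L}^n \lesssim \delta$, which can be made smaller than any prescribed $\varepsilon$. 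The support condition $u(x)=0$ for $\dist(x,\partial\Omega)>\varepsilon$ follows by choosing $\delta<\varepsilon$, and the bound $\|u\|_{L^\infty(\Omega)}\le\|h\|_{L^\infty}$ is immediate from $|\phi|\le 1$ together with the fact that mollification does not increase the sup norm.

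The estimate on the $X$-variation is where the real work lies. Computing $Xu_i$ splits into a transversal part, coming from differentiating $\phi(s/\delta)$, and a tangential part, coming from the horizontal derivatives of $\tilde h_i$. The transversal part integrates to roughly $\|\tilde h_i\|_{L^1}$ times the total variation of $\phi$, while the tangential part carries an extra factor of $\delta$ and hence becomes negligible once $\delta$ is taken small after fixing the mollification scale. Summing over $i$ produces $\int_\Omega|Xu|\,d\mathcal{L}^n \le C(\Omega)\|h\|_{L^1(\partial\Omega,|\partial\Omega|_X)}$, where $C(\Omega)$ absorbs the overlaps of the partition of unity and the Jacobians of the straightening maps.

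I expect the main obstacle to be the refinement to the sharp constant $(1+\varepsilon)$ under the hypothesis that $\partial\Omega$ is $X$-regular. Here one cannot afford the crude losses coming from the change of variables and the partition of unity: one must align the transversal coordinate $s$ with the horizontal inner normal $\nu_\Omega$ so that $|Xu_i|$ is, up to lower-order terms, exactly $|\tilde h_i|\,|\phi'(s/\delta)|/\delta$ weighted by the correct density, and then invoke a coarea-type disintegration along $s$ to reconstruct precisely $\|h\|_{L^1(\partial\Omega,|\partial\Omega|_X)}$ in the limit $\delta \to 0$. The $X$-regularity of $\partial\Omega$ is exactly what guarantees that the horizontal normal is nondegenerate and that this disintegration is valid with density converging to $1$, so that the profile's total variation, which we took equal to $1$, controls the entire $X$-variation with constant arbitrarily close to $1$.
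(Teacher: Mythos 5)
First, a point of reference: the paper does not prove this theorem at all. It is quoted from Vittone's extension theorem \cite[Theorem 1.5]{Vitt}, and the supplementary properties (support in a small neighbourhood of $\partial\Omega$, the $L^\infty$ bound) are obtained by inspecting Vittone's proof rather than by a new argument. So your attempt must be measured against that proof, which is a Gagliardo-type construction, and in spirit you are on the right track --- but there is a genuine gap at the core of your construction.

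The gap is in the trace. If you mollify $h_i$ at a \emph{fixed} scale to obtain a smooth $\tilde h_i$ and set $u_i(z,s)=\tilde h_i(z)\,\phi(s/\delta)$, then the trace of $u_i$ is $\tilde h_i$, not $h_i$. The observation that $\tilde h_i \to h_i$ in $L^1(\partial\Omega,|\partial\Omega|_X)$ as the mollification scale shrinks does not rescue this: the corresponding functions $u_i$ (built with smaller scales and thinner strips) converge to $0$ in $L^1(\Omega)$, and the trace operator is not continuous under that convergence, so no limiting function with trace $h$ is ever produced. This is precisely the difficulty that Gagliardo's classical argument (and Vittone's adaptation of it to $X$-Lipschitz boundaries) is designed to overcome: the regularization scale must depend on the transversal coordinate, i.e. one takes $u(z,s)=(h*\rho_{\epsilon(s)})(z)\,\psi(s)$ with $\epsilon(s)\to 0$ as $s\to 0^+$, or equivalently one extends the dyadic differences $\tilde h_{k+1}-\tilde h_k$ in nested layers and sums the telescoping series. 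Once this repair is made, your estimate of the tangential part collapses as well: the tangential derivative of the mollification at scale $\epsilon(s)$ is of size $\|h\|_{L^1}/\epsilon(s)$, so its contribution is $\int_0 \|h\|_{L^1}/\epsilon(s)\,ds$, which is finite and small only for a choice of $\epsilon(\cdot)$ carefully interlocked with the profile --- it is not a harmless ``extra factor of $\delta$''. The sharp $(1+\varepsilon)$ refinement for $X$-regular boundaries, which you correctly identify as delicate, cannot even be attempted before this is fixed. A secondary issue specific to this setting: ``mollification along the boundary'' and the transversal coordinate $s$ must be taken with respect to the intrinsic graph structure and the flow of a horizontal vector field transversal to the $X$-Lipschitz boundary (as in \cite{Vitt}); Euclidean tangential smoothing has no reason to produce controlled horizontal derivatives $X u$.
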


We turn our focus to approximations of functions in $BV_X(\Omega)$ by smooth functions.

\begin{definition}
We say that $u_k \in BV_X(\Omega)$ converges $X$-strictly to $u \in BV_X(\Omega)$, if
$$ u_k \rightarrow u \quad \mbox{in } L^1(\Omega) \qquad \mbox{and} \qquad \int_\Omega |Xu_k| \rightarrow \int_\Omega |Xu|.$$
\end{definition}

As in the Euclidean case, $X$-strict convergence is a natural requirement for working with approximations of $BV$ functions. Usually, the norm convergence is too strong a requirement to ask; on the other hand, weak* convergence does not entail convergence of traces of the approximating sequence to the trace of the limit. The next two results, proved in \cite[Corollary 5.5]{Vitt} and \cite[Theorem 5.6]{Vitt} respectively, are generalizations of well-known results in the Euclidean case.

\begin{proposition}\label{prop:vittoneapproximation}
Let $\Omega \subset \mathbb{R}^n$ be an $X$-Lipschitz domain with compact boundary and $u \in BV_X(\Omega)$. Then, there exists a sequence $u_k \in C^\infty(\Omega) \cap C^0(\overline{\Omega}) \cap BV_X(\Omega)$ which converges $X$-strictly to $u$. Furthermore, as we can see from the proof of \cite[Corollary 5.5]{Vitt}, if additionally $u \in L^\infty(\Omega)$, then we may require that
$$ \| u_k \|_{L^\infty(\Omega)} \leq \| u \|_{L^\infty(\Omega)}.$$
\end{proposition}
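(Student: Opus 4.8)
The plan is to prove this as the Carnot--Carath\'eodory counterpart of the classical Anzelotti--Giaquinta smooth approximation lemma \cite[Lemma 5.2]{Anz}; the whole point will be to run the usual partition of unity plus mollification scheme in a way that is compatible with the horizontal structure, and the group structure of $\mathbb{G}$ is exactly what makes this possible. First I would use that $\partial\Omega$ is $X$-Lipschitz and compact to cover a neighbourhood of $\partial\Omega$ by finitely many bounded open sets $U_1,\dots,U_N$ in each of which, after a change of variables, $\Omega$ is the epigraph of a Lipschitz function; I add an interior patch $U_0\subset\subset\Omega$ so that $\{U_i\}_{i=0}^N$ covers $\overline\Omega$, and fix a subordinate partition of unity $\{\varphi_i\}$ with $\varphi_i\ge0$ and $\sum_i\varphi_i\equiv1$.

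The key device is a single \emph{off-centre group mollification}. For each patch I would choose a horizontal group element $g_i$ (with $g_0=e$) so that, for small $s>0$, left translation by $\delta_s g_i$ pushes $U_i\cap\overline\Omega$ a definite distance into $\Omega$; this uses the epigraph structure together with the symmetry of balls in the box distance in the horizontal directions. Picking smooth probability kernels $\Theta^{(i)}_k\ge0$, $\int_{\mathbb{G}}\Theta^{(i)}_k=1$, supported in a ball of radius $\eta_k\ll s_k$ about $\delta_{s_k}g_i$ with $s_k\to0$, I set
\begin{equation*}
u_k(x)=\sum_{i=0}^N \varphi_i(x)\int_{\mathbb{G}} u(h\cdot x)\,\Theta^{(i)}_k(h)\,dh .
\end{equation*}
Because each inner integral samples $u$ only at the left translates $h\cdot x$, which for $x\in\overline\Omega$ lie strictly inside $\Omega$, the function $u_k$ is defined and $C^\infty$ on a neighbourhood of $\overline\Omega$, in particular $u_k\in C^\infty(\Omega)\cap C^0(\overline\Omega)$; and $u_k\to u$ in $L^1(\Omega)$ because the supports of the kernels shrink to $\{e\}$.

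The heart of the matter is the $X$-strict convergence $\int_\Omega|Xu_k|\to\int_\Omega|Xu|$, and \emph{this is where I expect the main obstacle and where the Carnot structure is indispensable}. Lower semicontinuity of the $X$-variation already gives $\liminf_k\int_\Omega|Xu_k|\ge|Xu|(\Omega)$, so the work is the matching upper bound. The decisive point is that, since the fields $X_j$ are \emph{left invariant}, averaging over left translates commutes exactly with them: with $M^{(i)}_k v(x)=\int_{\mathbb{G}} v(h\cdot x)\,\Theta^{(i)}_k(h)\,dh$ one has $X(M^{(i)}_k v)=M^{(i)}_k(Xv)$. Hence $Xu_k=\sum_i\varphi_i\,M^{(i)}_k(Xu)+\sum_i(X\varphi_i)\,M^{(i)}_k u$, where the first group of terms is the mollified gradient and the second is a commutator. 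For the first, Jensen's inequality and left invariance of $\mathcal{L}^n$ give $\int_\Omega|\sum_i\varphi_i M^{(i)}_k(Xu)|\le\sum_i\int_\Omega\varphi_i\,M^{(i)}_k|Xu|\to\sum_i\int_\Omega\varphi_i\,d|Xu|=|Xu|(\Omega)$ as $s_k\to0$. For the second, since every $M^{(i)}_k u\to u$ in $L^1$ and $\sum_i X\varphi_i\equiv0$, these terms converge to $u\sum_i X\varphi_i=0$ and disappear in the limit. Combining the two estimates gives $\limsup_k\int_\Omega|Xu_k|\le|Xu|(\Omega)$, hence $X$-strict convergence; in particular $Tu_k\to Tu$ in $L^1(\partial\Omega,|\partial\Omega|_X)$, which is the property needed for the Anzelotti pairings.

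Finally, the $L^\infty$ refinement is immediate from the same construction: for each $x$ the inner integral $\int_{\mathbb{G}} u(h\cdot x)\,\Theta^{(i)}_k(h)\,dh$ is an average of values of $u$ against a probability density, so it is bounded in modulus by $\|u\|_{L^\infty(\Omega)}$, and $u_k(x)$ is the convex combination of these averages with weights $\varphi_i(x)$. Hence $\|u_k\|_{L^\infty(\Omega)}\le\|u\|_{L^\infty(\Omega)}$ whenever $u\in L^\infty(\Omega)$, as claimed.
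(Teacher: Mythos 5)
Your argument breaks down at exactly the step you call the key device, and the failure is a left/right mismatch that is intrinsic to noncommutative Carnot groups. The exact commutation $X_j(M^{(i)}_k v)=M^{(i)}_k(X_j v)$ is correct, but only because you average over \emph{left} translates $x\mapsto v(h\cdot x)$: the flow of a left-invariant field is \emph{right} multiplication, which commutes with left translations. The price is that your inward-pushing claim must then hold for \emph{left} translations, and for $X$-Lipschitz (intrinsic Lipschitz) boundary patches it is false. Concretely, in $\mathbb{H}^1$ with the group law of Example \ref{ex:heisenberggroup}, write points as $p=(0,w_2,w_3)\circ(v,0,0)$, so that a boundary patch transversal to $X_1$ is $\{v=\phi(w_2,w_3)\}$ and $\Omega$ is locally the epigraph $\{v>\phi(w_2,w_3)\}$. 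A direct computation gives
\begin{equation}
(s,0,0)\circ(0,w_2,w_3)\circ(v,0,0)=(0,\,w_2,\,w_3+s\,w_2)\circ(v+s,0,0),
\end{equation}
so left translation by $(s,0,0)$ raises the height only by $s$ while displacing the base point by $s\,w_2$ in the vertical coordinate $w_3$. An $X$-Lipschitz graph is in general only $\tfrac12$-H\"older with respect to such vertical displacements (this is forced by the homogeneity of $\delta_\lambda$), so the boundary height over the displaced base can rise by $\sim L\sqrt{s\,|w_2|}$, which beats the gain $s$ as soon as $|w_2|\gtrsim s/L^2$. Hence, on a fixed patch and with $s_k\to 0$, the translated points $h\cdot x$ need not lie in $\Omega$ at all, so your $u_k$ is not even well defined near $\partial\Omega$; and no choice of horizontal $g_i$ helps, because the parasitic vertical displacement proportional to $s\times(\text{horizontal coordinates of }x)$ is forced by the group law. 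Shrinking the patches with $k$ does not obviously rescue this either, since then the cutoff gradients $X\varphi_i$ blow up like $s_k^{-1}$ and your commutator terms are no longer negligible.

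The deeper point is that the translations which \emph{do} push an $X$-Lipschitz epigraph into itself are the right translations $x\mapsto x\circ\exp(tX_j)$ (the flow of the defining horizontal field: in the model above they increase $v$ without moving the base), but mollification over right translates commutes with right-invariant fields, not with the $X_j$. So you cannot have both the inward push and exact commutation; one must give up exact commutation and instead control a commutator, via a Friedrichs-type lemma $\|X_j(\rho_\varepsilon\ast v)-\rho_\varepsilon\ast(X_j v)\|_{L^1}\to 0$ for ordinary mollifiers. That is the standard route (Franchi--Serapioni--Serra Cassano, Garofalo--Nhieu) underlying Vittone's result, and note that it requires no group structure at all --- which matters, because the paper does not prove this proposition but quotes it from \cite[Corollary 5.5]{Vitt}, stated for general Carnot--Carath\'eodory vector fields; your argument, even if repaired, would cover only Carnot groups, whereas the proposition and the pairing results of Section \ref{sec:3lgpcarnot} that rest on it are formulated for general $X$-Lipschitz domains.
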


\begin{theorem}\label{thm:vittonecontinuity}
Let $\Omega \subset \mathbb{R}^n$ be an $X$-Lipschitz domain with compact boundary. Suppose that the sequence $u_k \in BV_X(\Omega)$ converges $X$-strictly to $u \in BV_X(\Omega)$. Then $Tu_k \rightarrow Tu$ in $L^1(\partial\Omega, |\partial\Omega|_X)$.
\end{theorem}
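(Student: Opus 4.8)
The plan is to transplant the classical Euclidean continuity of traces under strict convergence to the horizontal setting; the one genuinely new ingredient is a boundary-localised trace inequality expressed through the $X$-divergence. Since $T$ is linear and bounded by Theorem \ref{thm:vittonetraces}, it is tempting to apply it to $v_k := u_k - u$ directly, but this fails: $X$-strict convergence controls $|Xu_k|(\Omega)\to|Xu|(\Omega)$ and $u_k\to u$ in $L^1$, yet it gives no control on $|Xv_k|(\Omega)$, which may remain bounded away from zero. The remedy is to show that the trace only sees $v_k$ in a thin collar of $\partial\Omega$, and that $X$-strict convergence lets us make the $X$-variation in that collar arbitrarily small.

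First I would prove the following localised trace estimate. Writing $\Omega_\rho = \{x\in\Omega : d_{cc}(x,\partial\Omega)<\rho\}$ for the CC-collar of width $\rho$, there is a constant $C=C(\Omega)$ such that
$$\int_{\partial\Omega} |Tw|\, d|\partial\Omega|_X \le \frac{C}{\rho}\int_{\Omega_\rho}|w|\, d\mathcal{L}^n + C\,|Xw|(\Omega_\rho) \qquad \text{for all } w\in BV_X(\Omega).$$
To obtain it I would insert into the integration-by-parts identity of Theorem \ref{thm:vittonetraces} a horizontal test field of the form $g = \sigma\,\tilde\nu\,\chi(d_{cc}(\cdot,\partial\Omega)/\rho)$, where $\chi$ is a cutoff equal to $1$ near $\partial\Omega$ and vanishing outside the collar, $\tilde\nu$ is a unit extension of the horizontal inner normal $\nu_\Omega$, and $\sigma$ is a smoothing of $\mathrm{sgn}(Tw)$. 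Then $|g|\le 1$, $g$ is supported in $\Omega_\rho$, and $g\cdot\nu_\Omega\to\mathrm{sgn}(Tw)$ on $\partial\Omega$, so the boundary term converges to $\int_{\partial\Omega}|Tw|$; on the right-hand side $\int_\Omega w\,\mbox{div}_X g$ is bounded by $\frac{C}{\rho}\int_{\Omega_\rho}|w|$ because $d_{cc}(\cdot,\partial\Omega)$ is $1$-Lipschitz in the CC sense and hence $|\mbox{div}_X g|\lesssim 1/\rho$, while $\int_\Omega g\, dXw$ is bounded by $|Xw|(\Omega_\rho)$ since $|g|\le 1$.

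I would then apply this estimate to $v_k=u_k-u$, using $Tv_k = Tu_k - Tu$. For each fixed $\rho$ the first term tends to $0$ because $v_k\to0$ in $L^1(\Omega)$. For the second term I bound $|Xv_k|(\Omega_\rho)\le|Xu_k|(\Omega_\rho)+|Xu|(\Omega_\rho)$ and use the standard fact that $X$-strict convergence forces $|Xu_k|\rightharpoonup|Xu|$ weakly$^*$ as measures: indeed $u_k\to u$ in $L^1$ together with the uniform mass bound gives $Xu_k\rightharpoonup Xu$ weakly$^*$, and combined with $|Xu_k|(\Omega)\to|Xu|(\Omega)$, lower semicontinuity of the variation upgrades this to weak$^*$ convergence of the variation measures. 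Choosing $\rho$ among the (all but countably many) values for which $|Xu|$ charges no mass on $\{d_{cc}(\cdot,\partial\Omega)=\rho\}$, the portmanteau inequality gives $\limsup_k|Xu_k|(\Omega_\rho)\le|Xu|(\Omega_\rho)$, hence $\limsup_k\|Tv_k\|_{L^1(\partial\Omega)}\le 2C\,|Xu|(\Omega_\rho)$. Finally, letting $\rho\to0$ the collars decrease to the empty set inside $\Omega$, so the finite measure $|Xu|$ satisfies $|Xu|(\Omega_\rho)\to0$, which yields $Tu_k\to Tu$ in $L^1(\partial\Omega,|\partial\Omega|_X)$.

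The main obstacle is the construction of the test field $g$, specifically the unit extension $\tilde\nu$ of the horizontal normal and the resulting bound $|\mbox{div}_X g|\lesssim1/\rho$: here the $X$-Lipschitz regularity of $\partial\Omega$ and the structural results recalled from \cite{Vitt} are essential, because $\nu_\Omega$ is only the horizontal component of the normal and the signed CC-distance to $\partial\Omega$ need not be smooth, so the cutoff and the extension must be regularised with care. A secondary point is the slight non-compactness of the closed collar near $\partial\Omega$ when invoking the portmanteau inequality, which is handled by comparing $\Omega_\rho$ with a marginally wider generic collar.
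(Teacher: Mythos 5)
First, a point of reference: the paper does not prove Theorem \ref{thm:vittonecontinuity} at all --- it is quoted from \cite[Theorem 5.6]{Vitt} --- so your argument can only be assessed on its own merits. Your outer scheme is the standard one and is sound: you correctly observe that boundedness of $T$ applied to $u_k-u$ is not enough, you reduce everything to a localised trace inequality on a collar $\Omega_\rho$, you upgrade $X$-strict convergence to weak$^*$ convergence of the variation measures $|Xu_k|\rightharpoonup |Xu|$ (lower semicontinuity of the $X$-variation is available here), you evaluate on collars whose relative boundary in $\Omega$ is $|Xu|$-null, and you let $\rho\to 0$ using $\bigcap_{\rho>0}\Omega_\rho=\emptyset$. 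All of that is fine.

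The genuine gap is in your proof of the key localised inequality, specifically in the claim that $|\mbox{div}_X g|\lesssim 1/\rho$ for $g=\sigma\,\tilde\nu\,\chi(d_{cc}(\cdot,\partial\Omega)/\rho)$. Computing,
\[
\mbox{div}_X g \;=\; \sigma\,\chi\,\mbox{div}_X\tilde\nu \;+\; \chi\,\tilde\nu\cdot X\sigma \;+\; \frac{\sigma}{\rho}\,\chi'\,\tilde\nu\cdot Xd_{cc},
\]
and only the last term is $O(1/\rho)$. The middle term is fatal: $\sigma$ is a smoothing of $\mathrm{sign}(Tw)$, and $Tw\in L^1(\partial\Omega,|\partial\Omega|_X)$ can change sign on arbitrarily fine scales, so to make the boundary term close to $\int_{\partial\Omega}|Tw|\,d|\partial\Omega|_X$ the smoothing must be fine and $\|X\sigma\|_\infty$ blows up --- at a rate depending on $w$. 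The same problem affects $\tilde\nu$: for an $X$-Lipschitz domain $\nu_\Omega$ is merely a Borel unit vector field, it admits no continuous unit extension, and any $C^1$ regularisation (needed since Theorem \ref{thm:vittonetraces} requires $g\in C^1$) has uncontrolled $\mbox{div}_X\tilde\nu$. Hence what your construction actually yields is the inequality with a constant depending on $w$; but $w$-uniformity is exactly what the rest of your argument needs, since you apply the inequality to $v_k=u_k-u$ and send $k\to\infty$ at fixed $\rho$ --- if the constant multiplying $\int_{\Omega_\rho}|v_k|\,d\mathcal{L}^n$ grows with $k$, the bound $\limsup_k\|Tv_k\|_{L^1}\le 2C\,|Xu|(\Omega_\rho)$ no longer follows. (If instead you keep $g$ fixed and $w$-independent, your computation is correct but only gives $\int_{\partial\Omega}Tv_k\,(g\cdot\nu_\Omega)\,d|\partial\Omega|_X\to 0$ for each such $g$, i.e.\ a weak-type convergence of traces, not convergence in $L^1(\partial\Omega,|\partial\Omega|_X)$.)

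The localised inequality itself is true and can be obtained with $C=C(\Omega)$ from results the paper already quotes, avoiding any normal field: take $\eta_\rho=\phi(d_{cc}(\cdot,\partial\Omega)/\rho)$ with $\phi$ equal to one near $0$ and supported in $[0,1]$; then $T(w\eta_\rho)=Tw$ (locality of the trace: a $BV_X$ function vanishing near $\partial\Omega$ has zero trace, which one extracts from the Gauss--Green formula of Theorem \ref{thm:vittonetraces}), and the boundedness of $T$ in Theorem \ref{thm:vittonetraces} combined with the Leibniz estimate $|X(w\eta_\rho)|\le \eta_\rho|Xw|+|w|\,|X\eta_\rho|\,\mathcal{L}^n$ and the fact that $d_{cc}$-Lipschitz functions have horizontal gradient bounded a.e.\ by their Lipschitz constant gives precisely your estimate with a $w$-independent constant. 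With that lemma in place, the remainder of your proof goes through unchanged.
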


In order to introduce Anzelotti pairings on Carnot groups, we are going to require something more than Proposition \ref{prop:vittoneapproximation} - we need the approximating sequence to preserve the trace of the limit. Fortunately, it is an easy consequence of the results above. For a similar result in the Euclidean case, see \cite[Lemma 5.2]{Anz}.

\begin{proposition}\label{prop:approximationpreservingtraces}
Let $\Omega \subset \mathbb{R}^n$ be an $X$-Lipschitz domain with compact boundary and $u \in BV_X(\Omega)$. Then, there exists a sequence $u_k \in C^\infty(\Omega) \cap BV_X(\Omega)$ which converges $X$-strictly to $u$ and such that $Tu_k = Tu$. If additionally $u \in L^\infty(\Omega)$, then also
$$ \| u_k \|_{L^\infty(\Omega)} \leq 3 \| u \|_{L^\infty(\Omega)}.$$
\end{proposition}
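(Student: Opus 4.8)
The plan is to start from any trace-convergent smooth approximation and then repair the trace exactly by adding a small correction term produced by the extension theorem. Concretely, I would first apply Proposition~\ref{prop:vittoneapproximation} to obtain a sequence $v_k \in C^\infty(\Omega) \cap C^0(\overline{\Omega}) \cap BV_X(\Omega)$ converging $X$-strictly to $u$; if moreover $u \in L^\infty(\Omega)$, I would additionally require $\| v_k \|_{L^\infty(\Omega)} \leq \| u \|_{L^\infty(\Omega)}$. Since $v_k \to u$ $X$-strictly, Theorem~\ref{thm:vittonecontinuity} gives $Tv_k \to Tu$ in $L^1(\partial\Omega, |\partial\Omega|_X)$, so the error $h_k := Tu - Tv_k$ satisfies $\| h_k \|_{L^1(\partial\Omega, |\partial\Omega|_X)} \to 0$. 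The idea is that although $Tv_k \neq Tu$ in general, this error tends to zero and can therefore be absorbed by a correction of vanishing $X$-variation.

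Next I would feed $h_k$ into Theorem~\ref{thm:vittoneextension} with the parameter $\varepsilon = 1/k$: this yields $w_k \in C^\infty(\Omega) \cap W^{1,1}_X(\Omega) \subset BV_X(\Omega)$ with $Tw_k = h_k$, with $\int_\Omega |w_k| \, d\mathcal{L}^n \leq 1/k$, and with $\int_\Omega |Xw_k| \, d\mathcal{L}^n \leq C(\Omega) \| h_k \|_{L^1(\partial\Omega, |\partial\Omega|_X)}$. I then set $u_k := v_k + w_k \in C^\infty(\Omega) \cap BV_X(\Omega)$. By linearity of the trace operator, $Tu_k = Tv_k + Tw_k = Tv_k + h_k = Tu$, which is exactly the trace preservation we want. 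For the convergence, $\| u_k - u \|_{L^1(\Omega)} \leq \| v_k - u \|_{L^1(\Omega)} + 1/k \to 0$; and since $Xu_k = Xv_k + Xw_k$ as measures, the triangle inequality for the total-variation norm gives
\[
\big| |Xu_k|(\Omega) - |Xv_k|(\Omega) \big| \leq |Xw_k|(\Omega) \leq C(\Omega)\, \| h_k \|_{L^1(\partial\Omega, |\partial\Omega|_X)} \to 0 .
\]
As $|Xv_k|(\Omega) \to |Xu|(\Omega)$, we conclude $|Xu_k|(\Omega) \to |Xu|(\Omega)$, i.e. $u_k \to u$ $X$-strictly.

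Finally, for the $L^\infty$ bound I would use the last assertion of Theorem~\ref{thm:vittoneextension}. When $u \in L^\infty(\Omega)$ we have $\| Tu \|_{L^\infty} \leq \| u \|_{L^\infty(\Omega)}$, while $Tv_k$ is the genuine boundary restriction of the continuous function $v_k$, so $\| Tv_k \|_{L^\infty} \leq \| v_k \|_{L^\infty(\Omega)} \leq \| u \|_{L^\infty(\Omega)}$; hence $h_k \in L^\infty(\partial\Omega, |\partial\Omega|_X)$ with $\| h_k \|_{L^\infty} \leq 2 \| u \|_{L^\infty(\Omega)}$, and $w_k$ may be chosen with $\| w_k \|_{L^\infty(\Omega)} \leq \| h_k \|_{L^\infty} \leq 2 \| u \|_{L^\infty(\Omega)}$. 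Therefore $\| u_k \|_{L^\infty(\Omega)} \leq \| v_k \|_{L^\infty(\Omega)} + \| w_k \|_{L^\infty(\Omega)} \leq 3 \| u \|_{L^\infty(\Omega)}$, which is precisely where the constant $3$ comes from. The only genuinely delicate point is the interplay in the second step: one must ensure that the correction $w_k$ does not spoil the $X$-strict convergence, and this hinges on the $X$-variation bound for $w_k$ being controlled by $\| h_k \|_{L^1(\partial\Omega, |\partial\Omega|_X)}$ rather than by a fixed constant, so that it genuinely vanishes as $k \to \infty$. Everything else is a routine combination of Proposition~\ref{prop:vittoneapproximation} and Theorems~\ref{thm:vittonecontinuity}--\ref{thm:vittoneextension}.
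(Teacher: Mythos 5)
Your proposal is correct and follows essentially the same route as the paper's own proof: a Vittone-type smooth approximation (Proposition~\ref{prop:vittoneapproximation}), correction of the trace error $h_k = Tu - Tv_k$ by the extension of Theorem~\ref{thm:vittoneextension} with parameter $1/k$, trace preservation by linearity, and the identical bookkeeping that yields the constant $3$ in the $L^\infty$ bound. If anything, your two-sided estimate $\bigl| |Xu_k|(\Omega) - |Xv_k|(\Omega) \bigr| \leq |Xw_k|(\Omega)$ is slightly more explicit than the paper's one-sided inequality (which implicitly relies on lower semicontinuity of the $X$-variation for the reverse direction), but this is a cosmetic refinement rather than a different argument.
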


\begin{proof}
Take an approximating sequence $u_k \in C^\infty(\Omega) \cap BV_X(\Omega)$ converging $X$-strictly to $u$ given by Proposition \ref{prop:vittoneapproximation}. Denote
$$ h_k = Tu - Tu_k \in L^1(\partial\Omega, |\partial\Omega|_X).$$
Set $\delta = \frac{1}{k}$ and take an extension $v_k \in C^\infty(\Omega) \cap BV_X(\Omega)$ of $h_k$ given by Theorem \ref{thm:vittoneextension}. Set
$$\widetilde{u_k} = u_k + v_k \in C^\infty(\Omega) \cap BV_X(\Omega).$$
Then $\widetilde{u_k}$ is the desired approximating sequence. Firstly, by linearity of the trace operator
$$T\widetilde{u_k} = Tu_k + Tu - Tu_k = Tu.$$
Secondly, as $\int_\Omega |v_k| \, d\mathcal{L}^n \leq \frac1k$, we have
$$ \int_{\Omega} |\widetilde{u_k} - u| \, d\mathcal{L}^n \leq \int_{\Omega} |u_k - u| \, d\mathcal{L}^n + \int_{\Omega} |v_k| \, d\mathcal{L}^n \leq \int_{\Omega} |u_k - u| \, d\mathcal{L}^n + \frac1k \rightarrow 0.$$
Thirdly, as $\int_\Omega |Xv_k| \, d\mathcal{L}^n \leq C(\Omega) \int_{\partial\Omega} |h_k| \, d|\partial\Omega|_X$, we have
$$ \int_{\Omega} |X\widetilde{u_k}| \, d\mathcal{L}^n \leq \int_{\Omega} |Xu_k| \, d\mathcal{L}^n + \int_{\Omega} |Xv_k| \, d\mathcal{L}^n \leq \int_{\Omega} |Xu_k| \, d\mathcal{L}^n + C(\Omega) \int_{\partial\Omega} |h_k| \, d|\partial\Omega|_X = $$
$$ = \int_{\Omega} |Xu_k| \, d\mathcal{L}^n + C(\Omega) \int_{\partial\Omega} |Tu - Tu_k| \, d|\partial\Omega|_X \rightarrow \int_\Omega |Xu|,$$
where the second summand goes to zero by Theorem \ref{thm:vittonecontinuity}. Finally, if $u \in L^\infty(\Omega)$, then
$$ \| \widetilde{u_k} \|_{L^\infty(\Omega)} \leq \| u_k \|_{L^\infty(\Omega)} + \| v_k \|_{L^\infty(\Omega)} \leq \| u \|_{L^\infty(\Omega)} + \| h_k \|_{L^\infty(\partial\Omega, |\partial\Omega|_X)} \leq $$
$$ \leq \| u \|_{L^\infty(\Omega)} + \| Tu \|_{L^\infty(\partial\Omega, |\partial\Omega|_X)} + \| Tu_k \|_{L^\infty(\partial\Omega, |\partial\Omega|_X)} \leq $$
$$ \leq 2 \| u \|_{L^\infty(\Omega)} + \| u_k \|_{L^\infty(\Omega)} \leq 3 \| u \|_{L^\infty(\Omega)}.$$
The $L^\infty$ bound is proved here in a crude way and is clearly suboptimal, but we are not interested in the exact bound, as it is sufficient for the definition of Anzelotti pairings.
\end{proof}

\subsection{Definition of Carnot groups}

\begin{definition}\label{dfn:liealgebrastratified}
We say that a Lie algebra $\mathfrak{g}$ is stratified, if there exist linear subspaces $\mathfrak{g}_1,...,\mathfrak{g}_l$ such that
$$ \mathfrak{g} = \mathfrak{g}_1 \oplus ... \oplus \mathfrak{g}_l$$
and such that the following condition holds:
$$ \mathfrak{g}_{j} = [\mathfrak{g}_1, \mathfrak{g}_{j-1}] \quad \mbox{for } j=2,...,l \qquad \mbox{and } [\mathfrak{g}_1,\mathfrak{g}_l]=\{ 0 \}.$$
We call this decomposition a stratification of $\mathfrak{g}$. We call the elements of $\mathfrak{g}_1$ the horizontal vector fields. 
\end{definition}

\begin{definition}
We say that a Lie group $\mathbb{G}$ is stratified if its Lie algebra is stratified. If the group $\mathbb{G}$ is finite dimensional and stratified, then it is also nilpotent of step $l$.
\end{definition}

\begin{definition}\label{dfn:carnotgroup}
A Carnot group is a finite dimensional, connected, simply connected and stratified Lie group $\mathbb{G}$ (of step $l$). 
\end{definition}

On Carnot groups, the exponential map $\exp: \mathfrak{g} \rightarrow \mathbb{G}$ is a (global) diffeomorphism. This allows for a definition of dilations on the Carnot group and the introduction of exponential coordinates.

\begin{definition}
On a stratified Lie algebra $\mathfrak{g}$, we define a one-parameter group of dilations of the algebra by the formula
$$ \delta_\lambda X = \lambda^j X \qquad \mbox{if } X \in \mathfrak{g}_j$$
and extend it to the whole of $\mathfrak{g}$ by linearity. As the exponential map is a global diffeomorphism, we extend it to the Lie group by the formula
$$ \delta_\lambda(x) = \exp(\delta_\lambda(\exp^{-1}(x)))).$$
\end{definition}

Now, we introduce exponential coordinates. For vector fields $X,Y \in \mathfrak{g}$, we define $C(X,Y)$ by the formula
$$ \exp(C(X,Y)) = \exp(X)\exp(Y).$$
In fact, there is a direct formula for $C(X,Y)$ called the Baker-Campbell-Hausdorff formula; it is formally an infinite series (not necessarily convergent) defined by iterated commutators of $X,Y$. We recall the first few summands in the Baker-Campbell-Hausdorff formula:
$$ C(X,Y) = X + Y + \frac12 [X,Y] + \frac1{12} \bigg([X,[X,Y]] - [Y, [X,Y]] \bigg) + ...$$
The next summands involve iterations of an increasing number of commutators of $X,Y$. Due to the stratified structure of $\mathfrak{g}$ in the case of Carnot groups the BCH formula is a polynomial which converges everywhere (as any iterations of $l+1$ or more commutators are zero). Moreover, we see that it is linear in the horizontal directions (in $\mathfrak{g}_1$).

\begin{definition}\label{dfn:gradedcoordinates}
Let $(X_1, ..., X_n)$ be a basis of the Lie algebra $\mathfrak{g}$ of left invariant vector fields. We say that the basis $(X_1,...,X_n)$ of $\mathfrak{g}$ is adapted to the stratification, if it is ordered in the same way as $\mathfrak{g}_j$; precisely, let $m_j = \mbox{dim}(\mathfrak{g}_j)$ and $n_j = m_1 + ... + m_j$. Then, we require that $X_1,...,X_{n_1}$ is a basis of $\mathfrak{g}_1$ and for $j > 1$ we have that $X_{n_j -1},...,X_{n_j}$ is a basis of $\mathfrak{g}_j$. 
\end{definition}

\begin{definition}
A system of exponential coordinates (of the first kind) relative to a basis $(X_1, ..., X_n)$ of $\mathfrak{g}$ adapted to the stratification is a map from $\mathbb{R}^n$ to $\mathbb{G}$ defined by
$$ x \mapsto \exp \bigg( \sum_{i=1}^n x_j X_j \bigg).$$
We endow $\mathbb{R}^n$ with the group operation pulled back from $\mathbb{G}$, i.e.
$$ x \circ y = z \quad \Leftrightarrow \quad \sum_{i=1} z_j X_j = C(\sum_{i=1} x_j X_j, \sum_{i=1} y_j X_j).$$
We see that $\mathbb{R}^n$ with this group law is a Lie group with a Lie algebra isomorphic to $\mathfrak{g}$. As both $\mathbb{G}$ and $\mathbb{R}^n$ are nilpotent, connected and simply connected, the exponential coordinates define a diffeomorphism between $\mathbb{R}^n$ and $\mathbb{G}$. We endow $\mathbb{R}^n$ with the Carnot-Carath\'eodory distance introduced in Definition \ref{dfn:ccdistance}.
\end{definition}

{\bf Notation.} In this paper, we will exploit the structure of $\mathbb{G}$ in exponential coordinates and use a version of the scalar product on $\mathbb{R}^n$ restricted to horizontal directions. Given two vectors $v \in \mathbb{R}^{n_1}$ and $w \in \mathbb{R}^{n_2}$, where $m \leq n_1, n_2 \leq n$ with $n_1 \neq n_2$, we set
$$ \langle v, w \rangle = \sum_{i=1}^m v_i w_i.$$
The only difference with respect to the usual scalar product is that the dimensions do not coincide. In other words, we extend $v$ and $w$ to $\mathbb{R}^n$ by setting it to zero on the last $n-n_1$ (respectively $n - n_2$) coordinates and then we take the usual scalar product. When the dimensions $n_1, n_2$ coincide, we will denote the scalar product by $v \cdot w$, so that the notation $\langle v, w \rangle$ is used to alert the reader that the dimensions are different. Finally, we denote the group operation by $\circ$ or skip the multiplication symbol when it is clear from the context.

In the next Proposition, we list a few well-known properties of exponential coordinates; we refer for instance to \cite{VittPhD}.

\begin{proposition}\label{prop:propertiesexponentialcoordinates}
(properties of exponential coordinates) \\
1. In this representation, the neutral element of $\mathbb{G}$ is $(0,...,0)$. Moreover, for all $x \in \mathbb{R}^n$ we have $x^{-1} = -x$. \\
2. As the basis $(X_1,...,X_n)$ of $\mathfrak{g}$ is adapted to the stratification, the dilations $\delta_\lambda$ in the exponential coordinates are represented as
$$ \delta_\lambda(x_1,...,x_n) = (\lambda x_1,..., \lambda x_{n_1}, \lambda^2 x_{n_1 + 1}, ..., \lambda^2 x_{n_2},..., \lambda^l x_{n_{l-1}+1}, \lambda^l x_n).$$
3. By the Baker-Campbell-Hausdorff formula, the group operation is linear in the horizontal directions (directions generated from $\mathfrak{g}_1$). \\
4. The length of horizontal curves is preserved by left translation in $\mathbb{G}$. In particular, we have $d_{cc}(z \circ x, z \circ y) = d_{cc}(x,y)$. \\
5. The CC distance is $1$-homogeneous with respect to the dilations $\delta_\lambda$, i.e. $d_{cc}(\delta_\lambda x, \delta_\lambda y) = \lambda d_{cc}(x,y)$. \\
6. In the exponential coordinates the Lebesgue measure $\mathcal{L}^n$ is the Haar measure of $\mathbb{G}$ and is both left- and right-invariant. \\
7. $\mathcal{L}^n$ is Ahlfors regular with respect to $d_{cc}$ with exponent $Q = \sum_{j=1}^l j \, \dim(\mathfrak{g}_j)$, called the homogeneous dimension of $\mathbb{G}$. Precisely, we have
$$ \mathcal{L}^n(B_{cc}(x,r)) = \mathcal{L}^n(B_{cc}(0,r)) = r^Q \mathcal{L}^n(B_{cc}(0,1)).$$
Note that the exponent $Q$ comes from the homogeneity of $d_{cc}$ with respect to dilations and the change of variables formula.
\end{proposition}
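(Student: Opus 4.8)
The plan is to verify each of the seven properties in turn; they all follow directly from the definitions of the exponential coordinates, the dilations, and the Baker--Campbell--Hausdorff (BCH) formula, together with the subunit-path characterization of $d_{cc}$. I would begin with the algebraic statements (1)--(3), which are immediate. For (1), the neutral element corresponds to $\exp(0)$, hence is $(0,\dots,0)$, and the inverse identity $x^{-1}=-x$ follows from $C(X,-X)=0$, since every commutator term in the BCH series contains a bracket that vanishes. For (2), a point $x$ is $\exp(\sum_i x_i X_i)$, and since $\delta_\lambda$ on $\mathbb{G}$ is defined by conjugating the algebra dilation with $\exp$, we get $\delta_\lambda(x)=\exp(\sum_i x_i\,\delta_\lambda X_i)=\exp(\sum_i x_i\,\lambda^{j(i)} X_i)$, where $j(i)$ is the stratum of $X_i$; reading off coordinates gives the stated scaling. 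Property (3) is exactly the observation, already recorded in the text, that the BCH polynomial is linear in the $\mathfrak{g}_1$-directions.

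For the metric statements (4) and (5), I would use the subunit-path definition of $d_{cc}$ together with the behaviour of the horizontal fields $X_1,\dots,X_m$ under the relevant maps. For (4), left translation $L_z$ satisfies $dL_z\big(X_j(p)\big)=X_j(L_z p)$ because the $X_j$ are left-invariant; hence if $\gamma$ is subunit from $x$ to $y$ with controls $h_j$, then $L_z\circ\gamma$ is subunit from $zx$ to $zy$ with the same controls and the same time $T$, giving $d_{cc}(zx,zy)\le d_{cc}(x,y)$, and the reverse inequality follows by translating by $z^{-1}$. For (5), the dilation $\delta_\lambda$ is a group automorphism that pushes a horizontal field forward by a factor $\lambda$, i.e. $d\delta_\lambda\big(X_j(p)\big)=\lambda X_j(\delta_\lambda p)$ for $j\le m$; reparametrizing a subunit path $\gamma:[0,T]\to\mathbb{R}^n$ as $\eta(t)=\delta_\lambda\gamma(t/\lambda)$ on $[0,\lambda T]$ then produces a subunit path from $\delta_\lambda x$ to $\delta_\lambda y$ with the same control bound, so $d_{cc}(\delta_\lambda x,\delta_\lambda y)\le\lambda\, d_{cc}(x,y)$, with equality again obtained by applying $\delta_{1/\lambda}$.

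The measure-theoretic statements (6) and (7) rest on Jacobian computations. For (6), the BCH formula shows that in exponential coordinates the map $x\mapsto z\circ x$ is polynomial and, when the coordinates are ordered by the stratification, its Jacobian matrix is block lower-triangular with identity diagonal blocks (each stratum-$j$ coordinate of $z\circ x$ equals $z_k+x_k$ plus a polynomial in strictly-lower-stratum coordinates), so its Jacobian determinant is $1$; thus $\mathcal{L}^n$ is left-invariant, hence a left Haar measure, and the identical computation for right translation shows it is right-invariant as well, so $\mathbb{G}$ is unimodular. For (7), left-invariance together with property (4) gives $\mathcal{L}^n(B_{cc}(x,r))=\mathcal{L}^n(B_{cc}(0,r))$, since $L_x$ is measure-preserving and maps $B_{cc}(0,r)$ onto $B_{cc}(x,r)$. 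Finally, property (5) yields $B_{cc}(0,r)=\delta_r\big(B_{cc}(0,1)\big)$, and from the coordinate form of $\delta_r$ in (2) the Jacobian determinant of $\delta_r$ equals $\prod_{j=1}^l r^{j\,m_j}=r^{Q}$ with $Q=\sum_j j\,\dim(\mathfrak{g}_j)$; the change of variables formula then gives $\mathcal{L}^n(B_{cc}(0,r))=r^Q\mathcal{L}^n(B_{cc}(0,1))$.

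Since each step is a direct consequence of the definitions, there is no single hard obstacle; the points requiring the most care are the transformation rules for the horizontal fields in (4) and (5) --- that left translation fixes the left-invariant fields while dilation scales the horizontal ones by $\lambda$ --- because these are precisely what make the subunit-path arguments work and what ultimately produce the homogeneity exponent $Q$ in (7).
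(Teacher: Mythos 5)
Your proposal is correct in all seven points: the BCH argument for $C(X,-X)=0$, the conjugation formula for $\delta_\lambda$, the subunit-path arguments using left-invariance of the $X_j$ and the relation $d\delta_\lambda(X_j(p))=\lambda X_j(\delta_\lambda p)$, and the two Jacobian computations (unit determinant for translations, $r^Q$ for dilations) are exactly the standard proofs. Note that the paper itself offers no proof of this Proposition --- it is stated as a list of well-known facts with a citation to the literature --- so your write-up simply supplies, correctly, the arguments that the cited reference contains.
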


In exponential coordinates we have another distance with similar properties as the Carnot-Carath\'eodory distance: the box distance, which due to its simple form is often more convenient to use that the CC distance.

\begin{definition}
In the notation of Definition \ref{dfn:gradedcoordinates}, take the norm on $\mathbb{R}^n$ defined by
\begin{equation}
\| (x_1,...,x_n) \| := \max \bigg\{ |x_1|, ..., |x_{n_1}|, |x_{n_1+1}|^{\frac12}, ..., |x_{n_2}|^{\frac12}, ..., |x_{n_{k-1}}+1|^{\frac1k}, ..., |x_{n_k}|^{\frac1k} \bigg\}.
\end{equation}
For $x,y \in \mathbb{G}$, we define the box distance by the formula
$$d_{box}(x,y) := \| x^{-1} y \|.$$
We will denote balls with respect to the box distance by $U(x,r)$.
\end{definition}

In general, one may consider distances that are left-invariant and $1$-homogenous with respect to the dilations; such distances are for simplicity called homogenous (\cite{LeD}). It is clear that the box distance is homogenous and that balls with respect to homogenous distances have the properties from point $7$ in Proposition \ref{prop:propertiesexponentialcoordinates}. In particular, we have
$$ \mathcal{L}^n(U(x,r)) = \mathcal{L}^n(U(0,r)) = r^Q \mathcal{L}^n(U(0,1)).$$
We choose to work primarily with the box distance due to the fact that the unit ball with respect to this distance is symmetric in the horizontal directions; this will play a key role in the proofs in Section \ref{sec:4nonlocalconvergence}. This is known for the CC distance only in a few cases, such as Heisenberg groups, see \cite{Mon00} or \cite[Corollary 3.2]{HZ}.

As we can see from the Baker-Campbell-Hausdorff formula, in the exponential coordinates the only abelian Carnot group of dimension $n$ is $\mathbb{R}^n$ with the group action being simply addition. A more typical example is the (non-abelian) Heisenberg group.

\begin{example}\label{ex:heisenberggroup}
(Heisenberg group) \\
The Heisenberg group $\mathbb{H}^1$ is the space $\mathbb{R}^3$ equipped with the following vector fields:
$$ X_1 = \partial_1 - \frac12 x_2 \cdot \partial_3, \quad X_2 = \partial_2 + \frac12 x_1 \cdot \partial_3, \quad X_3 = [X_1, X_2] = \partial_3. $$
In particular, $\mathfrak{g}_1 = \mbox{span}(X_1, X_2)$ and $\mathfrak{g}_2 = \mbox{span}(X_3)$; in other words, the vector fields $X_1, X_2$ are the basis of the subspace of horizontal vector fields. \\
In exponential coordinates, the group structure on the Heisenberg group induced by these (left-invariant) vector fields is as follows:
$$ (x_1,x_2,x_3) \circ (x_1', x_2', x_3') = (x_1 + x_1', x_2 + x_2', x_3 + x_3' + \frac12 (x_1 x_2' - x_2 x_1')).$$
In particular, we see that the inverse of $(x_1, x_2, x_3)$ is $(-x_1, -x_2, -x_3)$. Moreover, the Heisenberg group is equipped with the dilations $\delta_\lambda: \mathbb{H}^1 \rightarrow \mathbb{H}^1$ defined by the formula
$$ \delta_\lambda((x_1, x_2, x_3)) = (\lambda x_1, \lambda x_2, \lambda^2 x_3).$$
The space $\mathbb{H}^1$ is equipped with the Carnot-Carath\'eodory distance generated by the horizontal vector fields $X_1, X_2$.
The measure $\mathcal{L}^3$ is the Haar measure of this group. Moreover, if we calculate the Jacobian of the dilation $\delta_\lambda$, we see that dilations rescale the measure $\mathcal{L}^3$ by a factor of $\lambda^4$. In particular, we have that
$$ \mathcal{L}^3(U(x,r)) = \mathcal{L}^3(U(0,r)) = \mathcal{L}^3(U(0,1)) \, r^4.$$
\end{example}


\subsection{Local least gradient problem in $\mathbb{R}^n$}

Let $\Omega \subset \R^n$ be a bounded open set. The least gradient problem involves minimisation of the relaxed total variation functional relative to Dirichlet boundary data, i.e. the functional $\Phi_h: L^{\frac{n}{n-1}}(\Omega)
\rightarrow (-\infty,+\infty]$ defined by 
\begin{equation}\label{Def_Phi_PDE}
\Phi_h(u) = \left\{ \begin{array}{ll} \displaystyle
\int_{\Omega}\vert Du \vert + \int_{\partial \Omega} \vert u - h \vert\, d \mathcal H^{n-1} & \hbox{if} \ u \in BV(\Omega),
\\ +\infty & \hbox{if} \  u \in  L^{\frac{n}{n-1}}(\Omega) \setminus BV(\Omega).
\end{array}\right.
\end{equation}
An alternative formulation, in the language of Euler-Lagrange equations, has been introduced in \cite{MRL}. The authors consider the Dirichlet problem for the $1$-Laplace operator
\begin{equation}\label{ELe}
    \left\{\begin{array}{ll}
    \displaystyle -\mbox{div}\Big(\frac{Du}{|Du|}\Big)=0\,,&\hbox{in }\Omega\,,\\[10pt]
    u=h\,,&\hbox{on }\partial\Omega\,,
    \end{array}\right.
\end{equation}
and show that it has a solution $u \in BV(\Omega)$  for every $h\in
L^1(\partial\Omega)$. The notion of a solution is introduced in the language of Anzelotti pairings; for the definition of Anzelotti pairings in Euclidean spaces, we refer to \cite{Anz}. We skip the definition in this introduction as we need to prove existence of a similar pairing for Carnot groups in Section \ref{sec:3lgpcarnot}.

\begin{definition}\label{dfn:1laplaceeuclidean}
We say that $u \in BV(\Omega)$ is a solution of the 1-Laplace equation for boundary data $h \in L^1(\partial\Omega, \mathcal{H}^{n-1})$, if there exists a vector field $\mathbf{z} \in L^\infty(\Omega; \mathbb{R}^n)$ such that $\| \mathbf{z} \|_{L^\infty(\Omega; \mathbb{R}^n)} \leq 1$ and the following conditions hold:
\begin{equation}
\mbox{div}(\mathbf{z}) = 0 \qquad \mbox{as distributions in }\Omega ;
\end{equation}
\begin{equation}
(\mathbf{z}, Du) = |Du| \qquad \mbox{as measures in }\Omega;
\end{equation}
\begin{equation}
[\mathbf{z} \cdot \nu] \in \mbox{sign}(h-u) \qquad \mathcal{H}^{n-1}-\mbox{a.e. on }\partial\Omega.
\end{equation}
\end{definition}
In \cite{MRL}, the authors prove that the two approaches are equivalent - minimizers of the functional $\Phi_h$ coincide with the solutions of problem \eqref{ELe}.


\subsection{Nonlocal least gradient problem in a metric measure space}

In this subsection, we recall the notion of a solution to the nonlocal least gradient problem on a metric measure space. In the final Section of this paper, we will connect the nonlocal and local versions of the least gradient problem in the setting of Carnot groups.

\begin{definition}
Let $(X,d,\nu)$ be a metric measure space. A metric random walk $m$ is a family of probability measures $m_x$ which satisfies two conditions: \\
(i) the dependence on $x$ is borelian, namely for Borel sets $A \subset X$ and $B \subset \mathbb{R}$ we have that $\{ x \in X: m_x(A) \in B\}$ is Borel; \\
(ii) each $m_x$ has finite first moment, i.e. for any $z \in X$ we have $\int_X d(z,y) \, dm_x(y) < \infty$. \\
We say that the measure $\nu$ is invariant (with respect to a random walk $m$), if for any $\nu$-measurable set $A$ and $\nu-$almost all $x \in X$ the map $x \mapsto m_x(A)$ is $\nu$-measurable and
$$ \nu(A) = \int_X m_x(A) d\nu(x).$$
We say that the measure $\nu$ is reversible, if a more detailed balance condition holds:
$$ dm_x(y) d\nu(x) = dm_y(x) d\nu(y).$$
\end{definition}

We present two examples of random walks. The first one has been studied extensively, for instance in \cite{AMRT}, \cite{BBM} and \cite{MRT}, as a prototype of nonlocal interaction on $\mathbb{R}^n$. The second one is its analogue which can be defined in a more general setting and which will be the focus of Section \ref{sec:4nonlocalconvergence}.

\begin{example}
(1) Let $X = \mathbb{R}^n$ with the Euclidean distance and let $\nu = \mathcal{L}^n$. Let $J: \mathbb{R}^n \rightarrow [0,\infty)$ be a measurable, nonnegative and radially symmetric function such that $\int_{\mathbb{R}^n} J(z) dz = 1$. Then, we define the random walk $m^J$ by the formula
$$ m^J_x(A) = \int_A J(x-y) \, d\mathcal{L}^n(y) \qquad \mbox{for every Borel set } A.$$
(2) Let $(X,d,\nu)$ be a metric measure space. Assume that the measure of balls in $X$ is finite and that $\mbox{supp } \nu = X$. Given $\varepsilon > 0$, we define the random walk $m^{\nu,\varepsilon}$ by the formula
$$ m_x^{\nu, \varepsilon} = \frac{\nu \res B(x,\varepsilon)}{\nu(B(x,\varepsilon))}.$$
The metric random walks defined in (1) and (2) are invariant and reversible.
\end{example} 

For an open set $\Omega \subset X$, denote by $\Omega_m$ the set
$$ \Omega_m = \{ x \in X: m_x(\Omega) > 0 \}.$$
Moreover, we denote
$$ \partial_m \Omega = \Omega_m \backslash \Omega$$
and consider open sets $\Omega$ such that 
\begin{equation}\label{eq:positiveboundarymeasure}
0 < \nu(\Omega) < \nu(\Omega_m) < \nu(X).  
\end{equation}

\begin{definition} We say that $(m,\nu)$ satisfies a $q$-Poincar\'{e} Inequality, $q \geq 1$, if
\begin{equation}\label{Poincareq}
\lambda\int_\Omega \left|u(x) \right|^q \, d\nu(x)\le \int_{\Omega}
\int_{\Omega_m}  |u_{\psi}(y)-u(x)|^q \, dm_x(y) \,
d\nu(x)+\int_{\partial_m\Omega}|\psi (y)|^q \, d\nu(y)
 \end{equation}
for all $u\in L^q(\Omega, \nu)$.
\end{definition}

The nonlocal least gradient problem is the problem of minimisation of the nonlocal total variation with respect to Dirichlet boundary condition. Given $\psi \in L^1(\Omega_m)$, we set
$$ u_\psi (x) = \twopartdef{u(x)}{x \in \Omega,}{\psi(x)}{x \in \partial_m\Omega}$$
and we minimise the relaxed total variation functional
\begin{equation}\label{eq:functionaljpsi}
\mathcal{J}_\psi(u) = \frac12 \int_{\Omega_m} \int_{\Omega_m} |u_\psi(y) - u_\psi(x)| \, dm_x(y) \, d\nu(x).
\end{equation} 
An equivalent formulation, of the Euler-Lagrange type, has been proved in \cite{GM2019} (see also \cite{MPRT} for the case of the random walk $m^J$). On a metric measure space $(X,d,\mu)$ equipped with a random walk $m$, assuming that $\nu$ is invariant and reversible, the authors of \cite{MST1} introduced the $m-1$-Laplacian operator $\Delta^m_1$, which formally is the operator
$$\Delta^m_1 u(x) = \int_{\Omega_m}
\frac{u_\psi(y)-u(x)}{|u_\psi(y)-u(x)|}dm_x(y).$$
In the paper \cite{GM2019} the authors consider the {\it nonlocal
$1$-Laplacian problem} with Dirichlet boundary condition~$\psi$:
\begin{equation}\label{1700.intro}
\left\{\begin{array} {ll} \displaystyle  -\Delta^m_1 u(x)=0,& x\in\Omega,
\\[10pt] u(x)=\psi(x),& x\in \partial_m\Omega.
\end{array}\right.
\end{equation}
Note that due to the assumption \eqref{eq:positiveboundarymeasure} the boundary condition is well-defined, since $\nu(\partial_m \Omega) > 0$. Now, we introduce the Euler-Lagrange equations for the functional $\mathcal{J}_\psi$.

\begin{definition}\label{Defi.1.var} {\rm Let $\psi\in L^1(\partial_m\Omega)$.
We say that $u\in BV_m(\Omega)$ is a    solution  to
\eqref{1700.intro}  if there exists $g\in L^\infty(\Omega_m\times \Omega_m)$ with
$\|g\|_\infty\leq 1$ verifying
\begin{equation}\label{lasim}
 g(x,y)=-g(y,x)\quad\hbox{for  $(\nu\otimes dm_x)$-a.e $(x,y)$ \ in $\Omega_m\times \Omega_m$},
\end{equation}
\begin{equation}\label{1-lapla2.var}
g(x,y)\in \mbox{sign}(u_\psi(y)-u_\psi(x))\quad
\hbox{for  $(\nu\otimes dm_x)$-a.e $(x,y)$ \ in $\Omega_m\times \Omega_m$},
\end{equation} and
\begin{equation}\label{1-lapla.var}
-\int_{\Omega_m}g(x,y)\,dm_x(y)= 0 \quad \nu-\mbox{a.e }x\in \Omega.
\end{equation}
The two notions are equivalent if we assume that the domain $\Omega$ supports a nonlocal $p$-Poincar\'e inequality for all $p > 1$. The following result was proved in \cite[Theorem 2.8]{GM2019}.}
\end{definition}

\begin{theorem} \label{teo.varia}   
Let $\psi\in L^\infty(\partial_m\Omega)$.  Suppose that $\Omega$ supports a $p$-Poincar\'{e} inequality for all $p > 1$. Then, there exists a solution $u \in L^1(\Omega)$ to \eqref{1700.intro}. Moreover, any $u\in L^1(\Omega)$ is a solution to \eqref{1700.intro} if and only if it is a minimiser of the functional $\mathcal{J}_{\psi}$  given in \eqref{eq:functionaljpsi}.
\end{theorem}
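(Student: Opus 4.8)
The plan is to read \eqref{1700.intro} as the Euler--Lagrange problem for the convex functional $\mathcal{J}_\psi$ in \eqref{eq:functionaljpsi} and to produce a single antisymmetric ``calibration'' $g$ that certifies every minimiser at once. I would first dispose of the implication \emph{solution $\Rightarrow$ minimiser}. If $u$ solves \eqref{1700.intro} with some $g$ obeying \eqref{lasim}--\eqref{1-lapla.var}, then for any competitor $v\in L^1(\Omega)$ the elementary bound $|v_\psi(y)-v_\psi(x)|\ge g(x,y)\bigl(v_\psi(y)-v_\psi(x)\bigr)$ holds pointwise because $\|g\|_\infty\le1$. Integrating, and using the antisymmetry \eqref{lasim} together with reversibility of $\nu$ to symmetrise, one gets $\tfrac12\iint g(x,y)(v_\psi(y)-v_\psi(x))\,dm_x(y)\,d\nu(x)=-\iint g(x,y)v_\psi(x)\,dm_x(y)\,d\nu(x)$; splitting the outer domain into $\Omega$ and $\partial_m\Omega$ and invoking the equilibrium identity \eqref{1-lapla.var} annihilates the interior term and leaves a boundary quantity depending only on $\psi$. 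Thus $\mathcal{J}_\psi(v)$ is bounded below by a constant independent of $v$, and by the sign condition \eqref{1-lapla2.var} this bound is attained at $v=u$, so $u$ minimises $\mathcal{J}_\psi$.

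For the converse and for existence I would build one calibration by a $p\to1$ approximation, which is exactly where the hypothesis that $\Omega$ supports \eqref{Poincareq} for all $q>1$ is used. For $p>1$ minimise $\mathcal{J}_\psi^p(u)=\tfrac{1}{2p}\iint|u_\psi(y)-u_\psi(x)|^p\,dm_x(y)\,d\nu(x)$; the $p$-Poincar\'e inequality yields coercivity in $L^p(\Omega)$ and strict convexity a unique minimiser $u_p$, which after truncation at level $\|\psi\|_\infty$ satisfies $\|u_p\|_\infty\le\|\psi\|_\infty$. Its Euler--Lagrange equation says precisely that $g_p(x,y):=|u_{p,\psi}(y)-u_{p,\psi}(x)|^{p-2}(u_{p,\psi}(y)-u_{p,\psi}(x))$ is antisymmetric and satisfies the equilibrium identity on $\Omega$. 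Since $|g_p|\le(2\|\psi\|_\infty)^{p-1}$ uniformly, I can pass to a subsequence along which $g_p$ converges weakly-$*$ in $L^\infty(\Omega_m\times\Omega_m)$ to some $g$ with $\|g\|_\infty\le1$, antisymmetric, and (testing against functions in $L^1(\Omega)$) satisfying \eqref{1-lapla.var}, while $u_p$ converges weakly-$*$ to some bounded $u$.

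The crux is recovering the sign condition \eqref{1-lapla2.var} in the limit, and the main obstacle is that weak-$*$ convergence of $g_p$ is too weak to pass to the limit in the product $g_p\,(u_{p,\psi}(y)-u_{p,\psi}(x))$. The device that rescues the argument is the boundary reduction from the first paragraph: applied to the pair $(u_p,g_p)$ it gives $2p\,\mathcal{J}_\psi^p(u_p)=\iint g_p(x,y)(u_{p,\psi}(y)-u_{p,\psi}(x))\,dm_x\,d\nu=:L_p$, where $L_p$ is a boundary integral of $\psi$ against $\int g_p\,dm_x$ and therefore converges, using only the weak-$*$ convergence of $g_p$, to the analogous quantity $L$ for $g$. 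Combining weak lower semicontinuity of $\mathcal{J}_\psi$, the H\"older bound $2\mathcal{J}_\psi(u_p)\le L_p^{1/p}\,\nu(\Omega_m)^{1/p'}$ (with $p'$ the conjugate exponent), and the comparison $\mathcal{J}_\psi^p(u_p)\le\mathcal{J}_\psi^p(v)$ tested against bounded competitors, I would conclude both that $u$ minimises $\mathcal{J}_\psi$ and that $2\mathcal{J}_\psi(u)=L$. On the other hand the same boundary reduction for the limit pair $(u,g)$ yields $\iint g(x,y)(u_\psi(y)-u_\psi(x))\,dm_x\,d\nu=L=\iint|u_\psi(y)-u_\psi(x)|\,dm_x\,d\nu$; since the integrand on the left never exceeds that on the right, they coincide a.e., which is the sign condition. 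Hence $u$ is a solution, proving existence.

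Finally, to upgrade this to \emph{every minimiser is a solution}, I would note that the constructed $g$ is a \emph{universal} calibration: being antisymmetric with $\|g\|_\infty\le1$ and satisfying \eqref{1-lapla.var}, it forces $\mathcal{J}_\psi(u_1)\ge\tfrac12 L$ for every $u_1\in L^1(\Omega)$, with $\tfrac12 L=\min\mathcal{J}_\psi$. For any minimiser $u_1$ equality must hold, and the same equality-of-integrals argument shows $g\in\mbox{sign}(u_{1,\psi}(y)-u_{1,\psi}(x))$ a.e., so $g$ certifies $u_1$. Together with the easy direction this gives the stated equivalence. I expect the only genuinely delicate point to be the passage to the sign condition, which hinges on reducing the energy to a functional of the boundary datum alone --- and this reduction is where reversibility and the equilibrium identity, i.e. the structure of the metric random walk, are indispensable.
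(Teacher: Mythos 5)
This theorem is not proved in the paper at all: the sentence immediately preceding it states that the result ``was proved in \cite[Theorem 2.8]{GM2019}'', so there is no internal argument to compare yours against. Judged on its own merits, your proposal is correct, and it reconstructs essentially the route taken in the cited literature (\cite{GM2019}, and \cite{MPRT} for the kernel case): existence of the nonlocal $p$-energy minimisers $u_p$ via the $p$-Poincar\'e inequality, the uniform bounds $\|u_p\|_\infty\le\|\psi\|_\infty$ and $|g_p|\le(2\|\psi\|_\infty)^{p-1}$, weak-$*$ extraction of the pair $(u,g)$, recovery of the sign condition \eqref{1-lapla2.var} by reducing the energy $\iint g_p\,\bigl((u_p)_\psi(y)-(u_p)_\psi(x)\bigr)\,dm_x\,d\nu$ to a boundary functional of $\psi$ (which is stable under weak-$*$ convergence), and the calibration inequality for the equivalence with minimisers; your ``universal calibration'' observation is exactly how the direction minimiser $\Rightarrow$ solution is handled. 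Three points you gloss over, all standard but worth tightening: (i) uniqueness of $u_p$ is not bare strict convexity --- if two minimisers have a.e.\ equal nonlocal differences, you must apply the Poincar\'e inequality \eqref{Poincareq} to their difference (with zero boundary datum) to conclude they coincide, and this uniqueness is what makes the truncation argument yield $\|u_p\|_\infty\le\|\psi\|_\infty$; (ii) your limit $u$ is first shown to minimise $\mathcal{J}_\psi$ only against \emph{bounded} competitors, and this must be upgraded to all of $L^1(\Omega)$ by truncation, using $|T_k a-T_k b|\le|a-b|$ with $k\ge\|\psi\|_\infty$; (iii) the Fubini and integrability manipulations throughout require $\nu(\Omega_m)<\infty$, which is implicit in the standing assumptions on $\Omega$.
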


A key point is that since Carnot groups are length spaces equipped with a doubling measure, by \cite[Proposition 2.16]{GM2019} the Poincar\'e inequality holds for all $p > 1$ for bounded open sets in Carnot groups equipped with the random walk $m^{\nu,\varepsilon}$. In particular, there exist solutions to the nonlocal least gradient problem. A natural question is what happens if we let $\varepsilon \rightarrow 0$; in \cite{MPRT} the authors proved that in the Euclidean space for the random walk $m^J$ the solutions to the nonlocal problem converge on a subsequence to a solution of the local problem. We will explore this issue on Carnot groups in Section \ref{sec:4nonlocalconvergence}.

\section{Least gradient problem in Carnot groups}\label{sec:3lgpcarnot}

In this Section, we begin by introducing the notion of Anzelotti-type pairings on Carnot groups. The first step is existence of a weak normal trace of a divergence-measure vector field on the boundary of a sufficiently regular set, which will later play a role as the right notion of boundary values of a solution to the least gradient problem. We follow the outline presented by Anzelotti in \cite{Anz}; for basic facts about BV functions in CC spaces we refer to \cite{Vitt}. The second step will be a construction of a pairing $(\mathbf{z}, Xu)$ between a vector field and a BV function, which enables an analogue of Green's formula to hold for general BV functions. Finally, we introduce the notion of solution of the least gradient problem on Carnot groups, using the Anzelotti pairings introduced above.

\subsection{Anzelotti-type pairings on Carnot groups}

In this subsection, our goal is to introduce an analogue of Anzelotti pairings on Carnot groups. Let us introduce the following spaces:
$$ DM_X(\Omega) = \{ \mathbf{z} \in L^\infty(\Omega; \mathbb{R}^m): \, \mbox{div}_X(\mathbf{z}) \mbox{ is a finite measure in } \Omega\},$$
and
$$ BV_X^c(\Omega) = BV_X(\Omega) \cap L^\infty(\Omega) \cap C(\Omega).$$
We want to construct the weak normal trace on the boundary of a sufficiently smooth set. We follow the outline of \cite{Anz}. Firstly, we will introduce an auxiliary pairing
$$ \langle \mathbf{z}, u \rangle_{\partial\Omega}: DM_X(\Omega) \times BV_X^c(\Omega) \rightarrow \mathbb{R}$$
and then provide its representation by an $L^\infty$ function.

\begin{theorem}\label{thm:bilinearform}
Assume that $\Omega \subset \mathbb{R}^n$ is an $X$-Lipschitz domain with compact boundary. Then, there exists a bilinear map $\langle \mathbf{z}, u \rangle_{\partial\Omega}: DM_X(\Omega) \times BV_X^c(\Omega) \rightarrow \mathbb{R}$ such that
$$ \langle \mathbf{z}, u \rangle_{\partial\Omega} = \int_{\partial\Omega} Tu \, \mathbf{z} \cdot \nu_\Omega \, d|\partial\Omega|_X \qquad \mbox{ if } \mathbf{z} \in C^1(\overline{\Omega}; \mathbb{R}^m),$$
where $\nu_\Omega$ denotes the horizontal normal, and
$$ |\langle \mathbf{z}, u \rangle_{\partial\Omega}| \leq C(\Omega) \| \mathbf{z} \|_{L^\infty(\Omega; \mathbb{R}^m)} \cdot \| Tu \|_{L^1(\partial\Omega, |\partial\Omega|_X)}.$$
If the domain is $X$-regular, then the previous property holds with constant equal to one:
$$ |\langle \mathbf{z}, u \rangle_{\partial\Omega}| \leq \| \mathbf{z} \|_{L^\infty(\Omega; \mathbb{R}^m)} \cdot \| Tu \|_{L^1(\partial\Omega, |\partial\Omega|_X)}.$$
\end{theorem}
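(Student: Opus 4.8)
The plan is to build the boundary pairing by first defining, for a \emph{smooth} function $w\in C^\infty(\Omega)\cap BV_X(\Omega)\cap L^\infty(\Omega)$ and $\mathbf{z}\in DM_X(\Omega)$, the quantity
\[
R(\mathbf{z},w):=\int_\Omega w\,d\,\mbox{div}_X(\mathbf{z})+\int_\Omega \mathbf{z}\cdot Xw\,d\mathcal L^n,
\]
which is well-defined because $w$ is continuous and bounded (so it pairs with the finite measure $\mbox{div}_X(\mathbf{z})$) while $Xw\in L^1(\Omega;\mathbb R^m)$ (so it pairs with $\mathbf{z}\in L^\infty$). For a general $u\in BV_X^c(\Omega)$ the term $\int_\Omega\mathbf{z}\cdot Xu$ is \emph{not} available, since $Xu$ is only a measure and $\mathbf{z}$ merely $L^\infty$; this is exactly why one must pass through smooth representatives of the trace. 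The heart of the construction is thus to show that $R(\mathbf{z},w)$ depends on $w$ only through $Tw$, after which I would set $\langle\mathbf{z},u\rangle_{\partial\Omega}:=R(\mathbf{z},w)$ for the extension $w$ of $Tu$ given by Theorem \ref{thm:vittoneextension}.

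The key lemma is that $R(\mathbf{z},v)=0$ whenever $v$ is smooth, lies in $BV_X(\Omega)\cap L^\infty(\Omega)$, is supported in a neighbourhood of the (compact) boundary, and satisfies $Tv=0$. I would prove it with a cut-off $\eta_\varepsilon=\psi(\mbox{dist}(\cdot,\partial\Omega)/\varepsilon)$ that equals $1$ away from $\partial\Omega$ and vanishes on $A_\varepsilon:=\{x\in\Omega:\mbox{dist}(x,\partial\Omega)\le\varepsilon\}$. Since $\eta_\varepsilon v\in C_c^\infty(\Omega)$, the definition of the distributional $X$-divergence — together with the identity $X_j^*=-X_j$ valid on Carnot groups — gives $\int_\Omega\eta_\varepsilon v\,d\,\mbox{div}_X(\mathbf{z})=-\int_\Omega\mathbf{z}\cdot X(\eta_\varepsilon v)\,d\mathcal L^n$, i.e. $R(\mathbf{z},\eta_\varepsilon v)=0$. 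Expanding $X(\eta_\varepsilon v)=\eta_\varepsilon Xv+vX\eta_\varepsilon$ and letting $\varepsilon\to0$, dominated convergence (using $Xv\in L^1$, that $\mbox{div}_X(\mathbf{z})$ is finite with $|\mbox{div}_X(\mathbf{z})|(A_\varepsilon)\to0$, and $\eta_\varepsilon\to1$ pointwise in $\Omega$) reduces the identity to
\[
R(\mathbf{z},v)=-\lim_{\varepsilon\to0}\int_\Omega \mathbf{z}\cdot v\,X\eta_\varepsilon\,d\mathcal L^n.
\]
I expect this boundary-layer term to be the main obstacle. As $|X\eta_\varepsilon|\lesssim\varepsilon^{-1}$ on the shell $A_{2\varepsilon}\setminus A_\varepsilon$, the integral is controlled by $\|\mathbf{z}\|_\infty\,\varepsilon^{-1}\int_{A_{2\varepsilon}}|v|\,d\mathcal L^n$, and it vanishes once one knows the trace-layer estimate $\limsup_{\varepsilon\to0}\varepsilon^{-1}\int_{A_\varepsilon}|v|\,d\mathcal L^n\le C\int_{\partial\Omega}|Tv|\,d|\partial\Omega|_X$, whose right-hand side is $0$ because $Tv=0$. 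This averaged convergence of a $BV_X$ function to its trace across boundary layers is the genuinely technical input, which I would extract from the trace theory of \cite{Vitt} underlying Theorem \ref{thm:vittonetraces}.

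Granting the key lemma, I would define $\langle\mathbf{z},u\rangle_{\partial\Omega}:=R(\mathbf{z},w)$ using the extension $w$ of $Tu$; independence of the chosen extension follows by applying the lemma to the difference of two extensions (smooth, supported near $\partial\Omega$, with zero trace), and bilinearity is inherited from that of $R$ and the linearity of $T$ (after using independence to represent $T(u_1+u_2)$ by a sum of extensions). If $\mathbf{z}\in C^1(\overline\Omega;\mathbb R^m)$, then Green's formula of Theorem \ref{thm:vittonetraces} yields $R(\mathbf{z},w)=\int_{\partial\Omega}Tw\,\mathbf{z}\cdot\nu_\Omega\,d|\partial\Omega|_X=\int_{\partial\Omega}Tu\,\mathbf{z}\cdot\nu_\Omega\,d|\partial\Omega|_X$, the first claimed identity.

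For the norm estimate I would take as representative the extension $w_\varepsilon$ of $h:=Tu$ from Theorem \ref{thm:vittoneextension}, which is supported in $A_\varepsilon$ and satisfies $\|w_\varepsilon\|_{L^\infty}\le\|h\|_{L^\infty}$ and $\int_\Omega|Xw_\varepsilon|\,d\mathcal L^n\le C(\Omega)\|h\|_{L^1(\partial\Omega,|\partial\Omega|_X)}$. Then
\[
|\langle\mathbf{z},u\rangle_{\partial\Omega}|=|R(\mathbf{z},w_\varepsilon)|\le \|h\|_{L^\infty}\,|\mbox{div}_X(\mathbf{z})|(A_\varepsilon)+\|\mathbf{z}\|_{L^\infty}\,C(\Omega)\,\|h\|_{L^1(\partial\Omega,|\partial\Omega|_X)},
\]
and since the left-hand side does not depend on $\varepsilon$, letting $\varepsilon\to0$ kills the first summand (as $|\mbox{div}_X(\mathbf{z})|(A_\varepsilon)\to0$) and leaves the bound with constant $C(\Omega)$. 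When $\partial\Omega$ is $X$-regular, Theorem \ref{thm:vittoneextension} permits replacing $C(\Omega)$ by $(1+\varepsilon)$, and the same passage to the limit gives the estimate with constant one.
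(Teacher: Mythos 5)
Your proposal follows the same architecture as the paper's proof: define the pairing on smooth $W^{1,1}_X$ representatives by $R(\mathbf{z},w)=\int_\Omega w\,d\,\mbox{div}_X(\mathbf{z})+\int_\Omega \mathbf{z}\cdot Xw\,d\mathcal{L}^n$, show that it depends only on $Tw$, transport the definition to all of $BV_X^c(\Omega)$ through the extension operator of Theorem \ref{thm:vittoneextension}, read off the $C^1(\overline{\Omega})$ identity from Theorem \ref{thm:vittonetraces}, and prove both norm estimates by choosing a representative supported in a thin layer $A_\varepsilon$, so that the divergence term is bounded by $\|Tu\|_{L^\infty}\,|\mbox{div}_X(\mathbf{z})|(A_\varepsilon)\to 0$ while the gradient term carries the constant $C(\Omega)$, respectively $(1+\varepsilon)$ for $X$-regular domains. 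This last step, including the limit $\varepsilon\to 0$, is exactly the paper's argument.

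The only genuine divergence is in the proof of trace-independence, which is the key well-definedness step in both texts. The paper approximates the zero-trace difference of two representatives by compactly supported smooth functions (invoking Proposition \ref{prop:approximationpreservingtraces}) and passes to the limit, each approximant contributing zero by the distributional definition of $\mbox{div}_X$. You instead run a cutoff argument, which is correct in outline but shifts the entire technical weight onto the boundary-layer estimate $\limsup_{\varepsilon\to0}\varepsilon^{-1}\int_{A_\varepsilon}|v|\,d\mathcal{L}^n\le C\int_{\partial\Omega}|Tv|\,d|\partial\Omega|_X$. Be aware that this estimate is not among the results the paper imports from \cite{Vitt} (Theorems \ref{thm:vittonetraces}--\ref{thm:vittoneextension}, Proposition \ref{prop:vittoneapproximation}, Theorem \ref{thm:vittonecontinuity}), so as written your key lemma rests on an input you would have to extract or reprove; it is classical in the Euclidean case (it appears in Anzelotti's construction \cite{Anz}) and plausible for $X$-Lipschitz domains, where it amounts to an upper Minkowski-content bound in terms of the $X$-perimeter, but it is a genuine additional ingredient (to be fair, the paper's own limit passage at the corresponding point is also stated rather informally). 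Two smaller technical points: your cutoff $\eta_\varepsilon=\psi(\mbox{dist}(\cdot,\partial\Omega)/\varepsilon)$ is only Lipschitz, since the CC distance is not smooth, so $\eta_\varepsilon v\notin C_c^\infty(\Omega)$ and you must either mollify the cutoff or note that the identity $\int_\Omega\varphi\,d\,\mbox{div}_X(\mathbf{z})=-\int_\Omega\mathbf{z}\cdot X\varphi\,d\mathcal{L}^n$ extends to compactly supported Lipschitz test functions; and the bound $|X\eta_\varepsilon|\le C/\varepsilon$ uses the eikonal-type inequality $|X\,\mbox{dist}(\cdot,\partial\Omega)|\le 1$ a.e., which holds but deserves mention. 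In exchange for these burdens, your route makes explicit and quantitative the boundary behaviour that the paper's softer approximation argument leaves implicit.
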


\begin{proof}
In order for the first property to hold, for all $\mathbf{z} \in DM_X(\Omega)$ and $u \in BV_X^c(\Omega) \cap W^{1,1}_X(\Omega)$ we set
$$ \langle \mathbf{z}, u \rangle_{\partial\Omega} = \int_\Omega u \, \mbox{div}_X(\mathbf{z}) \, d\mathcal{L}^n + \int_\Omega \mathbf{z} \cdot Xu \, d\mathcal{L}^n.$$
By Theorem \ref{thm:vittonetraces} the first property holds. Moreover, this map is bilinear. For general $u$, if $Xu$ is merely a measure, the formula above is not well-defined; we will extend it by approximating general $u \in BV_X^c(\Omega)$ using smooth functions. To this end, we notice that if $u, v \in BV_X^c(\Omega) \cap W^{1,1}_X(\Omega)$ have the same trace, then
$$ \langle \mathbf{z}, u \rangle_{\partial\Omega} = \langle \mathbf{z}, v \rangle_{\partial\Omega}.$$
To this end, consider a sequence $g_n \in C_0^\infty(\Omega)$ approximating $u - v$ as in Proposition \ref{prop:approximationpreservingtraces}. We obtain
$$ \langle \mathbf{z}, u - v \rangle_{\partial\Omega} = \int_\Omega (u - v) \, \mbox{div}_X(\mathbf{z}) \, d\mathcal{L}^n + \int_\Omega \mathbf{z} \cdot X(u - v) \, d\mathcal{L}^n =$$
$$ = \lim_{j \rightarrow \infty} \bigg( \int_\Omega g_j \, \mbox{div}_X(\mathbf{z}) \, d\mathcal{L}^n + \int_\Omega \mathbf{z} \cdot X g_j \, d\mathcal{L}^n\bigg) = \lim_{j \rightarrow \infty} \int_{\partial\Omega} 0 \, d|\partial\Omega|_X = 0.$$
Since by Theorem \ref{thm:vittoneextension} for every $u \in BV_X(\Omega)$ there exists a smooth function with the same trace, we define $\langle \mathbf{z}, u \rangle_{\partial\Omega}$ for all $u \in BV_X^c(\Omega)$ by
$$ \langle \mathbf{z}, u \rangle_{\partial\Omega} = \langle \mathbf{z}, v \rangle_{\partial\Omega},$$
where $v$ is any function in $BV_X(\Omega) \cap W^{1,1}_X(\Omega)$ with the same trace as $u$. In view of the calculation above, this uniquely defines $\langle \mathbf{z}, u \rangle_{\partial\Omega}$ for any $u \in BV_X^c(\Omega)$.

Now, we have to prove the second property. Let us take a sequence $u_j \in BV_X^c(\Omega) \cap C^\infty(\Omega)$ which converges to $u$ as in Proposition \ref{prop:approximationpreservingtraces}. Then, we get that
$$ |\langle \mathbf{z}, u \rangle_{\partial\Omega}| = |\langle \mathbf{z}, u_j \rangle_{\partial\Omega}| = |\int_\Omega u_j \, \mbox{div}_X(\mathbf{z}) \, d\mathcal{L}^n + \int_\Omega \mathbf{z} \cdot Xu_j \, d\mathcal{L}^n| \leq $$
$$ \leq | \int_\Omega u_j \, \mbox{div}_X(\mathbf{z}) \, d\mathcal{L}^n | + \| \mathbf{z} \|_{L^\infty(\Omega; \mathbb{R}^m)} \int_\Omega |Xu_j| \, d\mathcal{L}^n.$$
We pass to the limit with $j \rightarrow +\infty$ and obtain
$$ |\langle \mathbf{z}, u \rangle_{\partial\Omega}| \leq | \int_\Omega u \, \mbox{div}_X(\mathbf{z}) \, d\mathcal{L}^n | + \| \mathbf{z} \|_{L^\infty(\Omega; \mathbb{R}^m)} \int_\Omega |Xu|. $$
Fix $\varepsilon > 0$. If the domain is $X$-regular, we may take $u$ to be as in Theorem \ref{thm:vittoneextension}, so that
$$ \int_\Omega |Xu| \leq (1 + \varepsilon) \| Tu \|_{L^1(\partial\Omega, |\partial\Omega|_X)}$$
and $u$ is supported in $\Omega \backslash \Omega_\varepsilon$, where
$$ \Omega_\varepsilon = \{ x \in \Omega: \mbox{ dist}(x, \partial\Omega) > \varepsilon \}.$$
We insert it in the estimate above and obtain
$$ |\langle \mathbf{z}, u \rangle_{\partial\Omega}| \leq | \int_{\Omega \backslash \Omega_\varepsilon} \| u \|_{L^\infty(\Omega)} \, \mbox{div}_X(\mathbf{z}) \, d\mathcal{L}^n | + (1 + \varepsilon) \| \mathbf{z} \|_{L^\infty(\Omega; \mathbb{R}^m)} \| Tu \|_{L^1(\partial\Omega, |\partial\Omega|_X)}. $$
As $\varepsilon$ was arbitrary, we obtain
$$ |\langle \mathbf{z}, u \rangle_{\partial\Omega}| \leq \| \mathbf{z} \|_{L^\infty(\Omega; \mathbb{R}^m)} \| Tu \|_{L^1(\partial\Omega, |\partial\Omega|_X)}. $$
If the domain $\Omega$ is only $X$-Lipschitz, then the approximating sequence instead satisfies
$$ \int_\Omega |Xu| \leq C(\Omega) \| Tu \|_{L^1(\partial\Omega, |\partial\Omega|_X)}$$
so we obtain the final estimate
$$ |\langle \mathbf{z}, u \rangle_{\partial\Omega}| \leq C(\Omega) \| \mathbf{z} \|_{L^\infty(\Omega; \mathbb{R}^m)} \| Tu \|_{L^1(\partial\Omega, |\partial\Omega|_X)}. $$
\end{proof}

Now, we provide a representation of the bilinear map $\langle \mathbf{z}, u \rangle_{\partial\Omega}$ by a linear operator into $L^\infty(\partial\Omega, |\partial\Omega|_X)$.

\begin{theorem}\label{thm:definitionofweaktrace}
Assume that $\Omega \subset \mathbb{R}^n$ is an $X$-Lipschitz domain with compact boundary. Then, there exists a linear operator $\gamma_X: DM_X(\Omega) \rightarrow L^\infty(\partial\Omega, |\partial\Omega|_X)$ such that
$$ \| \gamma_X(\mathbf{z}) \|_{L^\infty(\partial\Omega, |\partial\Omega|_X)} \leq C(\Omega) \| \mathbf{z} \|_{L^\infty(\Omega)},$$
$$ \langle \mathbf{z}, u \rangle_{\partial\Omega} = \int_{\partial\Omega} Tu \, \gamma_X(\mathbf{z}) \, d|\partial\Omega|_X \qquad \mbox{ for all } u \in BV_X^c(\Omega)$$
and
$$ \gamma_X(\mathbf{z}) (x) = \mathbf{z} \cdot \nu_\Omega \qquad \mbox{ if } \mathbf{z} \in C^1(\overline{\Omega}, \mathbb{R}^m).$$
If the domain is $X$-regular, then the constant $C(\Omega)$ equals one. The function $\gamma_X(\mathbf{z})$ is a weakly defined normal trace of $\mathbf{z}$; for this reason, we will denote it by $[\mathbf{z} \cdot \nu]_X$.
\end{theorem}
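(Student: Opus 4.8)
The plan is to represent the bilinear map from Theorem~\ref{thm:bilinearform} using duality. For a fixed $\mathbf{z} \in DM_X(\Omega)$, Theorem~\ref{thm:bilinearform} shows that the functional $u \mapsto \langle \mathbf{z}, u \rangle_{\partial\Omega}$ is bounded by a constant times $\| Tu \|_{L^1(\partial\Omega, |\partial\Omega|_X)}$. Since this bound depends only on the trace $Tu$, the functional factors through the trace operator: if $Tu_1 = Tu_2$ then $\langle \mathbf{z}, u_1 \rangle_{\partial\Omega} = \langle \mathbf{z}, u_2 \rangle_{\partial\Omega}$ (this is exactly the well-definedness already established in the proof of Theorem~\ref{thm:bilinearform}). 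Therefore $\langle \mathbf{z}, \cdot \rangle_{\partial\Omega}$ descends to a well-defined functional $L_{\mathbf{z}}$ on the image of the trace operator $T$ inside $L^1(\partial\Omega, |\partial\Omega|_X)$, satisfying $|L_{\mathbf{z}}(h)| \leq C(\Omega) \| \mathbf{z} \|_{L^\infty(\Omega)} \| h \|_{L^1(\partial\Omega, |\partial\Omega|_X)}$ for every $h$ in that image.

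Next I would extend $L_{\mathbf{z}}$ to all of $L^1(\partial\Omega, |\partial\Omega|_X)$. By Theorem~\ref{thm:vittoneextension}, the trace operator $T: BV_X(\Omega) \to L^1(\partial\Omega, |\partial\Omega|_X)$ is \emph{surjective}, so in fact the image already is the full space and no Hahn--Banach extension is needed; every $h \in L^1(\partial\Omega, |\partial\Omega|_X)$ is the trace of some $u \in C^\infty(\Omega) \cap W^{1,1}_X(\Omega) \subset BV_X^c(\Omega)$, and one simply defines $L_{\mathbf{z}}(h) := \langle \mathbf{z}, u \rangle_{\partial\Omega}$ for any such $u$. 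Thus $L_{\mathbf{z}}$ is a bounded linear functional on $L^1(\partial\Omega, |\partial\Omega|_X)$ with operator norm at most $C(\Omega) \| \mathbf{z} \|_{L^\infty(\Omega)}$. By the Riesz representation theorem for the dual of $L^1$ (the measure $|\partial\Omega|_X$ being $\sigma$-finite, as $\partial\Omega$ is compact), there exists a unique $\gamma_X(\mathbf{z}) \in L^\infty(\partial\Omega, |\partial\Omega|_X)$ with
$$ L_{\mathbf{z}}(h) = \int_{\partial\Omega} h \, \gamma_X(\mathbf{z}) \, d|\partial\Omega|_X \qquad \text{for all } h \in L^1(\partial\Omega, |\partial\Omega|_X), $$
and $\| \gamma_X(\mathbf{z}) \|_{L^\infty(\partial\Omega, |\partial\Omega|_X)} = \| L_{\mathbf{z}} \| \leq C(\Omega) \| \mathbf{z} \|_{L^\infty(\Omega)}$. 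Taking $h = Tu$ yields the desired integral representation $\langle \mathbf{z}, u \rangle_{\partial\Omega} = \int_{\partial\Omega} Tu \, \gamma_X(\mathbf{z}) \, d|\partial\Omega|_X$ for all $u \in BV_X^c(\Omega)$.

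Linearity of $\mathbf{z} \mapsto \gamma_X(\mathbf{z})$ follows from bilinearity of the pairing together with uniqueness in the Riesz representation: the map $\mathbf{z} \mapsto L_{\mathbf{z}}$ is linear, hence so is its representative. The $X$-regular case gives $C(\Omega) = 1$ directly from the sharper bound in Theorem~\ref{thm:bilinearform}. Finally, the compatibility $\gamma_X(\mathbf{z})(x) = \mathbf{z} \cdot \nu_\Omega$ for $\mathbf{z} \in C^1(\overline{\Omega}, \mathbb{R}^m)$ is read off by comparing the two representations: for such $\mathbf{z}$, Theorem~\ref{thm:bilinearform} gives $\langle \mathbf{z}, u \rangle_{\partial\Omega} = \int_{\partial\Omega} Tu \, \mathbf{z} \cdot \nu_\Omega \, d|\partial\Omega|_X$, while the Riesz representation gives $\langle \mathbf{z}, u \rangle_{\partial\Omega} = \int_{\partial\Omega} Tu \, \gamma_X(\mathbf{z}) \, d|\partial\Omega|_X$; since $Tu$ ranges over all of $L^1(\partial\Omega, |\partial\Omega|_X)$ by surjectivity of $T$, the two $L^\infty$ densities must agree $|\partial\Omega|_X$-almost everywhere.

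The only genuinely delicate point is confirming surjectivity of the trace operator so that $L_{\mathbf{z}}$ is defined on the \emph{entire} space $L^1(\partial\Omega, |\partial\Omega|_X)$ rather than a proper (possibly non-closed) subspace; this is precisely what Theorem~\ref{thm:vittoneextension} provides, and it is what lets me invoke Riesz duality cleanly without an auxiliary Hahn--Banach step. Everything else is a routine application of $L^1$--$L^\infty$ duality and the uniqueness it supplies.
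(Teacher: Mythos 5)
Your proposal is correct and takes essentially the same route as the paper: fix $\mathbf{z}$, observe that the pairing of Theorem \ref{thm:bilinearform} factors through the trace operator, use the extension theorem (Theorem \ref{thm:vittoneextension}) to regard the resulting functional as defined and bounded on all of $L^1(\partial\Omega, |\partial\Omega|_X)$, and produce $\gamma_X(\mathbf{z}) \in L^\infty(\partial\Omega, |\partial\Omega|_X)$ by $L^1$--$L^\infty$ duality, with the norm bound, linearity, the $C^1$ compatibility, and the $X$-regular refinement all inherited exactly as in the paper's proof. One small caveat, shared with the paper's own write-up: the inclusion $C^\infty(\Omega) \cap W^{1,1}_X(\Omega) \subset BV_X^c(\Omega)$ you invoke is not literally true (such extensions of a general $h \in L^1$ need not be bounded), so one should either define the functional on the dense subspace of bounded traces via the $L^\infty$ refinement in Theorem \ref{thm:vittoneextension} and extend by continuity before applying duality, or work directly with the $W^{1,1}_X$ representatives; this is cosmetic and does not affect the argument.
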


\begin{proof}
Given $\mathbf{z} \in DM_X(\Omega)$, consider the linear functional $G: L^\infty(\partial\Omega, |\partial\Omega|_X) \rightarrow \mathbb{R}$ defined by the formula
$$ G(f) = \langle \mathbf{z}, u \rangle_{\partial\Omega},$$
where $u \in BV_X^c(\Omega)$ is such that $Tu = f$. By the previous Theorem, we have
$$ |G(f)| = |\langle \mathbf{z}, u \rangle_{\partial\Omega}| \leq C(\Omega) \| \mathbf{z} \|_{L^\infty(\Omega; \mathbb{R}^m)} \cdot \| f \|_{L^1(\partial\Omega, |\partial\Omega|_X)}.$$
As $G$ is a continuous functional on $L^1(\partial\Omega, |\partial\Omega|_X)$, there exists a function $\gamma_X(\mathbf{z}) \in L^\infty(\partial\Omega, |\partial\Omega|_X)$ with norm $C(\Omega) \| \mathbf{z} \|_{L^\infty(\Omega; \mathbb{R}^m)}$ such that
$$ G(u) = \int_{\partial\Omega} f \, \gamma_X(\mathbf{z}) \, d|\partial\Omega|_X.$$
If the domain was $X$-regular, then in the previous Theorem the constant in the estimate on $\langle \mathbf{z}, u \rangle_{\partial_\Omega}$ equals one.
\end{proof}

Now, we introduce the second pairing $(\mathbf{z}, Xu)$. From now on, we consider only Carnot groups; the reason for this is that left-invariant vector fields on Carnot groups satisfy $X^* = -X$, see \cite[Lemma 1.30]{VittPhD}. We will consider the following possible conditions: \\
(a) $u \in BV_X(\Omega) \cap L^\infty(\Omega), \mathbf{z} \in DM_X(\Omega), \mbox{div}_X(\mathbf{z}) \in L^1(\Omega; \mathbb{R}^m);$ \\
(b) $u \in BV_X(\Omega) \cap L^q(\Omega), \mathbf{z} \in DM_X(\Omega), \mbox{div}_X(\mathbf{z}) \in L^p(\Omega; \mathbb{R}^m), 1 < p \leq n, \frac{1}{p} + \frac{1}{q} = 1$; \\
(c) $u \in BV_X^c(\Omega), \mathbf{z} \in DM_X(\Omega).$

\begin{definition}
Suppose that we work with a Carnot group. Let $\mathbf{z}, u$ be such that one of the conditions (a),(b) holds for all open sets $A \subset \subset \Omega$. We define a linear functional $(\mathbf{z}, Xu): C_c^\infty(\Omega) \rightarrow \mathbb{R}$ as
$$ \langle (\mathbf{z}, Xu), \varphi \rangle = - \int_\Omega u \, \varphi \, \mbox{div}_X(\mathbf{z}) \, d\mathcal{L}^n - \int_\Omega u \, \mathbf{z} \cdot X\varphi \, d\mathcal{L}^n.$$
If $\mathbf{z}, u$ are such that (c) holds for all open sets $A \subset \subset \Omega$, we set
$$ \langle (\mathbf{z}, Xu), \varphi \rangle = - \int_\Omega u \, \varphi \, d\mbox{div}_X(\mathbf{z}) - \int_\Omega u \, \mathbf{z} \cdot X\varphi \, d\mathcal{L}^n.$$
\end{definition}

\begin{proposition}\label{prop:boundonanzelottipairing}
Under the assumptions above, the pairing $(\mathbf{z}, Xu)$ is a measure on $\Omega$. Moreover, $(\mathbf{z}, Xu) \ll |Xu|$ and we have that
$$ |(\mathbf{z}, Xu)| \leq \| \mathbf{z} \|_\infty |Xu|$$
as measures on $\Omega$.
\end{proposition}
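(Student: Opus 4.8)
The plan is to deduce all three assertions from the single estimate
\[
|\langle (\mathbf{z}, Xu), \varphi \rangle| \leq \| \mathbf{z} \|_\infty \int_\Omega |\varphi| \, d|Xu| \qquad \mbox{for every } \varphi \in C_c^\infty(\Omega).
\]
Granting this, the functional $(\mathbf{z}, Xu)$ is bounded on $C_c^\infty(\Omega)$ by $\| \mathbf{z} \|_\infty$ times the $L^1(\Omega, |Xu|)$-norm of $\varphi$; since $C_c^\infty(\Omega)$ is dense in $L^1(\Omega, |Xu|)$ (as $|Xu|$ is Radon), it extends to a continuous functional on $L^1(\Omega, |Xu|)$ of norm at most $\| \mathbf{z} \|_\infty$, and by the duality $L^1(\Omega,|Xu|)^* = L^\infty(\Omega, |Xu|)$ there is $\theta \in L^\infty(\Omega, |Xu|)$ with $\| \theta \|_\infty \leq \| \mathbf{z} \|_\infty$ and $(\mathbf{z}, Xu) = \theta \, |Xu|$. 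This exhibits $(\mathbf{z}, Xu)$ as a Radon measure, shows $(\mathbf{z}, Xu) \ll |Xu|$, and gives $|(\mathbf{z}, Xu)| = |\theta| \, |Xu| \leq \| \mathbf{z} \|_\infty |Xu|$, which is exactly the claim. I would work throughout on sets $A \subset\subset \Omega$ containing $\mathrm{supp}\,\varphi$, so the relevant quantities are finite.

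First I would verify the estimate when $u$ is smooth. For $u \in C^\infty(A) \cap BV_X(A)$ the product $u\varphi$ lies in $C_c^\infty(\Omega)$, so the defining property of $\mathrm{div}_X(\mathbf{z})$, together with $X_j^* = -X_j$ (valid on Carnot groups), yields the Leibniz-type identity
\[
\int_\Omega u \, \mathbf{z} \cdot X\varphi \, d\mathcal{L}^n = - \int_\Omega (u\varphi) \, \mathrm{div}_X(\mathbf{z}) - \int_\Omega \varphi \, \mathbf{z} \cdot Xu \, d\mathcal{L}^n,
\]
where the first term on the right is read against the measure $\mathrm{div}_X(\mathbf{z})$ in case (c) and against its $L^p$ density in cases (a),(b). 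Substituting this into the definition of $\langle (\mathbf{z}, Xu), \varphi\rangle$, the two copies of the divergence term cancel and one is left with $\langle (\mathbf{z}, Xu), \varphi \rangle = \int_\Omega \varphi \, \mathbf{z} \cdot Xu \, d\mathcal{L}^n$. In particular $(\mathbf{z}, Xu) = (\mathbf{z} \cdot Xu)\,\mathcal{L}^n$ for smooth $u$; since the measure $|Xu|$ then has Lebesgue density equal to the pointwise norm $|Xu(x)|$, the estimate follows from the pointwise bound $|\mathbf{z} \cdot Xu| \leq \| \mathbf{z} \|_\infty |Xu(x)|$.

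To reach general $u$ I would approximate by smooth functions. Fixing $A \subset\subset \Omega$, take $u_k \in C^\infty(A) \cap BV_X(A)$ converging to $u$ $X$-strictly (by group mollification, or via Proposition \ref{prop:vittoneapproximation}). The estimate holds for each $u_k$ by the previous step, so it suffices to pass to the limit in both sides of $|\langle (\mathbf{z}, Xu_k), \varphi \rangle| \leq \| \mathbf{z} \|_\infty \int_\Omega |\varphi| \, d|Xu_k|$. For the left-hand side I would pass to the limit directly in the defining formula: the term $\int u_k \, \mathbf{z} \cdot X\varphi$ converges because $\mathbf{z} \cdot X\varphi \in L^\infty$ has compact support and $u_k \to u$ in $L^1_{\mathrm{loc}}$, while the divergence term is handled according to the case — dominated convergence using the uniform bound $\|u_k\|_\infty \le \|u\|_\infty$ in case (a), Hölder's inequality with $L^q$-convergence in case (b), and uniform convergence of $u_k \to u$ on $\mathrm{supp}\,\varphi$ (available since $u$ is continuous) against the finite measure $\mathrm{div}_X(\mathbf{z})$ in case (c). For the right-hand side, $X$-strict convergence means $Xu_k \to Xu$ weakly$^*$ with $|Xu_k|(A) \to |Xu|(A)$, so by Reshetnyak's continuity theorem $\int |\varphi| \, d|Xu_k| \to \int |\varphi| \, d|Xu|$. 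This gives the estimate for $u$ and completes the argument.

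The main obstacle is precisely the limit passage in the divergence term on the left: because $\mathrm{div}_X(\mathbf{z})$ is only $L^p$ (cases (a),(b)) or merely a finite measure (case (c)), $L^1$-convergence of $u_k$ is insufficient, and one must match the mode of convergence of the approximating sequence to the integrability of $\mathrm{div}_X(\mathbf{z})$ — this is exactly why the three regimes are separated and why continuity of $u$ is imposed in (c). A secondary delicate point is the convergence of the total-variation integrals on the right, for which $X$-strict convergence (rather than mere weak$^*$ convergence) is essential.
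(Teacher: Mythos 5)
Your proof is correct and takes essentially the same route as the paper's: the identity $\int_\Omega \mbox{div}_X(u\,\varphi\,\mathbf{z})\,d\mathcal{L}^n = 0$ combined with $X^*=-X$ on Carnot groups to get $\langle(\mathbf{z},Xu),\varphi\rangle=\int_\Omega \varphi\,\mathbf{z}\cdot Xu\,d\mathcal{L}^n$ for smooth $u$, followed by $X$-strict approximation to reach general $u$. If anything, your write-up is more careful at the two points the paper treats tersely: you prove the localized bound $|\langle(\mathbf{z},Xu),\varphi\rangle|\leq\|\mathbf{z}\|_\infty\int_\Omega|\varphi|\,d|Xu|$ (the paper uses the cruder $\|\varphi\|_\infty\|\mathbf{z}\|_\infty\int_\Omega|Xu|$, leaving the localization to open subsets implicit), and you spell out how the divergence term passes to the limit separately in cases (a), (b) and (c), which the paper dismisses with ``taking the limit'' and ``the other case is similar.''
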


\begin{proof}
We will prove the Proposition under assumption (a) or (b); the other case is similar. Suppose first that $u \in C^\infty(\Omega)$. Consider the vector field $u \varphi \mathbf{z} \in L^\infty(\Omega; \mathbb{R}^m)$ with compact support in $\Omega$. Then, we have that
\begin{equation*}
0 = \int_\Omega \mbox{div}_X(u \, \varphi \, \mathbf{z}) \, d\mathcal{L}^n
= - \int_\Omega \sum_{j=1}^m X_j^*(u \, \varphi \, \mathbf{z}) d\mathcal{L}^n = - \int_\Omega \sum_{j=1}^m u \, \varphi \, X_j^*(\mathbf{z}_j) d\mathcal{L}^n + \end{equation*}
\begin{equation*}
- \int_\Omega \sum_{j=1}^m \varphi \, \mathbf{z}_j \, X_j^*(u) d\mathcal{L}^n - \int_\Omega \sum_{j=1}^m u \, \mathbf{z}_j \, X_j^*(\varphi) d\mathcal{L}^n = \int_\Omega u \, \varphi \, \mbox{div}_X(\mathbf{z}) d\mathcal{L}^n +
\end{equation*}
\begin{equation*}
- \int_\Omega \varphi \, \mathbf{z} \cdot X^* u \, d\mathcal{L}^n - \int_\Omega u \, \mathbf{z} \cdot X^* \varphi \, d\mathcal{L}^n.
\end{equation*}
As we work on a Carnot group, $X^* = -X$ (see \cite[Lemma 1.30]{VittPhD}); hence
$$ 0 = \int_\Omega u \, \varphi \, \mbox{div}_X(\mathbf{z}) d\mathcal{L}^n + \int_\Omega \varphi \, \mathbf{z} \cdot X u \, d\mathcal{L}^n + \int_\Omega u \, \mathbf{z} \cdot X \varphi \, d\mathcal{L}^n.$$
We rewrite this as
$$ \bigg| \langle (\mathbf{z}, Xu), \varphi \rangle \bigg| = \bigg| - \int_\Omega u \, \varphi \, \mbox{div}_X(\mathbf{z}) d\mathcal{L}^n - \int_\Omega u \, \mathbf{z} \cdot X \varphi \, d\mathcal{L}^n \bigg| = $$
$$ = \bigg| \int_\Omega \varphi \, \mathbf{z} \cdot X u \, d\mathcal{L}^n \bigg| \leq \| \varphi \|_\infty \| \mathbf{z} \|_\infty \int_\Omega |Xu| d\mathcal{L}^n.$$
Now, we drop the assumption of smoothness of $u$: let $u \in BV_X(\Omega)$. Take a sequence $u_j \in C^\infty(\Omega) \cap BV_X(\Omega)$ which converges $X$-strictly to $u$. Then we have
$$ \bigg| \langle (\mathbf{z}, Xu_j), \varphi \rangle \bigg| \leq \| \varphi \|_\infty \| \mathbf{z} \|_\infty \int_\Omega |Xu_j| d\mathcal{L}^n $$
and taking the limit we get 
$$ \bigg| \langle (\mathbf{z}, Xu), \varphi \rangle \bigg| \leq \| \varphi \|_\infty \| \mathbf{z} \|_\infty \int_\Omega |Xu|.$$
Hence, $(\mathbf{z}, Xu)$ is a measure on $\Omega$ which satisfies $|(\mathbf{z}, Xu)| \leq \| \mathbf{z} \|_\infty |Xu|$ as measures. In particular, it is absolutely continuous with respect to $|Xu|$.
\end{proof}

\begin{proposition}
Assume that $u, \mathbf{z}$ satisfy one of the assumptions (a)-(c). Let $u_j \in C^\infty(\Omega) \cap BV_X(\Omega)$ converge to $u$ as in Proposition \ref{prop:approximationpreservingtraces}. Then we have
$$ \int_\Omega (\mathbf{z}, Xu_j) \rightarrow \int_\Omega (\mathbf{z}, Xu).$$
\end{proposition}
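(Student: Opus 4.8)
The plan is to show that the signed measures $(\mathbf{z}, Xu_j)$ converge weakly-$*$ to $(\mathbf{z}, Xu)$ as measures on $\Omega$ and, crucially, that no mass escapes to the boundary, so that their \emph{total masses} converge. Beyond the defining formula for the pairing, the only structural input I need is the domination $|(\mathbf{z}, Xu)| \leq \|\mathbf{z}\|_\infty |Xu|$ from Proposition \ref{prop:boundonanzelottipairing}, together with the properties of the approximating sequence furnished by Proposition \ref{prop:approximationpreservingtraces} (convergence in $L^1(\Omega)$, the uniform $L^\infty$-bound when $u\in L^\infty$, and $X$-strict convergence).

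First I would establish the weak-$*$ convergence. Fix $\varphi \in C_c^\infty(\Omega)$; in cases (a) and (b) the defining formula reads
\begin{equation*}
\langle (\mathbf{z}, Xu_j), \varphi \rangle = - \int_\Omega u_j \, \varphi \, \mbox{div}_X(\mathbf{z}) \, d\mathcal{L}^n - \int_\Omega u_j \, \mathbf{z} \cdot X\varphi \, d\mathcal{L}^n.
\end{equation*}
Since $u_j \to u$ in $L^1(\Omega)$ and $\mathbf{z} \cdot X\varphi \in L^\infty(\Omega)$ has compact support, the second integral passes to the limit immediately. For the first integral one argues by cases: in (a) the sequence $u_j$ is uniformly bounded in $L^\infty$ and converges a.e.\ (along a subsequence), so dominated convergence against $\mbox{div}_X(\mathbf{z}) \in L^1(\Omega)$ applies; in (b) one uses that $u_j \rightharpoonup u$ weakly in $L^q$, which pairs with $\varphi\,\mbox{div}_X(\mathbf{z}) \in L^p$; in (c), where $\mbox{div}_X(\mathbf{z})$ is only a measure, one invokes a smooth approximation converging \emph{uniformly} on compact sets to the continuous function $u$. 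Thus $\langle (\mathbf{z}, Xu_j), \varphi \rangle \to \langle (\mathbf{z}, Xu), \varphi \rangle$, and since the masses $|(\mathbf{z}, Xu_j)|(\Omega) \leq \|\mathbf{z}\|_\infty |Xu_j|(\Omega)$ are uniformly bounded, the convergence extends to all $\varphi \in C_c(\Omega)$ by density.

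Next comes the tightness. Because $u_j \to u$ is $X$-strict, we have $Xu_j \rightharpoonup Xu$ weakly-$*$ together with $|Xu_j|(\Omega) \to |Xu|(\Omega)$, and hence $|Xu_j| \rightharpoonup |Xu|$ weakly-$*$ as nonnegative measures. Given $\eta > 0$, I would choose an open set $V \subset\subset \Omega$ with $|Xu|(\Omega \setminus V) < \eta$; combining lower semicontinuity on the open set $V$ with the convergence of total masses yields $\limsup_j |Xu_j|(\Omega \setminus V) \leq |Xu|(\Omega \setminus V) < \eta$. Through the domination by $\|\mathbf{z}\|_\infty |Xu_j|$ this transfers to uniform smallness of $|(\mathbf{z}, Xu_j)|(\Omega \setminus V)$. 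To conclude, pick $\varphi \in C_c^\infty(\Omega)$ with $0 \leq \varphi \leq 1$ and $\varphi \equiv 1$ on $\overline{V}$, and split $\int_\Omega (\mathbf{z}, Xu_j) = \langle (\mathbf{z}, Xu_j), \varphi \rangle + \int_\Omega (1-\varphi)\, d(\mathbf{z}, Xu_j)$; the tail term is at most $\|\mathbf{z}\|_\infty \eta$ uniformly for large $j$, and likewise for $u$, while the middle term converges by the weak-$*$ step. Letting $\eta \to 0$ gives the claim.

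I expect the tightness step to be the main obstacle: weak-$*$ convergence alone permits mass to concentrate at $\partial\Omega$ in the limit, and it is precisely the $X$-strict convergence — not mere $L^1$ or weak-$*$ convergence — that rules this out, through the equality of limiting total variations. A secondary point requiring care is the integrability bookkeeping in the weak-$*$ step, especially case (b), where one must justify the weak $L^q$ convergence of $u_j$ against $\mbox{div}_X(\mathbf{z}) \in L^p$, and case (c), where the divergence is merely a measure and one needs an approximation converging locally uniformly.
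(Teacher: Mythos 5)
Your proof is correct and follows essentially the same route as the paper's: a cutoff function splits the total mass into a compactly supported part, handled by convergence of $\langle (\mathbf{z}, Xu_j), \varphi \rangle$ for fixed $\varphi$, and a tail near $\partial\Omega$, controlled through the domination $|(\mathbf{z}, Xu_j)| \leq \|\mathbf{z}\|_\infty |Xu_j|$ combined with $X$-strict convergence. The only difference is that you spell out the two steps the paper merely asserts, namely the case-by-case verification of the convergence against fixed test functions under assumptions (a)--(c) and the lower-semicontinuity argument yielding $\limsup_j |Xu_j|(\Omega \setminus V) \leq |Xu|(\Omega \setminus V)$.
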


\begin{proof}
Fix $\varepsilon > 0$. Then choose an open set $A \subset \subset \Omega$ such that 
$$\int_{\Omega \backslash A} |Xu| < \varepsilon.$$
Let $g \in C_c^\infty(\Omega)$ be such that $0 \leq g \leq 1$ in $\Omega$ and $g \equiv 1$ in $A$. We write $1 = g + (1 - g)$ and estimate
$$ \bigg| \int_\Omega (\mathbf{z}, Xu_j) - \int_\Omega (\mathbf{z}, Xu) \bigg| \leq \bigg| \langle (\mathbf{z}, Xu_j), g \rangle -  \langle (\mathbf{z}, Xu), g \rangle  \bigg| + $$
$$ + \int_\Omega |(\mathbf{z}, Xu_j)| (1-g) + \int_\Omega |(\mathbf{z}, Xu)| (1-g).$$
But for any fixed $g \in C_c^\infty(\Omega)$ we have $\langle (\mathbf{z}, Xu_j), g \rangle \rightarrow \langle (\mathbf{z}, Xu), g \rangle$. Moreover, we have 
$$\int_\Omega (1-g) |(\mathbf{z}, Xu)| \leq \int_{\Omega \backslash A} |(\mathbf{z}, Xu)| \leq \| \mathbf{z} \|_\infty \int_{\Omega \backslash A} |Xu| < \varepsilon \| \mathbf{z} \|_\infty$$
and similarly
$$\limsup_{j \rightarrow \infty} \int_\Omega (1-g) |(\mathbf{z}, Xu_n)| \leq \limsup_{j \rightarrow \infty} \| \mathbf{z} \|_\infty \int_{\Omega \backslash A} |Xu_n| \leq \varepsilon \| \mathbf{z} \|_\infty ,$$
so the right hand side may be arbitrarily close to zero.
\end{proof}

We conclude  by introducing the Green's formula, which relates the measure $(\mathbf{z}, Xu)$ with the weak normal trace.

\begin{theorem}\label{thm:greenformulacarnot}
Let $\Omega$ be a bounded open set with $X-$Lipschitz boundary. Let $u, \mathbf{z}$ satisfy one of the conditions (a),(b). Then we have
\begin{equation}
\int_\Omega u \, \mbox{div}_X(\mathbf{z}) d\mathcal{L}^n + \int_\Omega (\mathbf{z}, Xu) = \int_{\partial\Omega} [\mathbf{z} \cdot \nu]_X \, u \, d|\partial\Omega|_X.
\end{equation}
If $u, \mathbf{z}$ satisfy condition (c), we have
\begin{equation}
\int_\Omega u \, d\mbox{div}_X(\mathbf{z}) + \int_\Omega (\mathbf{z}, Xu) = \int_{\partial\Omega} [\mathbf{z} \cdot \nu]_X \, u \, d|\partial\Omega|_X.
\end{equation}
\end{theorem}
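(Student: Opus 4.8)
The plan is to reduce everything to the smooth case and then pass to the limit along a trace-preserving approximation. First I would check the identity when $u$ is smooth. If $u \in C^\infty(\Omega) \cap BV_X(\Omega)$ belongs to $BV_X^c(\Omega)$ (which holds in case (a), and in case (c), because of the uniform $L^\infty$ bound from Proposition \ref{prop:approximationpreservingtraces}), then the computation in the proof of Proposition \ref{prop:boundonanzelottipairing} shows that the measure $(\mathbf{z}, Xu)$ is exactly $\mathbf{z} \cdot Xu \, d\mathcal{L}^n$. Consequently the left-hand side of the Green's formula becomes $\int_\Omega u \, \mathrm{div}_X(\mathbf{z}) \, d\mathcal{L}^n + \int_\Omega \mathbf{z} \cdot Xu \, d\mathcal{L}^n$, which is precisely the defining expression of the auxiliary pairing $\langle \mathbf{z}, u \rangle_{\partial\Omega}$. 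By the representation in Theorem \ref{thm:definitionofweaktrace}, this equals $\int_{\partial\Omega} Tu \, \gamma_X(\mathbf{z}) \, d|\partial\Omega|_X = \int_{\partial\Omega} [\mathbf{z} \cdot \nu]_X \, u \, d|\partial\Omega|_X$, so the formula holds for smooth $u$.

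Next I would take a general $u$ satisfying one of (a)--(c) and approximate it by a sequence $u_j \in C^\infty(\Omega) \cap BV_X(\Omega)$ converging $X$-strictly to $u$ with $Tu_j = Tu$, furnished by Proposition \ref{prop:approximationpreservingtraces} (which in the bounded case also provides the uniform bound $\|u_j\|_{L^\infty} \leq 3\|u\|_{L^\infty}$). Writing the already-established Green's formula for each $u_j$, there are three terms to control. The middle term converges, $\int_\Omega (\mathbf{z}, Xu_j) \to \int_\Omega (\mathbf{z}, Xu)$, directly by the preceding Proposition. The boundary term requires nothing at all: since the approximation preserves the trace, $\int_{\partial\Omega} [\mathbf{z} \cdot \nu]_X \, Tu_j \, d|\partial\Omega|_X = \int_{\partial\Omega} [\mathbf{z} \cdot \nu]_X \, Tu \, d|\partial\Omega|_X$ is constant in $j$.

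The remaining and genuinely delicate point is the convergence of the divergence term, and this is where the distinction between the hypotheses (a)--(c) enters and where I expect the main obstacle to lie, since the approximation only guarantees $L^1$ convergence plus $X$-strict convergence. In case (a), where $\mathrm{div}_X(\mathbf{z}) \in L^1(\Omega)$, I would combine the uniform $L^\infty$ bound with a.e. convergence along a subsequence and invoke dominated convergence to get $\int_\Omega u_j \, \mathrm{div}_X(\mathbf{z}) \, d\mathcal{L}^n \to \int_\Omega u \, \mathrm{div}_X(\mathbf{z}) \, d\mathcal{L}^n$. In case (b), where $\mathrm{div}_X(\mathbf{z}) \in L^p$ with $\tfrac1p + \tfrac1q = 1$, I would instead use that the $X$-strict convergence makes $u_j$ bounded in $BV_X(\Omega)$, hence bounded in $L^q(\Omega)$ by the Sobolev embedding associated with Ahlfors regularity; together with $L^1$ convergence this yields weak $L^q$ convergence along a subsequence, and testing against $\mathrm{div}_X(\mathbf{z}) \in L^p = (L^q)^*$ gives the desired limit. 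In case (c), where one integrates the continuous bounded $u$ against the finite measure $\mathrm{div}_X(\mathbf{z})$, one must upgrade the convergence to be compatible with the measure (for instance via the uniform $L^\infty$ bound and a.e. convergence, controlling the integrals against $|\mathrm{div}_X(\mathbf{z})|$); this case is the most technical and is precisely why the continuity of $u$ is built into hypothesis (c). Once the divergence term converges in each case, passing to the limit in the Green's formula for $u_j$ yields the identity for $u$, completing the proof.
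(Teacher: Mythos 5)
Your overall strategy coincides with the paper's: verify the formula for smooth functions via the construction of $\langle \mathbf{z}, u \rangle_{\partial\Omega}$ and its representation by $[\mathbf{z}\cdot\nu]_X$ (Theorems \ref{thm:bilinearform} and \ref{thm:definitionofweaktrace}), then pass to the limit along the trace-preserving, $X$-strictly convergent approximation of Proposition \ref{prop:approximationpreservingtraces}, using the preceding Proposition for the pairing term and the preserved trace for the boundary term. You are right --- and in fact more explicit than the paper, whose proof simply asserts the limit --- that the only delicate point is the convergence of $\int_\Omega u_j \, \mathrm{div}_X(\mathbf{z}) \, d\mathcal{L}^n$, and your case (a) argument (uniform $L^\infty$ bound, a.e. convergence on a subsequence, dominated convergence) is correct.

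However, your treatments of (b) and (c) contain genuine gaps. In case (b), the embedding for Carnot groups is $BV_X(\Omega) \hookrightarrow L^{Q/(Q-1)}(\Omega)$, where $Q = \sum_j j \dim(\mathfrak{g}_j) \geq n$ is the homogeneous dimension, while condition (b) forces $q = p/(p-1) \geq n/(n-1) \geq Q/(Q-1)$, generically with strict inequalities (in $\mathbb{H}^1$ one has $Q/(Q-1)=4/3$ but $q \geq 3/2$). On a bounded domain, a bound in $L^{Q/(Q-1)}$ does not yield a bound in the larger exponent $L^q$, so "bounded in $BV_X$ hence bounded in $L^q$ by Sobolev embedding" fails for every admissible $p$ in a non-abelian group, and already in $\mathbb{R}^n$ for every $p<n$. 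The repair (Anzellotti's) is to record that the mollifier-based approximation converges strongly in $L^q$ whenever $u \in L^q$; this property is not part of the statement of Proposition \ref{prop:approximationpreservingtraces} and moreover the trace-correcting terms $v_k$ are only controlled in $L^1$, so it genuinely needs to be added to the approximation lemma rather than deduced from it. In case (c), a.e. convergence is with respect to $\mathcal{L}^n$, whereas $\mathrm{div}_X(\mathbf{z})$ is merely a finite measure which may be singular with respect to $\mathcal{L}^n$ --- that is exactly what distinguishes (c) from (a) --- so no dominated convergence against $|\mathrm{div}_X(\mathbf{z})|$ can be run off Lebesgue-a.e. convergence. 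The mechanism that works is locally uniform convergence of the approximants (available because $u$ is continuous and the approximation is by mollification, with the corrections $v_k$ supported in shrinking neighbourhoods of $\partial\Omega$), combined with the fact that $|\mathrm{div}_X(\mathbf{z})|(\Omega \setminus \Omega_\delta) \to 0$ as $\delta \to 0$ by continuity from above of finite measures, plus the uniform $L^\infty$ bound. To be fair, the paper's own proof silently assumes the convergence of the divergence term in all three cases, so your proposal stumbles precisely where the paper is sketchiest; but the specific arguments you offer for (b) and (c) are not correct as written.
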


\begin{proof}
We will prove the Proposition under assumption (a) or (b); the other case is similar. Take a sequence $u_j \in C^\infty(\Omega) \cap BV_X(\Omega)$ that converges to $u$ as in Proposition \ref{prop:approximationpreservingtraces}. Then, by the previous Proposition and Theorem \ref{thm:vittonetraces}, we have
$$ \int_\Omega u \, \mbox{div}_X(\mathbf{z}) d\mathcal{L}^n + \int_\Omega (\mathbf{z}, Xu) = \lim_{j \rightarrow \infty} \bigg( \int_\Omega u_j \, \mbox{div}_X(\mathbf{z}) d\mathcal{L}^n + \int_\Omega (\mathbf{z}, Xu_j) \bigg) =$$
$$ = \lim_{j \rightarrow \infty} \int_{\partial\Omega} [\mathbf{z} \cdot \nu]_X \, u_j \, d|\partial\Omega|_X = \int_{\partial\Omega} [\mathbf{z} \cdot \nu]_X \, u \, d|\partial\Omega|_X. $$
\end{proof}

\subsection{Least gradient problem on a Carnot group} Now, we introduce three notions of solutions to the least gradient problem on a Carnot group: as minimizers of a variational functional with a rigid trace constraint, as minimizers to a relaxed variational functional and as solutions to the (local) 1-Laplace problem. We will later show that these three approaches are almost equivalent.

\begin{definition}\label{dfn:leastgradientcarnot}
We say that $u \in BV_X(\Omega)$ is a function of least gradient, if for any $v \in BV_X(\Omega)$ such that $Tv = 0$ we have
$$ \int_\Omega |Xu| \leq \int_\Omega |X(u+v)|.$$
\end{definition}

\begin{definition}\label{dfn:functionalcarnot}
Consider a functional $F_h: L^1(\Omega) \rightarrow \mathbb{R}$ defined by the formula
$$F_h(u) = \int_\Omega |Xu| + \int_{\partial\Omega} |h-u| \, d|\partial\Omega|_X$$
with $F_h(u) = +\infty$ if $u \in L^1(\Omega) \backslash BV_X(\Omega)$. We say that $u$ is a solution to the least gradient problem on $\Omega$ with boundary data $h \in L^1(\partial\Omega)$, if $u \in \mbox{argmin } F_h$.
\end{definition}

These definitions are analogues of Euclidean definitions on Carnot groups, see for instance \cite{MRL}. The main difference is that if instead of adopting Definition \ref{dfn:functionalcarnot} we require that $u$ is a function of least gradient such that $Tu=f$, then even in the Euclidean case we may lose existence of solutions for any $h \in L^1(\Omega)$. Moreover, as we will see, Definition \ref{dfn:functionalcarnot} has a direct relation to the Euler-Lagrange formulation of the problem presented below.

\begin{definition}\label{dfn:1laplacecarnot}
We say that $u \in BV_X(\Omega)$ is a solution of the 1-Laplace equation with Dirichlet boundary data on a Carnot group for boundary data $h \in L^1(\partial\Omega, |\partial\Omega|_X)$, if there exists a vector field $\mathbf{z} \in L^\infty(\Omega; \mathbb{R}^m)$ such that $\| \mathbf{z} \|_{L^\infty(\Omega; \mathbb{R}^m)} \leq 1$ and the following conditions hold:
\begin{equation}
\mbox{div}_X(\mathbf{z}) = 0 \qquad \mbox{as distributions in }\Omega ;
\end{equation}
\begin{equation}
(\mathbf{z}, Xu) = |Xu| \qquad \mbox{as measures in }\Omega;
\end{equation}
\begin{equation}
[\mathbf{z} \cdot \nu]_X \in \mbox{sign}(h-u) \qquad |\partial\Omega|_X-\mbox{a.e. on }\partial\Omega.
\end{equation}
\end{definition}

At this time, we will not prove existence of solutions to the Dirichlet problem for the 1-Laplace operator; we will do this later in Theorem \ref{thm:connectionwithlocal} using an approximation by nonlocal problems with range of the interaction going to zero.

\begin{proposition}
Assume that $\Omega$ is $X$-regular. Given $u \in BV_X(\Omega)$, consider the following conditions: \\
(i) $F_h(u) \leq F_h(v)$ for all $v \in  BV_X(\Omega)$; \\
(ii) $u$ is a solution of the 1-Laplace equation; \\
(iii) $u$ is a function of least gradient. \\
Then (ii) implies (i). If there exists a solution $\overline{u} \in BV_X(\Omega)$ in the sense of Definition \ref{dfn:1laplacecarnot}, then also (i) implies (ii). Furthermore, if $Tu = h$, then (i) and (iii) are equivalent.
\end{proposition}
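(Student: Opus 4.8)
The plan is to prove the three implications by exploiting the Green's formula from Theorem \ref{thm:greenformulacarnot} together with the bound $|(\mathbf{z}, Xu)| \leq \|\mathbf{z}\|_\infty |Xu|$ from Proposition \ref{prop:boundonanzelottipairing}, which are the two workhorses connecting the pairing, the total variation, and the boundary trace.

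\textbf{Step 1: (ii) implies (i).} Suppose $u$ solves the 1-Laplace equation with vector field $\mathbf{z}$. First I would compute $F_h(u)$ using the defining properties: since $(\mathbf{z}, Xu) = |Xu|$ as measures and $[\mathbf{z}\cdot\nu]_X \in \mbox{sign}(h-u)$, applying Green's formula (with $\mbox{div}_X(\mathbf{z}) = 0$) gives
$$ \int_\Omega |Xu| = \int_\Omega (\mathbf{z}, Xu) = \int_{\partial\Omega} [\mathbf{z}\cdot\nu]_X \, u \, d|\partial\Omega|_X. $$
Since $[\mathbf{z}\cdot\nu]_X \in \mbox{sign}(h-u)$ means $[\mathbf{z}\cdot\nu]_X (h-u) = |h-u|$ pointwise on $\partial\Omega$, one rearranges to show $F_h(u) = \int_{\partial\Omega} [\mathbf{z}\cdot\nu]_X \, h \, d|\partial\Omega|_X$. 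For an arbitrary competitor $v \in BV_X(\Omega)$, Green's formula applied to $v$ yields $\int_\Omega (\mathbf{z}, Xv) = \int_{\partial\Omega} [\mathbf{z}\cdot\nu]_X \, v \, d|\partial\Omega|_X$; combining $\int_\Omega (\mathbf{z}, Xv) \leq \|\mathbf{z}\|_\infty \int_\Omega |Xv| \leq \int_\Omega |Xv|$ with the estimate $|[\mathbf{z}\cdot\nu]_X| \leq 1$ (from $X$-regularity, so the constant is one) and the triangle inequality on $\partial\Omega$ should give $F_h(u) \leq F_h(v)$.

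\textbf{Step 2: (i) implies (ii), given existence.} Here I would use the hypothesized solution $\overline{u}$ and the fact that by Step 1 it also minimizes $F_h$. Since both $u$ and $\overline{u}$ minimize the convex functional $F_h$, they achieve the same value, and I expect to extract the vector field $\mathbf{z}$ associated to $\overline{u}$ and verify it certifies $u$ as well. The mechanism is that equality $F_h(u) = F_h(\overline{u})$ forces the inequalities in Step 1 to be equalities for $u$ when tested against $\mathbf{z}$; tracking the equality cases in $(\mathbf{z}, Xu) \leq |Xu|$ and in the boundary term recovers the three conditions of Definition \ref{dfn:1laplacecarnot} for the pair $(u, \mathbf{z})$.

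\textbf{Step 3: (i) $\Leftrightarrow$ (iii) when $Tu = h$.} When the trace equals the boundary data, the penalty term $\int_{\partial\Omega}|h - u| \, d|\partial\Omega|_X$ vanishes, so $F_h(u) = \int_\Omega |Xu|$. For any $v$ with $Tv = 0$, the competitor $u + v$ has trace $h$, hence $F_h(u+v) = \int_\Omega |X(u+v)|$, and minimality of $F_h$ among all competitors restricted to this class is exactly the least-gradient condition of Definition \ref{dfn:leastgradientcarnot}. The converse requires checking that a least gradient function with trace $h$ beats every $BV_X$ competitor (not only those with the same trace); I expect this to follow by comparing against the competitor obtained by modifying the boundary values, using Theorem \ref{thm:vittonesplittingthegradient} to account for the jump $(Tu - Tv)\nu_\Omega |\partial\Omega|_X$ contributed at $\partial\Omega$.

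\textbf{Main obstacle.} The delicate part will be Step 2, the reverse implication (i) $\Rightarrow$ (ii). Unlike the forward direction, which is a direct computation, recovering the certifying vector field $\mathbf{z}$ and verifying the measure equality $(\mathbf{z}, Xu) = |Xu|$ requires the equality cases in the Anzelotti inequality to propagate correctly; the subtlety is that $\mathbf{z}$ is constructed for $\overline{u}$, and one must argue it simultaneously works for $u$, which is where the convexity of $F_h$ and the structure of the pairing must be used carefully rather than through a routine estimate.
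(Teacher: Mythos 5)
Your Steps 1 and 2 are correct and follow essentially the same route as the paper: (ii) $\Rightarrow$ (i) is Green's formula (Theorem \ref{thm:greenformulacarnot}) with $\mbox{div}_X(\mathbf{z})=0$, the bound $(\mathbf{z},Xv)\leq|Xv|$, and the fact that $X$-regularity gives $|[\mathbf{z}\cdot\nu]_X|\leq 1$ with constant one; and (i) $\Rightarrow$ (ii) is exactly the equality-case tracking you describe, testing $u$ against the vector field $\mathbf{z}$ constructed for $\overline{u}$ and using that both integrands $|Xu|-(\mathbf{z},Xu)$ and $|h-u|-[\mathbf{z}\cdot\nu]_X(h-u)$ are nonnegative. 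Your forward direction of Step 3 ((i) $\Rightarrow$ (iii)) is also the paper's argument verbatim. Incidentally, the obstacle you flag as the delicate one (Step 2) is in fact routine once Step 1 is in place; the real difficulty sits elsewhere.

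The genuine gap is in the converse of Step 3, (iii) $\Rightarrow$ (i), where you cite Theorem \ref{thm:vittonesplittingthegradient} as the tool to ``account for the jump.'' The splitting theorem only tells you how the total variation of a glued function decomposes across $\partial\Omega$; it does not produce the competitor you need. To conclude $F_h(u)\leq F_h(w)$ for an arbitrary $w\in BV_X(\Omega)$, you must construct from $w$ a function with trace exactly $h$ whose total variation exceeds $\int_\Omega|Xw|$ by at most (essentially) $\int_{\partial\Omega}|Tw-h|\,d|\partial\Omega|_X$, and then invoke the least gradient property of $u$ against it. The paper does this via Theorem \ref{thm:vittoneextension}: for an $X$-\emph{regular} domain one can find $\widetilde{w}\in C^\infty(\Omega)\cap BV_X(\Omega)$ with $T\widetilde{w}=Tw-h$ and
$$\int_\Omega|X\widetilde{w}|\leq(1+\varepsilon)\int_{\partial\Omega}|Tw-h|\,d|\partial\Omega|_X,$$
so that $w-\widetilde{w}$ has trace $h$ and
$$F_h(u)=\int_\Omega|Xu|\leq\int_\Omega|X(w-\widetilde{w})|\leq\int_\Omega|Xw|+(1+\varepsilon)\int_{\partial\Omega}|w-h|\,d|\partial\Omega|_X,$$
and letting $\varepsilon\to 0$ gives the claim. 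The sharp constant $1+\varepsilon$ is the whole point and is precisely where $X$-regularity enters this implication: for a merely $X$-Lipschitz domain the extension theorem only yields the constant $C(\Omega)$, and the resulting estimate $F_h(u)\leq\int_\Omega|Xw|+C(\Omega)\int_{\partial\Omega}|w-h|\,d|\partial\Omega|_X$ does not imply $F_h(u)\leq F_h(w)$. The splitting theorem contains no such quantitative extension statement, so your sketch as written cannot close this implication.
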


\begin{proof}
(ii) $\Rightarrow$ (i). Given $w \in BV(\Omega)$, we apply the Green's formula (Theorem \ref{thm:greenformulacarnot}) to see that
$$ 0 = \int_\Omega (w - u) \, \mbox{div}_X(\mathbf{z}) \, d\mathcal{L}^n = - \int_\Omega (\mathbf{z}, Xw) + \int_\Omega (\mathbf{z}, Xu) + \int_{\partial\Omega} [\mathbf{z} \cdot \nu]_X \, (w-u) \, d|\partial\Omega|_X.$$
We use the equality above in the definition of $F_h$ to see that
$$ F_h(u) = \int_\Omega |Xu| + \int_{\partial\Omega} |h-u| \, d|\partial\Omega|_X = \int_\Omega (\mathbf{z}, Xu) + \int_{\partial\Omega} [\mathbf{z} \cdot \nu]_X (h-u) \, d|\partial\Omega|_X = $$
$$ =\int_\Omega (\mathbf{z}, Xw) + \int_{\partial\Omega} [\mathbf{z} \cdot \nu]_X (h-w) \, d|\partial\Omega|_X \leq \int_\Omega |Xw| + \int_{\partial\Omega} |h-w| \, d|\partial\Omega|_X = F_h(w),$$
where in the last estimate we used $X$-regularity of $\Omega$, as we estimate $|[\mathbf{z} \cdot \nu]_X| \leq \| \mathbf{z} \|_\infty \leq 1$; by Theorem \ref{thm:bilinearform} this estimate without an additional constant holds only for $X$-regular domains. \\
\\
(i) $\Rightarrow$ (ii). We assume that there exists a solution $\overline{u} \in BV_X(\Omega)$ in the sense of Definition \ref{dfn:1laplacecarnot}; in particular, take the vector field $\mathbf{z} \in L^\infty(\mathbb{R}^n; \mathbb{R}^m)$ subordinate to $\overline{u}$. Using Green's formula (Theorem \ref{thm:greenformulacarnot}), we have that 
$$ 0 = \int_\Omega (\overline{u} - u) \, \mbox{div}_X(\mathbf{z}) \, d\mathcal{L}^n = - \int_\Omega (\mathbf{z}, X\overline{u}) + \int_\Omega (\mathbf{z}, Xu) + \int_{\partial\Omega} [\mathbf{z} \cdot \nu]_X \, (\overline{u}-u) \, d|\partial\Omega|_X.$$
By the minimality of $u$ and the above equality, we have
$$ F_h(u) \leq F_h(\overline{u}) = \int_\Omega |X\overline{u}| + \int_{\partial\Omega} |h-\overline{u}| \, d|\partial\Omega|_X = \int_\Omega (\mathbf{z}, X\overline{u}) + $$
$$ + \int_{\partial\Omega} [\mathbf{z} \cdot \nu]_X (h-\overline{u}) \, d|\partial\Omega|_X = \int_\Omega (\mathbf{z}, Xu) + \int_{\partial\Omega} [\mathbf{z} \cdot \nu]_X (h-u) \, d|\partial\Omega|_X \leq $$
$$ \leq \int_\Omega |Xu| + \int_{\partial\Omega} |h-u| \, d|\partial\Omega|_X  = F_h(u),$$
where in the inequality we used $X$-regularity of $\Omega$ in the same way as in the converse implication. In particular, this inequality is in fact an equality, so
$$ \int_\Omega \bigg( |Xu| - (\mathbf{z}, Xu) \bigg) + \int_{\partial\Omega} \bigg(|h-u| - [\mathbf{z} \cdot \nu]_X (h-u) \bigg) \, d|\partial\Omega|_X = 0.$$
Since $\Omega$ is $X$-regular, both integrands are nonnegative, hence $(\mathbf{z}, Xu) = |Xu|$ as measures and $|h-u| = [\mathbf{z} \cdot \nu]_X (h-u)$ almost everywhere with respect to $|\partial\Omega|_X$, so $u$ satisfies Definition \ref{dfn:1laplacecarnot}. \\
\\
(i) $\Rightarrow$ (iii). Since $Tu = h$, by minimality of $u$ for $v \in BV_X(\Omega)$ with $Tv = 0$ we have
$$ \int_\Omega |Xu| = F_h(u) \leq F_h(u+v) = \int_\Omega |X(u+v)|.$$
(iii) $\Rightarrow$ (i). Given $w \in BV_X(\Omega)$, we have to prove that $F_h(u) \leq F_h(w)$. As $\Omega$ is $X$-regular, we fix $\varepsilon > 0$ and use Theorem \ref{thm:vittoneextension} to find a function $\widetilde{w} \in C^\infty(\Omega) \cap BV_X(\Omega)$ such that
$$ \widetilde{w} = w - h \quad \mbox{on } \partial\Omega;$$
$$ \int_\Omega |X\widetilde{w}| \leq (1 + \varepsilon) \int_{\partial\Omega} |w-h| \, d|\partial\Omega|_X.$$
Consider the function $w - \widetilde{w}$ with trace $h$. Since $u$ is a function of least gradient with trace $h$, we have
$$ F_h(u) = \int_\Omega |Xu| \leq \int_\Omega |X(w - \widetilde{w})| \leq \int_\Omega |Xw| + \int_\Omega |X\widetilde{w}| \leq $$
$$ \leq \int_\Omega |Xw| + (1 + \varepsilon) \int_{\partial\Omega} |w-h| \, d|\partial\Omega|_X = F_h(w) + \varepsilon \int_{\partial\Omega} |w-h| \, d|\partial\Omega|_X.$$
As $\varepsilon$ is arbitrary, we see that $u \in \mbox{argmin} \, F_h$.
\end{proof}

\section{Convergence of the nonlocal least gradient functions}\label{sec:4nonlocalconvergence}

In this Section, we will prove convergence of minimizers of nonlocal least gradient problems in a Carnot group to a solution of the local least gradient problem. For this, we need a few preliminary results. We start with an explicit calculation of the limit of a certain difference quotient, which we will use to prove Theorem \ref{thm:amrtincarnotgroups} - a result on the structure of the derivative of a weak limit of a sequence of functions with uniformly bounded nonlocal gradients. Then, we will use Theorem \ref{thm:amrtincarnotgroups} to prove the aforementioned convergence of minimisers.

\subsection{Structure of the limit}

One of the steps in the proof of Theorem \ref{thm:amrtincarnotgroups} will be calculating on a Carnot group the following object:
$$ \frac{d}{d\varepsilon} \bigg|_{\varepsilon = 0} \varphi(x \delta_\varepsilon(z^{-1})).$$
In the Euclidean case, this reduces to
$$ \frac{d}{d\varepsilon} \bigg|_{\varepsilon = 0} \varphi(x - \varepsilon z) = - z \cdot \nabla \varphi(x).$$
We want to perform a similar calculation in a Carnot group $\mathbb{G}$. Firstly, we present two examples: in the Heisenberg group $\mathbb{H}^1$ and in the Engel group $\mathbb{E}^4$, which will serve as a toy problem. Then, we prove the appropriate result in full generality.

\begin{example}\label{ex:heisenbergderivative}
For the definition of the operations in the Heisenberg group $\mathbb{H}^1$ and the horizontal vector fields defining it, see Example \ref{ex:heisenberggroup}. We use the group law in the Heisenberg group to explicitly compute the derivative: we write $x= (x_1, x_2, x_3)$, $z= (z_1, z_2, z_3)$ and calculate
$$ \frac{d}{d\varepsilon}\bigg|_{\varepsilon = 0} \, \varphi(x \delta_\varepsilon(z^{-1})) = \frac{d}{d\varepsilon}\bigg|_{\varepsilon = 0} \, \varphi((x_1, x_2, x_3) \circ (-\varepsilon z_1, - \varepsilon z_2, - \varepsilon^2 z_3)) = $$
$$ = \frac{d}{d\varepsilon}\bigg|_{\varepsilon = 0} \, \varphi((x_1 - \varepsilon z_1, x_2 - \varepsilon z_2, x_3 - \varepsilon^2 z_3 + \frac12 \varepsilon (- x_1 z_2 + x_2 z_1))) = $$
$$ = -z_1 \cdot \partial_1 \varphi(x) - z_2 \cdot \partial_2 \varphi(x) - \frac12 x_1 z_2 \cdot \partial_3 \varphi(x) + \frac12 x_2 z_1 \cdot \partial_3 \varphi(x) = $$
$$ = - z_1 (\partial_1 - \frac12 x_2 \cdot \partial_3) \varphi(x) - z_2 (\partial_2 + \frac12 x_1 \cdot \partial_3) \varphi(x) = - z_1 \cdot X_1 \varphi(x) - z_2 \cdot X_2 \varphi(x) = - \langle z, X\varphi \rangle.$$
\end{example}

\begin{example}\label{ex:engelderivative}

The Engel group $\mathbb{E}^4$ is the space $\mathbb{R}^4$ endowed with the group operation
$$ (x_1, x_2, x_3, x_4) \circ (y_1, y_2, y_3, y_4) = (x_1 + y_1, x_2 + y_2, x_3 + y_3 + \frac12 (x_1 y_2 - x_2 y_1),$$
$$x_4 + y_4 + \frac12 (x_1 y_3 - x_3 y_1) + \frac12 (x_2 y_3 - x_3 y_2) + \frac{1}{12}(x_1 + x_2 - y_1 - y_2)(x_1 y_2 - x_2 y_1)),$$
dilations
$$ \delta_\lambda((x_1, x_2, x_3, x_4)) = (\lambda x_1, \lambda x_2, \lambda^2 x_3, \lambda^3 x_4)$$
and with horizontal vector fields
$$ X_1 = \partial_1 - \frac{x_2}{2} \partial_3 - (\frac{x_3}{2} + \frac{x_2^2}{12} + \frac{x_1 x_2}{12}) \partial_4, \quad X_2 = \partial_2 + \frac{x_1}{2} \partial_3 - (\frac{x_3}{2} - \frac{x_2^2}{12} - \frac{x_1 x_2}{12}) \partial_4.$$
It is a Carnot group of step 3. In the Engel group, the calculation looks as follows:
$$ \frac{d}{d\varepsilon}\bigg|_{\varepsilon = 0} (x \delta_\varepsilon(z^{-1})) = \frac{d}{d\varepsilon}\bigg|_{\varepsilon = 0} ((x_1, x_2, x_3, x_4) ((- \varepsilon z_1, -\varepsilon z_2, -\varepsilon^2 z_3, -\varepsilon^3 z_4)).$$
Notice that due to the graded structure of $\mathbb{E}^4$ no matter what are the exact coefficients on the last two coordinates, they will disappear in the limit $\varepsilon \rightarrow 0$; the only input remaining after taking the limit are terms linear in $\varepsilon$, i.e. the ones coming from the horizontal directions $z_1, z_2$. We perform the multiplication and see that
$$ \frac{d}{d\varepsilon}\bigg|_{\varepsilon = 0} (x \delta_\varepsilon(z^{-1})) = $$
$$ = \frac{d}{d\varepsilon}\bigg|_{\varepsilon = 0} ((x_1 - \varepsilon z_1 , x_2 -\varepsilon z_2, x_3 + \frac12 \varepsilon (x_2 z_1 - x_1 z_2) + O(\varepsilon^2), $$
$$ x_4 + \frac12 \varepsilon (x_3 z_1 + x_3 z_2) + \frac{1}{12} \varepsilon (x_1 x_2 z_1 + x_2^2 z_1 - x_1 x_2 z_2 - x_1^2 z_2) + O(\varepsilon^2)).$$
We pass to the limit and see that
$$ \frac{d}{d\varepsilon}\bigg|_{\varepsilon = 0} (x \delta_\varepsilon(z^{-1})) = - z_1 (\partial_1 - \frac{x_2}{2} \partial_3 - (\frac{x_3}{2} + \frac{x_2^2}{12} + \frac{x_1 x_2}{12}) \partial_4) \varphi(x) + $$
$$ - z_2 (\partial_2 + \frac{x_1}{2} \partial_3 - (\frac{x_3}{2} - \frac{x_2^2}{12} - \frac{x_1 x_2}{12}) \partial_4) \varphi(x) = -z_1 X_1 \varphi(x) - z_2 X_2 \varphi(x) = - \langle z, X\varphi \rangle.$$
\end{example}

In general, the main problem is that given horizontal vector fields $X_1,...,X_m$, we can use the Baker-Campbell-Hausdorff formula to obtain the group law in $\mathbb{G}$, but the result may be quite complicated. However, as we saw in Example \ref{ex:engelderivative}, even though the group operation is such that the result of $x \delta_\varepsilon(z^{-1})$ becomes tiresome to compute, the graded structure of the group forces some terms (including the coordinates $z_i$ corresponding to nonhorizontal coordinates) to disappear in the limit. In fact, due to properties of exponential coordinates listed in Proposition \ref{prop:propertiesexponentialcoordinates} we know enough to perform the calculation in the general case. In particular, if $x=(x_1,...,x_n)$ is a representation of $x$ in exponential coordinates of a Carnot group $\mathbb{G}$, then in these coordinates the group multiplication is represented as a polynomial function and the inverse is simply $x^{-1} = -x$. We will use the above facts to prove the following Lemma.

\begin{lemma}\label{lem:horizontalgradient}
Suppose that $\mathbb{G}$ is a Carnot group represented in exponential coordinates. Suppose that $i=1,...,m$ are the horizontal directions. Then
$$ \frac{d}{d\varepsilon} \bigg|_{\varepsilon =  0} \varphi(x \delta_\varepsilon(z^{-1})) = - \sum_{j=1}^m z_i \, X_i \varphi(x) = - \langle z, X \varphi \rangle.$$
\end{lemma}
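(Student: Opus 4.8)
The plan is to reduce the computation to the chain rule, using three facts collected in Proposition~\ref{prop:propertiesexponentialcoordinates}: that $z^{-1}=-z$, that the dilations act diagonally as powers of $\varepsilon$ according to the stratification, and that the group law is a polynomial map in exponential coordinates.

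First I would make the curve $\varepsilon \mapsto \delta_\varepsilon(z^{-1})$ explicit. Writing $d_i$ for the stratum degree of the $i$-th coordinate (so $d_i=1$ for the horizontal indices $i=1,\dots,m$ and $d_i\ge 2$ otherwise), points 1 and 2 of Proposition~\ref{prop:propertiesexponentialcoordinates} give
$$ y(\varepsilon) := \delta_\varepsilon(z^{-1}) = \big(-\varepsilon^{d_1} z_1, \dots, -\varepsilon^{d_n} z_n\big).$$
In particular $y(0)=0$ is the neutral element, so $x\circ y(0)=x$, and differentiating each component at $\varepsilon=0$ yields
$$ \dot y_i(0) = \twopartdef{-z_i}{i=1,\dots,m,}{0}{d_i\ge 2,}$$
since every nonhorizontal component carries a factor $\varepsilon^{d_i}$ with $d_i\ge 2$, whose derivative vanishes at the origin. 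This is the precise mechanism behind the disappearance of the nonhorizontal terms observed in Examples~\ref{ex:heisenbergderivative} and \ref{ex:engelderivative}.

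Next I would apply the chain rule. Denote by $P(x,y)=x\circ y$ the group law; by point 3 and the finiteness of the Baker--Campbell--Hausdorff series on a nilpotent algebra, $P$ is a polynomial, hence smooth, so $\varepsilon\mapsto\varphi(x\circ y(\varepsilon))$ is differentiable and
$$ \frac{d}{d\varepsilon}\bigg|_{\varepsilon=0}\varphi(x\circ y(\varepsilon)) = \sum_{k=1}^n \partial_k\varphi(x)\,\frac{d}{d\varepsilon}\bigg|_{\varepsilon=0} P_k(x,y(\varepsilon)) = \sum_{k=1}^n \partial_k\varphi(x) \sum_{j=1}^n \frac{\partial P_k}{\partial y_j}(x,0)\,\dot y_j(0).$$
By the previous step only the horizontal indices $j=1,\dots,m$ survive, each contributing $\dot y_j(0)=-z_j$, so the right-hand side equals $-\sum_{i=1}^m z_i \sum_{k=1}^n \frac{\partial P_k}{\partial y_i}(x,0)\,\partial_k\varphi(x)$.

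Finally I would recognize the inner sum as the horizontal vector field. In exponential coordinates the left-invariant field $X_i$ is the left translate of $\partial_i|_0$, i.e. $X_i|_x = (dL_x)_0(\partial_i|_0)$; since $L_x(y)=P(x,y)$, this reads in coordinates as $X_i\varphi(x)=\sum_{k=1}^n \frac{\partial P_k}{\partial y_i}(x,0)\,\partial_k\varphi(x)$. Substituting gives exactly $-\sum_{i=1}^m z_i\, X_i\varphi(x)=-\langle z, X\varphi\rangle$, as claimed. The one genuinely substantive point — and the step I would be most careful about — is this identification of $\frac{\partial P_k}{\partial y_i}(x,0)$ with the coefficients of $X_i$: it is precisely the statement that differentiating $\varphi(x\circ y)$ in the $y_i$-direction at $y=0$ produces the left-invariant derivative $X_i\varphi(x)$, which is the defining property of the adapted basis $(X_1,\dots,X_n)$ of left-invariant fields. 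Everything else is bookkeeping transparent from the listed properties of exponential coordinates.
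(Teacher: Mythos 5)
Your proof is correct and follows essentially the same route as the paper's: the dilation scaling kills the nonhorizontal components at first order, the polynomial group law is differentiated, and the resulting operator $\sum_{k}\frac{\partial P_k}{\partial y_j}(x,0)\,\partial_k$ is identified with $X_j$ via left invariance. Your chain-rule/$(dL_x)_0$ phrasing is just a cleaner packaging of the paper's explicit expansion with $W_k^j(x)$ and $O(\varepsilon^2)$ remainders, including its final step, which computes $X_j\varphi(x)=\frac{d}{d\varepsilon}\big|_{\varepsilon=0}\varphi(x\circ\exp(\varepsilon X_j))$ --- the coordinate version of your identification.
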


\begin{proof}
Recall that by Proposition \ref{prop:propertiesexponentialcoordinates} the group multiplication is a polynomial function. In coordinates, notice that for $y$ of the form $y = (y_1,...,y_m,0,...,0)$ it has the form
$$ (x_1,...,x_n) \circ (y_1,...,y_m,0,...,0) = (x_1 + y_1, ..., x_m + y_m, $$
$$x_{m+1} + \sum_{j=1}^m W_{m+1}^j(x) y_j + R_{m+1}(x,y), ..., x_{n} + \sum_{j=1}^m W_{n}^j(x) y_j + R_{n}(x,y)),$$
i.e. it is a linear function on the horizontal coordinates (by Proposition \ref{prop:propertiesexponentialcoordinates}) and on all the other coordinates we decide to write the multiplication in the form as above, where $W_{k}^j(x)$ are polynomials of $x_1,...,x_n$ and $R_k(x,y)$ are polynomials of $x_1,...,x_n,y_1,...,y_m$ involving only quadratic (or higher) terms in $y_1,...,y_m$. In particular, we have
$$ (x_1,...,x_n) \circ (-\varepsilon z_1,...,-\varepsilon z_m,0,...,0) = (x_1 - \varepsilon z_1, ..., x_m - \varepsilon z_m, $$
$$x_{m+1} - \varepsilon \sum_{j=1}^m W_{m+1}^j(x) z_j + O(\varepsilon^2), ..., x_{n} - \varepsilon \sum_{j=1}^m W_{n}^j(x) z_j + O(\varepsilon^2)).$$
Now, we know enough structure of the group operation to compute the desired derivative. We have
$$ \frac{d}{d\varepsilon} \bigg|_{\varepsilon = 0} \varphi(x \delta_\varepsilon(z^{-1})) =$$
$$=\frac{d}{d\varepsilon} \bigg|_{\varepsilon = 0} \varphi((x_1,...,x_m,x_{m+1}...,x_n) \delta_\varepsilon((-z_1,...,-z_m,-z_{m+1},...,-z_{n})) = $$
$$=\frac{d}{d\varepsilon} \bigg|_{\varepsilon = 0} \varphi((x_1,...,x_m,x_{m+1}...,x_n) (-\varepsilon z_1,...,- \varepsilon z_m,O(\varepsilon^2),...,O(\varepsilon^2))).$$
In particular, all terms involving $z_{m+1},...,z_n$ will disappear in the limit, so
$$\frac{d}{d\varepsilon} \bigg|_{\varepsilon = 0} \varphi(x \delta_\varepsilon(z^{-1})) = \frac{d}{d\varepsilon} \bigg|_{\varepsilon = 0} \varphi((x_1,...,x_m,x_{m+1}...,x_n) (-\varepsilon z_1,...,- \varepsilon z_m,0,...,0)) =$$
$$ = \frac{d}{d\varepsilon} \bigg|_{\varepsilon = 0} \varphi((x_1 - \varepsilon z_1, ..., x_m - \varepsilon z_m, $$
$$ x_{m+1} - \varepsilon \sum_{j=1}^m W_{m+1}^j(x) z_j + O(\varepsilon^2), ..., x_{n} - \varepsilon \sum_{j=1}^m W_{n}^j(x) z_j + O(\varepsilon^2))) = $$
$$ = - \sum_{j=1}^m z_j (\partial_j + W_{m+1}^j(x) \partial_{m+1} + ... + W_n^j(x) \partial_n) \varphi(x).$$
In particular, the result is a linear function of $z_1,...,z_m$ - all terms involving $z_i$ with a higher index will disappear in the limit. As the group multiplication is a polynomial function, also all terms involving higher powers of $z_i$ will have at least $\varepsilon^2$ in front of them and disappear in the limit. We will see that the differential operator appearing above reduces to the horizontal gradient. To see, let us calculate for $j = 1,...,m$
$$ X_j \varphi(x) = \frac{d}{d\varepsilon} \bigg|_{\varepsilon = 0} \varphi(x \circ \exp(\varepsilon X_j)) = \frac{d}{d\varepsilon} \bigg|_{\varepsilon = 0} \varphi((x_1,...,x_n) \circ (0,...,0,\varepsilon,0,...,0)) =$$
$$ = \frac{d}{d\varepsilon} \bigg|_{\varepsilon = 0} \varphi((x_1,...,x_{j-1},x_j + \varepsilon, x_{j+1},..., x_m, $$
$$ x_{m+1} + \varepsilon W_{m+1}^j(x) + O(\varepsilon^2), ..., x_n + \varepsilon W_{n}^j(x) + O(\varepsilon^2))) = $$
$$ = (\partial_j + W_{m+1}^j(x) \partial_{m+1} + ... + W_{n}^j(x) \partial_{n}) \varphi(x).$$
Hence, we plug it in the calculation above and get
$$ \frac{d}{d\varepsilon} \bigg|_{\varepsilon = 0} \varphi(x \delta_\varepsilon(z^{-1})) = - \sum_{j=1}^m z_j X_j\varphi(x)$$
and the Lemma is proved. We recall that $\langle z, X\varphi \rangle := \sum_{j=1}^m z_j X_j\varphi(x)$; we again stress that this is not the usual scalar product, as $z$ has $n$ coordinates and $X\varphi$ has $m$ coordinates.
\end{proof}

Now, we want to prove a result analogous to the first point of \cite[Theorem 6.11]{AMRT}. We use the notation so that each function is prolonged by zero outside of $D$.

\begin{theorem}\label{thm:amrtincarnotgroups}
Let $\mathbb{G}$ be a Carnot group represented in exponential coordinates. Assume that $f_\varepsilon \rightharpoonup f$ in $L^q(D)$ for $q \geq 1$ and that the sequence $f_\varepsilon$ satisfies
\begin{equation}\label{eq:uniformestimate} 
\int_D \dashint_{U(x,\varepsilon)} |f_\varepsilon(y) - f_\varepsilon(x)|^q \, d\mathcal{L}^n(x) \, d\mathcal{L}^n(y) \leq M \varepsilon^q.
\end{equation}
If $q > 1$, then $f \in W^{1,q}_X(D)$. If $q = 1$, then $f \in BV_X(D)$. Moreover, on a subsequence $($still denoted by $f_\varepsilon)$ we have
\begin{equation}\label{eq:structureofthederivative}
\chi_{U(0,1)}(z) \, \chi_D(x \delta_\varepsilon(z)) \, \frac{f_\varepsilon(x \delta_\varepsilon(z)) - f_\varepsilon(x)}{\varepsilon} \rightharpoonup \chi_{U(0,1)}(z) \, \cdot \, \langle z, Xf \rangle
\end{equation}
weakly as functions in $L^q(D \times \mathbb{R}^n)$ $($if $q > 1)$ or as measures $($if $q = 1)$.
\end{theorem}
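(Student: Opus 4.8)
The plan is to rewrite hypothesis \eqref{eq:uniformestimate} as a uniform bound on the rescaled difference quotients and then identify their weak limit using Lemma \ref{lem:horizontalgradient}. First I would perform the change of variables $y = x\delta_\varepsilon(z)$ in the inner integral. Since $y \in U(x,\varepsilon)$ means $\|x^{-1}y\| < \varepsilon$, writing $x^{-1}y = \delta_\varepsilon(z)$ gives $\|\delta_\varepsilon(z)\| = \varepsilon\|z\|$, so $y$ ranges over $U(x,\varepsilon)$ exactly when $z \in U(0,1)$; moreover the dilation contributes a Jacobian $\varepsilon^Q$ (Proposition \ref{prop:propertiesexponentialcoordinates}), which cancels the normalising factor $\mathcal{L}^n(U(x,\varepsilon)) = \varepsilon^Q\mathcal{L}^n(U(0,1))$. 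Dividing \eqref{eq:uniformestimate} by $\varepsilon^q$ then yields a bound of the form $\int_D\int_{U(0,1)} |g_\varepsilon(x,z)|^q\,dz\,dx \le CM$, where
$$ g_\varepsilon(x,z) := \chi_{U(0,1)}(z)\,\chi_D(x\delta_\varepsilon(z))\,\frac{f_\varepsilon(x\delta_\varepsilon(z)) - f_\varepsilon(x)}{\varepsilon}. $$
For $q > 1$, reflexivity of $L^q$ provides a subsequence with $g_\varepsilon \rightharpoonup g$ weakly in $L^q(D\times\mathbb{R}^n)$; for $q = 1$ I would instead regard the $g_\varepsilon$ as uniformly bounded measures and extract a weak-$*$ convergent subsequence $g_\varepsilon \rightharpoonup g$.

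Next I would compute the action of $g$ on product test functions $\varphi\otimes\psi$ with $\varphi \in C_c^\infty(D)$ and $\psi$ supported in $U(0,1)$. Fixing $z$ and substituting $w = x\delta_\varepsilon(z)$ in the term containing $f_\varepsilon(x\delta_\varepsilon(z))$ --- which preserves $\mathcal{L}^n$ by right-invariance (Proposition \ref{prop:propertiesexponentialcoordinates}) --- moves the translation onto the test function; since $\varphi$ is compactly supported, for $\varepsilon$ small (uniformly in $z \in U(0,1)$) the truncating characteristic functions equal $1$ on the relevant set, so that
$$ \int_D g_\varepsilon(x,z)\,\varphi(x)\,dx = \int_D f_\varepsilon(x)\,\frac{\varphi(x\delta_\varepsilon(z^{-1})) - \varphi(x)}{\varepsilon}\,dx. $$
By Lemma \ref{lem:horizontalgradient} and a Taylor expansion, $\varepsilon^{-1}(\varphi(x\delta_\varepsilon(z^{-1})) - \varphi(x)) = -\langle z, X\varphi(x)\rangle + O(\varepsilon)$, the remainder being uniform in $x \in \mathrm{supp}\,\varphi$ and $z \in U(0,1)$ (the group law is polynomial, so $\varepsilon \mapsto \varphi(x\delta_\varepsilon(z^{-1}))$ is smooth with bounded second derivative there). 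Using boundedness of $\|f_\varepsilon\|_{L^q}$ to absorb the $O(\varepsilon)$ term and then $f_\varepsilon \rightharpoonup f$, I obtain for all such $\varphi,\psi$
$$ \langle g, \varphi\otimes\psi\rangle = -\int_{U(0,1)} \psi(z)\int_D f(x)\,\langle z, X\varphi(x)\rangle\,dx\,dz. $$

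To extract the regularity of $f$ I would exploit the symmetry of the box ball. Testing this identity with $\psi(z) = z_k\chi_{U(0,1)}(z)$ for a horizontal index $k \in \{1,\dots,m\}$ and using that $U(0,1) = (-1,1)^n$ is symmetric under each reflection $z_j \mapsto -z_j$, all cross terms vanish and $\int_{U(0,1)} z_jz_k\,dz = c\,\delta_{jk}$ with $c = \tfrac{2^n}{3} > 0$. Hence the object $T_k := \int_{U(0,1)} g(\cdot,z)\,z_k\,dz$ satisfies $\langle T_k,\varphi\rangle = -c\int_D f\,X_k\varphi\,dx$ for every $\varphi \in C_c^\infty(D)$. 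By construction $T_k$ is represented by an $L^q(D)$ function (for $q>1$, via Minkowski's integral inequality) or by a finite measure (for $q=1$, since $|z_k|\le 1$ on $U(0,1)$ and $g$ is finite); therefore the distributional horizontal derivative $X_k f = \tfrac1c T_k$ is of the required type, giving $f \in W^{1,q}_X(D)$ when $q > 1$ and $f \in BV_X(D)$ when $q = 1$. With $Xf$ now a genuine measure or function, integration by parts (legitimate since $X_j^* = -X_j$ on a Carnot group) recasts the identity of the previous paragraph as $\langle g,\varphi\otimes\psi\rangle = \langle \chi_{U(0,1)}\langle z, Xf\rangle,\, \varphi\otimes\psi\rangle$; density of the products $\varphi\otimes\psi$ then yields \eqref{eq:structureofthederivative}.

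The crux of the argument is twofold. The genuinely essential structural input is the reflection symmetry of the box ball $U(0,1)$ in the horizontal directions, which diagonalises the matrix $\big(\int_{U(0,1)} z_jz_k\,dz\big)_{j,k\le m}$ and lets me isolate each $X_k f$ separately --- this is precisely where the box distance is more convenient than the CC distance. The main technical obstacle is the passage to the limit in the second paragraph: I must guarantee that the Taylor remainder is $O(\varepsilon)$ uniformly in both $x$ and $z$, and carefully justify discarding the domain-truncating characteristic functions for small $\varepsilon$ using the compact support of $\varphi$. Finally, for $q = 1$ some care is needed because $z_k\chi_{U(0,1)}(z)$ is not continuous; I would handle this by approximating $\chi_{U(0,1)}$ from inside by continuous cutoffs and controlling the resulting error by the mass of $g$ near $\partial U(0,1)$.
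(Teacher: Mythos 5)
Your proposal follows the same overall strategy as the paper's proof: rescale by dilations and Ahlfors regularity to turn \eqref{eq:uniformestimate} into a uniform $L^q$ bound for the quotients $g_\varepsilon$ on the fixed domain $D\times U(0,1)$, extract a weak ($q>1$) or weak-$*$ ($q=1$) limit, integrate by parts discretely against product test functions $\varphi\otimes\psi$, compute the limit of the quotient of $\varphi$ via Lemma \ref{lem:horizontalgradient}, and use the reflection symmetry of the box ball to isolate the components $X_kf$. The one organizational difference is that you run the moment argument (testing with $\psi(z)=z_k\chi_{U(0,1)}(z)$, using $\int_{U(0,1)}z_jz_k\,d\mathcal{L}^n = c\,\delta_{jk}$) uniformly for both values of $q$, whereas the paper handles $q>1$ by deriving the identity pointwise in $z$ and choosing $z=se_i$ (no symmetry needed there), and reserves the symmetry argument, combined with the disintegration $\mu=\nu\otimes\mu_x$, for $q=1$. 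Your unified route is perfectly legitimate, and your uniform Taylor-remainder justification of the limit passage is correct.

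There is, however, a genuine soft spot in your $q=1$ case. You only test against $\varphi\otimes\psi$ with $\psi$ supported in the \emph{open} ball $U(0,1)$, and your last step invokes ``density of the products.'' Such products are dense only in $C_c(D\times U(0,1))$: a weak-$*$ limit point $\mu$ of the measures $g_\varepsilon\,\mathcal{L}^n$ is supported in $D\times\overline{U(0,1)}$ and could a priori carry singular mass on $D\times\partial U(0,1)$, which these test functions cannot see; so \eqref{eq:structureofthederivative}, an identity between measures on $D\times\mathbb{R}^n$, is not fully established. Relatedly, your plan to ``control the resulting error by the mass of $g$ near $\partial U(0,1)$'' cannot work as stated, because the uniform bound $M$ gives no smallness of mass on thin shells (dominated convergence does make your inner approximation define a finite measure $T_k$, so the conclusion $f\in BV_X(D)$ survives, but the identification of $\mu$ does not). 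The fix is exactly what the paper does: never restrict the support of $\psi$ --- the discrete integration by parts holds for every $\psi\in C_c^\infty(\mathbb{R}^n)$ with the factor $\chi_{U(0,1)}(z)$ simply kept inside the integrand --- so the limit identity pairs $\mu$ against all of $C_c^\infty(D\times\mathbb{R}^n)$ and pins it down completely, including the absence of mass on the sphere; and for the moments one pairs with the smooth function $\widetilde{\psi}(z)\,z_k$, where $\widetilde{\psi}\equiv 1$ on $\overline{U(0,1)}$, instead of approximating the discontinuous $z_k\chi_{U(0,1)}$. Your $q>1$ half is unaffected, since there $g\in L^q$ and $\partial U(0,1)$ is Lebesgue-null.
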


\begin{proof}
We start by rewriting the estimate \eqref{eq:uniformestimate} using dilations in the group $\mathbb{G}$, so that we have an estimate on a fixed domain and not on a set changing with $\varepsilon$. To this end, we will utilise the group structure of $\mathbb{G}$, the invariance of the Lebesgue measure, rescaling using the dilation $\delta_\lambda$ and the behaviour of the Lebesgue measure with respect to this scaling.

We rewrite equation \eqref{eq:uniformestimate} as
\begin{equation}\label{eq:uniformestimate2} 
\int_D \int_{U(x,\varepsilon)} \frac{1}{\mathcal{L}^n(U(x,\varepsilon))} \bigg|\frac{f_\varepsilon(y) - f_\varepsilon(x)}{\varepsilon}\bigg|^q \, d\mathcal{L}^n(x) \, d\mathcal{L}^n(y) \leq M.
\end{equation}
Recall that by the Ahlfors regularity of $\mathcal{L}^n$ we have that $\mathcal{L}^n(U(x,\varepsilon)) = C \varepsilon^Q$, where $C = \mathcal{L}^n(U(0,1))$ is the measure of the unit ball and $Q$ is the homogenous dimension of $\mathbb{G}$. We also notice that if $y \in U(x, \varepsilon)$, then $x^{-1} y \in U(0, \varepsilon)$. Furthermore, if we set $z = \delta_{\varepsilon^{-1}}(x^{-1}y)$, then $z \in U(0,1)$. We will use this change of variables, so that $y = x \delta_\varepsilon(z)$ and $d\mathcal{L}^n(y) = \varepsilon^Q d\mathcal{L}^n(z)$. Hence, equation \eqref{eq:uniformestimate2} takes form
\begin{equation}\label{eq:uniformestimate3} 
\int_D \int_{U(0,1)} \frac{1}{C \varepsilon^Q} \chi_D(x \delta_\varepsilon(z)) \bigg|\frac{f_\varepsilon(x \delta_\varepsilon(z)) - f_\varepsilon(x)}{\varepsilon}\bigg|^q \, \varepsilon^Q d\mathcal{L}^n(x) \, d\mathcal{L}^n(z) \leq M.
\end{equation}
The factors $\varepsilon^Q$ cancel out; moving the constant $C$ (which does not depend on $\varepsilon$) into $M$ and representing the integration over $U(0,1)$ as a characteristic function, we obtain that
\begin{equation}\label{eq:uniformestimate4} 
\int_D \int_{\mathbb{R}^n} \chi_{U(0,1)}(z) \,\chi_D(x \delta_\varepsilon(z)) \, \bigg|\frac{f_\varepsilon(x \delta_\varepsilon(z)) - f_\varepsilon(x)}{\varepsilon}\bigg|^q \,  d\mathcal{L}^n(x) \, d\mathcal{L}^n(z) \leq M.
\end{equation}
As the integrand is uniformly bounded in $L^q(D \times \mathbb{R}^n)$, up to a subsequence we have
\begin{equation}
\chi_{U(0,1)}(z) \, \chi_D(x \delta_\varepsilon(z)) \, \frac{f_\varepsilon(x \delta_\varepsilon(z)) - f_\varepsilon(x)}{\varepsilon} \rightharpoonup \chi_{U(0,1)}(z) \, g(x,z)
\end{equation}
weakly in $L^q(D \times \mathbb{R}^n)$ (if $q > 1$) or
\begin{equation}
\chi_{U(0,1)}(z) \, \chi_D(x \delta_\varepsilon(z)) \, \frac{f_\varepsilon(x \delta_\varepsilon(z)) - f_\varepsilon(x)}{\varepsilon} \rightharpoonup \mu(x,z)
\end{equation}
weakly in $\mathcal{M}(D \times \mathbb{R}^n)$ (if $q = 1$). We want to obtain a representation of the function $g$ (or the measure $\mu$) in terms of the partial derivatives $X_i$ of $f$, which would show that these are functions in $L^q(D)$ (if $q > 1$), so that $f \in W^{1,q}_X(D)$, or bounded Radon measures (if $q = 1$), so that $f \in BV_X(D)$. 

We multiply the expression above by a smooth test function with separated variables: let $\varphi \in \mathcal{D}(D)$ and $\psi \in \mathcal{D}(\mathbb{R}^n)$, then for sufficiently small $\varepsilon$ we may move the differential quotient from $f_\varepsilon$ onto $\varphi$ and we have
\begin{equation}\label{eq:discreteintegrationbyparts1} 
\int_D \int_{\mathbb{R}^n} \chi_{U(0,1)}(z) \,\chi_D(x \delta_\varepsilon(z)) \, \frac{f_\varepsilon(x \delta_\varepsilon(z)) - f_\varepsilon(x)}{\varepsilon} \, \varphi(x) \, \psi(z) \, d\mathcal{L}^n(x) \, d\mathcal{L}^n(z) =
\end{equation}
\begin{equation*} 
= \int_{\mbox{supp}(\varphi)} \int_{\mathbb{R}^n} \chi_{U(0,1)}(z) \, \frac{f_\varepsilon(x \delta_\varepsilon(z)) - f_\varepsilon(x)}{\varepsilon} \, \varphi(x) \, \psi(z) \, d\mathcal{L}^n(x) \, d\mathcal{L}^n(z) =
\end{equation*}
\begin{equation*} 
= \int_{\mathbb{R}^n} \chi_{U(0,1)}(z) \, \psi(z) \, \bigg( \int_{D} \frac{f_\varepsilon(x \delta_\varepsilon(z)) - f_\varepsilon(x)}{\varepsilon} \, \varphi(x) \, d\mathcal{L}^n(x) \bigg) \, d\mathcal{L}^n(z) =
\end{equation*}
\begin{equation*} 
= - \int_{\mathbb{R}^n} \chi_{U(0,1)}(z) \, \psi(z) \, \bigg( \int_{D} f_\varepsilon(x) \frac{\varphi(x) - \varphi(x \delta_\varepsilon(z^{-1}))}{\varepsilon} \, d\mathcal{L}^n(x) \bigg) \, d\mathcal{L}^n(z).
\end{equation*}
We want to pass to the limit with $\varepsilon \rightarrow 0$. Firstly, we will investigate the differential quotient involving $\varphi$; once we establish the form of its limit and its continuity, we may pass with $f_\varepsilon$ to the limit using the assumption of weak convergence. We need to calculate the directional derivative of $\varphi$ in the direction $z$. We notice that
$$ \lim_{\varepsilon \rightarrow 0} \frac{\varphi(x) - \varphi(x \delta_\varepsilon(z^{-1}))}{\varepsilon} = - \frac{d}{d\varepsilon}\bigg|_{\varepsilon = 0} \, \varphi(x \delta_\varepsilon(z^{-1}))$$
whenever the limit is defined. By Lemma \ref{lem:horizontalgradient} we have that
$$ \frac{d}{d\varepsilon} \bigg|_{\varepsilon =  0} \varphi(x \delta_\varepsilon(z^{-1})) = - \langle z, X \varphi \rangle := - \sum_{j=1}^m z_i \, X_i \varphi(x).$$
This limit is precisely $z$ multiplied by the horizontal gradient of $\varphi$ (with zero on the non-horizontal coordinates). 

Suppose that $q > 1$. We pass to the limit in \eqref{eq:discreteintegrationbyparts1} to obtain
\begin{equation}
\int_{D \times \mathbb{R}^n} \chi_{U(0,1)}(z) \, g(x,z) \, \varphi(x) \, \psi(z) \, d\mathcal{L}^n(x) \, d\mathcal{L}^n(z) = 
\end{equation}
\begin{equation}
= - \int_{D \times \mathbb{R}^n} \chi_{U(0,1)}(z) \, f(x) \, \langle z, X\varphi \rangle \, \psi(z) \,  d\mathcal{L}^n(x)\,  d\mathcal{L}^n(z).
\end{equation}
By choosing appropriately the functions $\psi \in \mathcal{D}(\mathbb{R}^n)$, we see that
\begin{equation}
\int_{D} g(x,z) \, \varphi(x) \, d\mathcal{L}^n(x) = - \int_{D} f(x) \, \langle z, X\varphi \rangle \, d\mathcal{L}^n(x) \qquad \mbox{for all } z \in U(0,1).
\end{equation}
We recall that the adjoint operator of a left-invariant vector field $X$ is $-X$ (\cite[Lemma 1.30]{VittPhD}). Hence, on the right hand side we may move the horizontal gradient onto $f$ to obtain
\begin{equation}
\int_{D} g(x,z) \, \varphi(x) \, d\mathcal{L}^n(x) = \int_{D} \varphi(x) \, \langle z, Xf \rangle \, d\mathcal{L}^n(x) \qquad \mbox{for all } z \in U(0,1),
\end{equation}
in particular, by choosing $z = s e_i$ for sufficiently small $s$ we see that the components $X_i f$ are functions in $L^q(D)$, so $f \in W^{1,q}(D)$; moreover, $g = \langle z, Xf \rangle$, which proves \eqref{eq:structureofthederivative}.

Now, suppose that $q = 1$. We pass to the limit in \eqref{eq:discreteintegrationbyparts1} to obtain
\begin{equation}\label{eq:discreteintegrationbyparts2}
\int_{D \times \mathbb{R}^n} \varphi(x) \, \psi(z) \, d\mu(x,z) = - \int_{D \times \mathbb{R}^n} \chi_{U(0,1)}(z) \, \psi(z) \, \langle z, X\varphi \rangle \, f(x) d\mathcal{L}^n(x)\,  d\mathcal{L}^n(z).
\end{equation}
Again, we use the fact that the adjoint operator of a left-invariant vector field $X$ is $-X$. Hence, on the right hand side we may move the horizontal gradient onto $f$ to obtain
\begin{equation}\label{eq:discreteintegrationbyparts3}
\int_{D \times \mathbb{R}^n} \varphi(x) \, \psi(z) \, d\mu(x,z) = \int_{D \times \mathbb{R}^n} \chi_{U(0,1)}(z) \, \psi(z) \, \langle z, Xf \rangle \, \varphi(x) d\mathcal{L}^n(x)\,  d\mathcal{L}^n(z).
\end{equation}
By the disintegration theorem, we may write $\mu = \nu \otimes \mu_x$, where $\nu \in \mathcal{M}(D)$ and $\mu_x \in \mathcal{P}(\mathbb{R}^3)$ for $\nu$-almost all $x$. We obtain
\begin{equation}\label{eq:discreteintegrationbyparts4}
\int_{D} \bigg( \int_{\mathbb{R}^n} \psi(z) \, d\mu_x(z) \bigg) \, \varphi(x) \, d\nu(x) =
\end{equation}
\begin{equation*}
= \int_D \bigg(\sum_{i=1}^m \int_{\mathbb{R}^n} \chi_{U(0,1)}(z) \, \psi(z) \, z_i \, X_i f \,  d\mathcal{L}^n(z) \bigg) \, \varphi(x) \, d\mathcal{L}^n(x).
\end{equation*}
Hence, in the sense of measures as functionals on the space of continuous functions, we have that
\begin{equation}\label{eq:discreteintegrationbyparts5}
\bigg( \int_{\mathbb{R}^n} \psi(z) \, d\mu_x(z) \bigg) \nu = \sum_{i=1}^m \bigg( \int_{\mathbb{R}^n} \chi_{U(0,1)}(z) \, \psi(z) \, z_i \, d\mathcal{L}^n(z) \bigg) X_i f.
\end{equation}
Let us take $\widetilde{\psi} \in \mathcal{D}(\mathbb{R}^3)$ be a function such that $\widetilde{\psi} \equiv 1$ in $U(0,1)$. For $i = 1, ..., m$, we take $\psi(z) = \widetilde{\psi}(z) z_i$. Since the unit ball in the box distance in $\mathbb{G}$ is symmetric in the horizontal directions, for $i,j = 1,...,m$ such that $i \neq j$ we have
\begin{equation*}
\int_{\mathbb{R}^n} \chi_{U(0,1)} (z) \, z_i \, z_j \, \widetilde{\psi}(z) \, d\mathcal{L}^n(z) = \int_{\mathbb{R}^n} \chi_{U(0,1)} (z) \, z_i \, z_j \, d\mathcal{L}^n(z) = 0.
\end{equation*}
Then equation \eqref{eq:discreteintegrationbyparts5} takes form
\begin{equation}\label{eq:discreteintegrationbyparts6}
\bigg( \int_{\mathbb{R}^n} \widetilde{\psi}(z) \, z_i \, d\mu_x(z) \bigg) \nu = \bigg( \int_{\mathbb{R}^n} \chi_{U(0,1)}(z) \, z_i^2 \, d\mathcal{L}^n(z) \bigg) X_i f.
\end{equation}
The right hand side is $X_i f$ multiplied by a positive number; the left hand side is a bounded Radon measure. Hence the horizontal gradient of $f$ is a bounded Radon measure, so $f \in BV_X(D)$. Moreover, we see that
\begin{equation}\label{eq:structureofmu}
\mu(x,z) = \sum_{i=1}^m X_i f(x) \, \chi_{U(0,1)}(z) \, z_i \, \mathcal{L}^n(z),
\end{equation}
which proves \eqref{eq:structureofthederivative}.
\end{proof}

\subsection{Convergence of nonlocal least gradient functions}

Let us rewrite the definition of the solution to the nonlocal least gradient problem in the setting of Carnot groups. Denote by $\Omega_\varepsilon$ the set
$$ \Omega_\varepsilon = \bigg\{ x \in X: d_{cc}(x,\Omega) < \varepsilon \bigg\}.$$
Notice that for each $\varepsilon \in (0,1)$ we have $\Omega_\varepsilon \subset \Omega_1$; we will use $\Omega_1$ as a domain on which we will prove the necessary uniform estimates. Now, denote by $u_\psi$ the function
$$ u_\psi (x) = \twopartdef{u(x)}{x \in \Omega,}{\psi(x)}{x \in X \backslash \Omega.}$$
Now, we recall Definition \ref{Defi.1.var} and rewrite it in this setting:

\begin{definition}\label{dfn:nonlocalincarnotgroups} 
{\rm Let $\psi\in L^1(\Omega_\varepsilon \backslash \Omega)$. We say that $u_\varepsilon \in L^1(\Omega)$ is a solution to the nonlocal least gradient problem for the $\varepsilon$-step random walk, if there exists $g_\varepsilon \in L^\infty(\Omega_\varepsilon \times \Omega_\varepsilon)$ with
$\|g_\varepsilon \|_\infty\leq 1$ verifying
\begin{equation}\label{lasim.carnot}
g_\varepsilon(x,y)=-g_\varepsilon(y,x) \qquad \mbox{almost everywhere in } \Omega_\varepsilon \times \Omega_\varepsilon,
\end{equation}
\begin{equation}\label{1-lapla2.var.carnot}
g_\varepsilon(x,y)\in \mbox{sign}((u_\varepsilon)_\psi(y)-(u_\varepsilon)_\psi(x)) \qquad \mbox{almost everywhere in } \Omega_\varepsilon \times \Omega_\varepsilon,
\end{equation}
\begin{equation}\label{1-lapla.var.carnot}
-\int_{U(x,\varepsilon)} g_\varepsilon(x,y) \,d\mathcal{L}^n(y)= 0 \qquad \mbox{for almost every } x\in \Omega.
\end{equation}
}
\end{definition}

\begin{theorem}\label{thm:connectionwithlocal}
Suppose that $\Omega$ is an $X$-regular bounded domain in $\mathbb{G}$. Let $\psi \in BV_X(\mathbb{G}) \cap L^\infty(\mathbb{G})$. Let $u_\varepsilon$ be a sequence of solutions to the nonlocal least gradient problem corresponding to boundary data $\psi$ in the sense of Definition \ref{dfn:nonlocalincarnotgroups}. Then, on a subsequence, we have $u_\varepsilon \rightharpoonup u$ in $L^1(\Omega)$, where $u$ is a solution of the (local) least gradient problem on $\Omega$ with boundary data $h = T^- \psi$ in the sense of Definition \ref{dfn:1laplacecarnot}. Here, $T^- \psi$ denotes the one-sided trace of $\psi$ from $\mathbb{G} \backslash \overline{\Omega}$.
\end{theorem}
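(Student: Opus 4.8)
The plan is to pass to the limit at the level of the Euler--Lagrange formulation: out of the nonlocal vector fields $g_\varepsilon$ of Definition \ref{dfn:nonlocalincarnotgroups} I would manufacture an Anzelotti field $\mathbf z$, and then check, one by one, the three conditions of Definition \ref{dfn:1laplacecarnot}.

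\emph{Compactness and the limit function.} Since $u_\varepsilon$ minimises $\mathcal J_\psi$ and $\psi|_\Omega$ is admissible, $\mathcal J_\psi(u_\varepsilon)\le\mathcal J_\psi(\psi|_\Omega)$, and the total variation estimate applied to the fixed function $\psi\in BV_X(\mathbb G)$ bounds the right-hand side by $M\varepsilon$. Rewritten on the fixed domain $\Omega_1$, this is exactly hypothesis \eqref{eq:uniformestimate} with $q=1$, $D=\Omega_1$ and $f_\varepsilon=(u_\varepsilon)_\psi$. Truncating competitors shows $\|u_\varepsilon\|_{L^\infty}\le\|\psi\|_{L^\infty}$, so the sequence is equiintegrable and $u_\varepsilon\rightharpoonup u$ in $L^1(\Omega)$ along a subsequence; Theorem \ref{thm:amrtincarnotgroups} then gives $u\in BV_X(\Omega)$ together with the weak convergence of the rescaled difference quotients to the measure \eqref{eq:structureofmu}. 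It is essential to work on $\Omega_1$ rather than $\Omega$: by Theorem \ref{thm:vittonesplittingthegradient} the gradient $Xu_\psi$ carries the singular term $(T^+u-T^-\psi)\,\nu_\Omega\,|\partial\Omega|_X$ on $\partial\Omega$, and this is the mechanism by which the boundary datum $h=T^-\psi$ enters the problem.

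\emph{Construction of $\mathbf z$ and the divergence condition.} Writing $G_\varepsilon(x,z)=g_\varepsilon(x,x\delta_\varepsilon(z))$, the bound $\|g_\varepsilon\|_\infty\le1$ lets me extract a weak-$*$ limit $G_\varepsilon\rightharpoonup G$ in $L^\infty(\Omega\times U(0,1))$, and the natural candidate is $\mathbf z_i(x)=\kappa^{-1}\int_{U(0,1)}G(x,z)\,z_i\,d\mathcal L^n(z)$ for $i=1,\dots,m$, with $\kappa=\int_{U(0,1)}z_1^2\,d\mathcal L^n(z)$ the normalising constant from the proof of Theorem \ref{thm:amrtincarnotgroups}. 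To verify $\mbox{div}_X\mathbf z=0$ I would test against $\varphi\in C_c^\infty(\Omega)$, use Lemma \ref{lem:horizontalgradient} to recognise $\langle z,X\varphi\rangle$ as the limit of $\varepsilon^{-1}(\varphi(x\delta_\varepsilon(z))-\varphi(x))$, and undo the rescaling $y=x\delta_\varepsilon(z)$; this rewrites (a multiple of) $\int_\Omega\mathbf z\cdot X\varphi$ as the limit of $\varepsilon^{-Q-1}\int_\Omega\int_{U(x,\varepsilon)}g_\varepsilon(x,y)(\varphi(y)-\varphi(x))\,d\mathcal L^n(y)\,d\mathcal L^n(x)$. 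The part containing $\varphi(x)$ vanishes by \eqref{1-lapla.var.carnot}; the part containing $\varphi(y)$ vanishes after relabelling $x\leftrightarrow y$, using the symmetry $y\in U(x,\varepsilon)\Leftrightarrow x\in U(y,\varepsilon)$ of the box ball, the antisymmetry \eqref{lasim.carnot}, and \eqref{1-lapla.var.carnot} once more. Hence the whole expression is identically zero for small $\varepsilon$, so $\mbox{div}_X\mathbf z=0$ in $\Omega$.

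\emph{Interior and boundary conditions.} Given a field $\mathbf z$ with $\mbox{div}_X\mathbf z=0$ and $\|\mathbf z\|_\infty\le1$, the remaining conditions would follow from an energy balance. The sign relation \eqref{1-lapla2.var.carnot} gives $g_\varepsilon(x,y)\big((u_\varepsilon)_\psi(y)-(u_\varepsilon)_\psi(x)\big)=|(u_\varepsilon)_\psi(y)-(u_\varepsilon)_\psi(x)|$; rescaled, the right-hand side is the nonlocal total variation, whose $\liminf$ is bounded below by lower semicontinuity and the structure \eqref{eq:structureofmu}, while the left-hand side is a pairing of $G_\varepsilon$ against difference quotients that I would identify, via Green's formula (Theorem \ref{thm:greenformulacarnot}), with $\int_\Omega(\mathbf z,Xu)+\int_{\partial\Omega}[\mathbf z\cdot\nu]_X(h-u)\,d|\partial\Omega|_X$. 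Minimality of $u_\varepsilon$ against the competitor $\psi|_\Omega$, divided by $\varepsilon$, bounds the same energy from above. Because $\|\mathbf z\|_\infty\le1$ and $|[\mathbf z\cdot\nu]_X|\le1$ (the latter using $X$-regularity, exactly as in the proof that (ii) implies (i)) always produce the reverse inequalities, all of them collapse to equalities, forcing $(\mathbf z,Xu)=|Xu|$ in $\Omega$ and $[\mathbf z\cdot\nu]_X\in\mbox{sign}(h-u)$ on $\partial\Omega$ with $h=T^-\psi$.

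\emph{Main obstacle.} The genuine difficulty is that the construction of $\mathbf z$ and the limit identification are coupled. The candidate $\mathbf z_i(x)=\kappa^{-1}\int_{U(0,1)}G(x,z)\,z_i\,d\mathcal L^n(z)$ dictated by the divergence computation is problematic for the $L^\infty$ bound: since the horizontal section of $U(0,1)$ is a cube rather than a Euclidean ball, this average need not satisfy $\|\mathbf z\|_\infty\le1$, so one must either correct the construction or carefully reckon with the anisotropy of the box ball. This is precisely where the horizontal symmetry of $U(0,1)$ (already used to kill the cross terms $\int_{U(0,1)}z_iz_j\,d\mathcal L^n=0$) must be exploited most delicately. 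Equally hard is the passage to the limit in the bilinear pairing of the third paragraph: both $G_\varepsilon$ and the difference quotients of $(u_\varepsilon)_\psi$ converge only weakly, so identifying their limit with $(\mathbf z,Xu)$ is not a product of limits and calls for a compensated-compactness argument in which the divergence-free structure and the sign relation \eqref{1-lapla2.var.carnot} compensate one another, with the lower semicontinuity inequality upgraded to an equality by minimality so as to pin down the alignment of $G$ with the polar vector of $Xu$. A final delicate point is the boundary layer: one must check that the nonlocal interactions straddling $\partial\Omega$ converge to the integrand built from the one-sided trace $T^-\psi$, which is exactly why the whole analysis is carried out on $\Omega_1$.
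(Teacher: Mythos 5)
Your proposal has the same skeleton as the paper's proof: uniform nonlocal total-variation bound from minimality, Theorem \ref{thm:amrtincarnotgroups} for compactness and the structure of the limit, a weak* limit of the rescaled $g_\varepsilon$, a horizontal vector field built from its moments, divergence-free via antisymmetry and relabelling (your second paragraph is essentially the paper's computation), and a final energy balance. However, the two points you defer to your ``main obstacle'' paragraph are exactly the steps the paper has to work for, and your proposal does not supply them; worse, with your normalisation the first one is not just unproved but false. Concretely: you normalise by the second moment $\kappa=\int_{U(0,1)}z_1^2\,d\mathcal L^n(z)$, but the horizontal slice of $U(0,1)$ is a cube, so $\int_{-1}^1 z^2\,dz=2/3$ while $\int_{-1}^1|z|\,dz=1$, and the best bound obtainable from $\|G\|_\infty\le 1$ is $|\mathbf z_i|\le \kappa^{-1}\int_{U(0,1)}|z_i|\,d\mathcal L^n(z)=3/2$; this value is attained by the admissible limit $G(x,z)=\mathrm{sign}(z_i)$ (which arises, e.g., from $g_\varepsilon(x,y)=\mathrm{sign}(y_i-x_i)$, using linearity of the group law in horizontal coordinates). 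The paper's resolution is not a more delicate compactness argument but a different constant: it normalises by the first absolute moment $C_{\mathbb G}=\int_{U(0,1)}|z_1|\,d\mathcal L^n(z)$, which gives $|\zeta_i|\le 1$ componentwise, and then proves the Euclidean bound $|\zeta|\le1$ by testing against an arbitrary horizontal unit vector and rotating, exploiting the horizontal symmetry of the box ball; the same constant $C_{\mathbb G}$ then reappears on both sides of the energy balance, so the final chain of inequalities closes. Your instinct that the anisotropy of the cube is the crux here is correct, but flagging it is not the same as resolving it.

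The second gap is the limit identification. You propose to pass to the limit in the pairing of $G_\varepsilon$ against the difference quotients of $(u_\varepsilon)_\psi$ --- a product of two merely weakly convergent sequences --- and appeal to an unspecified compensated-compactness argument. The paper never confronts this product. It chooses an auxiliary function $w\in C^\infty(\Omega)\cap W^{1,1}_X(\Omega)$ with $Tw=h$ (Theorem \ref{thm:vittoneextension}) and tests \eqref{1-lapla.var.carnot} against $v_\varepsilon=(u_\varepsilon)_\psi-w_\psi$, splitting the resulting identity as $H^1_\varepsilon+H^2_\varepsilon=0$. In $H^1_\varepsilon$ the sign condition \eqref{1-lapla2.var.carnot} turns the product into the integral of an absolute value --- no product of weak limits at all --- which is estimated from below by lower semicontinuity together with Theorem \ref{thm:vittonesplittingthegradient}; this is where the datum $h=T^-\psi$ enters, through the jump of $u_\psi$ across $\partial\Omega$ (note that $w_\psi$ has no such jump precisely because $Tw=T^-\psi$). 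The term $H^2_\varepsilon$ is a pairing of the weak* limit $\Lambda$ against difference quotients of the \emph{fixed} function $w_\psi$, so no two-weak-limits issue arises there either. Green's formula (Theorem \ref{thm:greenformulacarnot}) then converts $\int_{\Omega_1}\zeta\cdot Xw_\psi$ into boundary terms, and since $\|\zeta\|_\infty\le1$, $|(\zeta,Xu)|\le|Xu|$ and $|[\zeta\cdot\nu]_X|\le1$ (by $X$-regularity), the whole chain collapses to equalities, giving $(\zeta,Xu)=|Xu|$ and $[\zeta\cdot\nu]_X\in\mathrm{sign}(h-u)$. Also note that the paper works with the Euler--Lagrange system \eqref{1-lapla.var.carnot}--\eqref{1-lapla2.var.carnot} throughout; your proposed upper bound ``minimality against $\psi|_\Omega$ divided by $\varepsilon$'' is never needed. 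Without the auxiliary-function device (or a genuine substitute for it), your third paragraph is a statement of what must be proved rather than a proof of it.
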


\begin{proof}
We represent the group $\mathbb{G}$ in exponential coordinates. Since we assumed that $\psi \in BV_X(\mathbb{G}) \cap L^\infty(\mathbb{G})$, using an estimate proved in \cite[Proposition 2.12]{GM2019} (see also \cite[Theorem 3.1]{MMS}) on a subsequence (still denoted by $u_\varepsilon$) we have
\begin{equation}\label{eq:uniformestimateinfinalproof}
\int_{\Omega_1} \dashint_{U(x,\varepsilon)} |(u_\varepsilon)_\psi(y) - (u_\varepsilon)_\psi(x)| \, d\mathcal{L}^n(x) \, d\mathcal{L}^n(y) \leq M \varepsilon.
\end{equation}
Since we assumed that $\psi \in L^\infty(\mathbb{G})$, it is easy to see that also $u_\varepsilon \in L^\infty(\mathbb{G})$ and $\| (u_\varepsilon)_\psi \|_\infty \leq \| \psi \|_\infty$ (for instance, see the proofs of \cite[Theorems 2.9, 2.10]{GM2019}). In particular, $(u_\varepsilon)_\psi \rightharpoonup u_\psi$ weakly in $L^1(\Omega_1)$ on a subsequence (still denoted by $\varepsilon$). By \eqref{eq:uniformestimateinfinalproof} the sequence $(u_\varepsilon)_\psi$ on the set $\Omega_1$ satisfies the assumptions of Theorem \ref{thm:amrtincarnotgroups}, so $u \in BV_X(\Omega)$ and
\begin{equation}\label{eq:structureofthederivativeinthefinalproof}
\chi_{U(0,1)}(z) \, \chi_D(x \delta_\varepsilon(z)) \, \frac{(u_\varepsilon)_\psi(x \delta_\varepsilon(z)) - (u_\varepsilon)_\psi(x)}{\varepsilon} \rightharpoonup \chi_{U(0,1)}(z) \, \cdot \, \langle z, Xu \rangle.
\end{equation}
Moreover, on a subsequence (still denoted by $\varepsilon$) we have that
\begin{equation}\label{eq:definitionoflambda}
\chi_{U(0,1)}(z) \, \chi_D(x \delta_\varepsilon(z)) \, g_\varepsilon(x, x \delta_\varepsilon(z)) \rightharpoonup \Lambda(x,z)
\end{equation}
weakly* in $L^\infty(\Omega_1 \times \mathbb{R}^n)$, as the sequence above is bounded from above by $1$. In particular, we have $\Lambda(x,z) \leq \chi_{U(0,1)}(z)$ almost everywhere in $\Omega_1$. 

Take $v \in C_c^\infty(\Omega)$. We multiply \eqref{1-lapla.var.carnot} by $v(x)$ and integrate over $\Omega$. We obtain
\begin{equation}
\int_\Omega \int_{U(x,\varepsilon)} g_\varepsilon(x,y) \, v(x) \,d\mathcal{L}^n(y) \, d\mathcal{L}^n(x) = 0.
\end{equation}
We use the antisymmetry of $g_\varepsilon$ to obtain
\begin{equation}
\int_\Omega \int_{U(x,\varepsilon)} \chi_\Omega(y) g_\varepsilon(x,y) \, (v(y) - v(x)) \,d\mathcal{L}^n(y) \, d\mathcal{L}^n(x) = 0.
\end{equation}
Now, we change variables. We set $z = \delta_{\varepsilon^{-1}}(x^{-1}y)$, so that $z \in U(0,1)$ and $y = x \delta_\varepsilon(z)$. In particular, $d\mathcal{L}^n(y) = \varepsilon^Q d\mathcal{L}^n(z)$. The equation above takes form
\begin{equation}
\int_\Omega \int_{U(0,1)} \chi_\Omega(x \delta_\varepsilon(z)) \, g_\varepsilon(x,x \delta_\varepsilon(z)) \, (v(x \delta_\varepsilon(z)) - v(x)) \, \varepsilon^Q \, d\mathcal{L}^n(z) \, d\mathcal{L}^n(x) = 0.
\end{equation}
We represent the integration over $U(0,1)$ using an indicator function and divide both sides of the equation by $\varepsilon^{Q+1}$. We obtain
\begin{equation}
\int_\Omega \int_{\mathbb{R}^n} \chi_{U(0,1)}(z) \,\chi_\Omega(x \delta_\varepsilon(z)) \, g_\varepsilon(x,x \delta_\varepsilon(z)) \, \frac{v(x \delta_\varepsilon(z)) - v(x)}{\varepsilon} \, d\mathcal{L}^n(z) \, d\mathcal{L}^n(x) = 0.
\end{equation}
We pass to the limit with $\varepsilon \rightarrow 0$. We use \eqref{eq:definitionoflambda} and Lemma \ref{lem:horizontalgradient}; the first one gives us weak convergence of the first three factors to $\Lambda(x,z)$, while the second one gives us strong convergence of the last factor to $\langle z, Xv \rangle$. We put these results together to see that
\begin{equation}\label{eq:almostzerodivergence}
\int_\Omega \int_{\mathbb{R}^n} \Lambda(x,z) \, \langle z, Xv \rangle \, d\mathcal{L}^n(z) \, d\mathcal{L}^n(x) = 0
\end{equation}
for all $v \in C_c^\infty(\Omega)$. Set $\zeta = (\zeta_1, ..., \zeta_m)$ to be the vector field defined by
\begin{equation}
\zeta_i(x) = \frac{1}{C_{\mathbb{G}}} \int_{\mathbb{R}^n} \Lambda(x,z) \, z_i \, d\mathcal{L}^n(z) \qquad \mbox{for } i=1,...,m,
\end{equation}
where $C_{\mathbb{G}}$ is a constant depending only on the Carnot group $\mathbb{G}$ and defined by the formula
\begin{equation}
C_{\mathbb{G}} := \int_{U(0,1)} |z_1| \, d\mathcal{L}^n(z) = ... = \int_{U(0,1)} |z_m| \, d\mathcal{L}^n(z),
\end{equation}
where the equalities follow from the fact that the unit ball in the box distance in $\mathbb{G}$ is symmetric in horizontal directions. The constant $C_{\mathbb{G}}$ is chosen in such a way so that $\| \zeta \|_{L^\infty(\Omega_1; \mathbb{R}^m)} \leq 1$: given a vector $\xi \in \mathbb{R}^n \backslash \{ 0 \}$ lying in the space spanned by the horizontal directions, i.e. $\xi \in \mbox{lin}(e_1,...,e_m)$, we denote by $R$ the rotation preserving the origin such that $\xi = |\xi| \cdot R e_1$. We set $z = Ry$, so that $d\mathcal{L}^n(z) = d\mathcal{L}^n(y)$, and calculate
\begin{equation}
\langle \zeta, \xi \rangle = \frac{1}{C_{\mathbb{G}}} \int_{\mathbb{R}^n} \Lambda(x,z) \,  z \cdot \xi \, d\mathcal{L}^n(z) = \frac{1}{C_{\mathbb{G}}} \int_{\mathbb{R}^n} \Lambda(x,Ry) \, Ry \cdot \xi \, d\mathcal{L}^n(y) =
\end{equation}
\begin{equation}
= \frac{1}{C_{\mathbb{G}}} \int_{\mathbb{R}^n} \Lambda(x,Ry) \, y \cdot R^{-1} \xi \, d\mathcal{L}^n(y) = \frac{1}{C_{\mathbb{G}}} \int_{\mathbb{R}^n} \Lambda(x,Ry) \, y_1 \, |\xi| \, d\mathcal{L}^n(y),
\end{equation}
so
\begin{equation}
|\langle \zeta, \xi \rangle| \leq \frac{1}{C_{\mathbb{G}}} \int_{\mathbb{R}^n} \Lambda(x,Ry) \, |y_1| \, |\xi| \, d\mathcal{L}^n(y) = |\xi|.
\end{equation}
Therefore, $\| \zeta \|_{L^\infty(\Omega_1; \mathbb{R}^m)} \leq 1$.

Coming back to equation \eqref{eq:almostzerodivergence}, it reduces to
\begin{equation}
\int_\Omega \zeta(x) \cdot Xv(x) \, d\mathcal{L}^n(x) = 0,
\end{equation}
so $\mbox{div}_X(\zeta) = 0$ as a distribution in $\Omega$. In particular, by Riesz representation theorem $\mbox{div}_X(\zeta)$ is a (zero) Radon measure; as it is absolutely continuous with respect to $\mathcal{L}^n$ and we can say that $\mbox{div}_X(\zeta) \in L^\infty(\Omega)$ with $\mbox{div}_X(\zeta) = 0$. 

It remains to show that
\begin{equation}
(\zeta, Xu) = |Xu| \qquad \mbox{as measures in }\Omega;
\end{equation}
\begin{equation}
[\zeta \cdot \nu]_X \in \mbox{sign}(h-u) \qquad |\partial\Omega|_X-\mbox{a.e. on }\partial\Omega.
\end{equation}
Let us choose a function $w \in C^\infty(\Omega) \cap W^{1,1}_X(\Omega)$ such that $Tw = h$; this is possible due to Theorem \ref{thm:vittoneextension}. Now, set $v_{\varepsilon} = (u_\varepsilon)_\psi - w_\psi$. We take the property \eqref{1-lapla.var.carnot} of the solution, multiply it by $v_{\varepsilon}$ and integrate over $\Omega_1$ to obtain
\begin{equation}
\int_{\Omega_1} \int_{U(x,\varepsilon)} g_\varepsilon(x,y) \, v_\varepsilon(x) \,d\mathcal{L}^n(y) \, d\mathcal{L}^n(x) = 0.
\end{equation}
We proceed similarly as we did in the proof of equation \eqref{eq:almostzerodivergence}: we use the antisymmetry of $g_\varepsilon$, change variables to $z = \delta_{\varepsilon^{-1}}(x^{-1}y)$, represent the integration over $U(0,1)$ using an indicator function and divide both sides of the equation by $\varepsilon^{Q+1}$. We obtain
\begin{equation}
\int_{\Omega_1} \int_{\mathbb{R}^n} \chi_{U(0,1)}(z) \,\chi_\Omega(x \delta_\varepsilon(z)) \, g_\varepsilon(x,x \delta_\varepsilon(z)) \, \frac{v_\varepsilon(x \delta_\varepsilon(z)) - v_\varepsilon(x)}{\varepsilon} \, d\mathcal{L}^n(z) \, d\mathcal{L}^n(x) = 0.
\end{equation}
We divide the above equation into two parts: we set
\begin{equation}
H^1_\varepsilon = \int_{\Omega_1} \int_{\mathbb{R}^n} \chi_{U(0,1)}(z) \,\chi_\Omega(x \delta_\varepsilon(z)) \, g_\varepsilon(x,x \delta_\varepsilon(z)) \, \frac{(u_\varepsilon)_\psi(x \delta_\varepsilon(z)) - (u_\varepsilon)_\psi(x)}{\varepsilon} \, d\mathcal{L}^n(z) \, d\mathcal{L}^n(x) =
\end{equation}
\begin{equation}
= \int_{\Omega_1} \int_{\mathbb{R}^n} \chi_{U(0,1)}(z) \,\chi_\Omega(x \delta_\varepsilon(z)) \, \bigg| \frac{(u_\varepsilon)_\psi(x \delta_\varepsilon(z)) - (u_\varepsilon)_\psi(x)}{\varepsilon} \bigg| \, d\mathcal{L}^n(z) \, d\mathcal{L}^n(x),
\end{equation}
where equality follows from property \eqref{1-lapla2.var.carnot}, and
\begin{equation}
H^2_\varepsilon = - \int_{\Omega_1} \int_{\mathbb{R}^n} \chi_{U(0,1)}(z) \, \chi_\Omega(x \delta_\varepsilon(z)) \, g_\varepsilon(x,x \delta_\varepsilon(z)) \, \frac{w_\psi(x \delta_\varepsilon(z)) - w_\psi(x)}{\varepsilon} \, d\mathcal{L}^n(z) \, d\mathcal{L}^n(x).
\end{equation}
By definition, we have $H^1_\varepsilon + H^2_\varepsilon = 0$. We will estimate the limits of both expressions separately. By \eqref{eq:structureofthederivativeinthefinalproof} and the definition of $C_{\mathbb{G}}$, we have
\begin{equation}
\liminf_{\varepsilon \rightarrow 0} H^1_\varepsilon \geq C_{\mathbb{G}} \int_{\Omega_1} |Xu_\psi| = C_{\mathbb{G}} \int_{\Omega} |Xu| + C_{\mathbb{G}} \int_{\partial \Omega} |Tu - h| \, d|\partial\Omega|_X + C_{\mathbb{G}} \int_{\Omega_1 \backslash \overline{\Omega}} |X \psi|,
\end{equation}
where the equality follows from Theorem \ref{thm:vittonesplittingthegradient}. Similarly, we will estimate the limit of $H^2_\varepsilon$. Since $w \in C^\infty(\Omega) \cap W^{1,1}_X(\Omega)$, \eqref{eq:almostzerodivergence} implies
\begin{equation}
\lim_{\varepsilon \rightarrow 0} H^2_\varepsilon = - C_{\mathbb{G}} \int_{\Omega_1} \int_{\mathbb{R}^n} \Lambda(x,z) \, \langle z, Xw_\psi \rangle \, d\mathcal{L}^n(z) \, d\mathcal{L}^n(x) = 
\end{equation}
\begin{equation}
= - C_{\mathbb{G}} \int_{\Omega_1} \zeta(x) \cdot Xw_\psi \, d\mathcal{L}^n(x).
\end{equation}
We pass to the limit with $\varepsilon \rightarrow 0$ in the equation $H^1_\varepsilon + H^2_\varepsilon = 0$, divide by $C_{\mathbb{G}}$ and obtain
\begin{equation}\label{eq:almostfinalestimate}
0 \geq \int_{\Omega} |Xu| + \int_{\partial \Omega} |Tu - h| \, d|\partial\Omega|_X + \int_{\Omega_1 \backslash \overline{\Omega}} |X \psi| - \int_{\Omega_1} \zeta(x) \cdot Xw_\psi \, d\mathcal{L}^n(x).
\end{equation}
We use the Green's formula (Theorem \ref{thm:greenformulacarnot}) and the fact that $\mbox{div}_X(\zeta) = 0$ as a function in $L^\infty(\Omega)$ to take a closer look at the last summand:
\begin{equation}
- \int_{\Omega_1} \zeta(x) \cdot Xw_\psi \, d\mathcal{L}^n(x) = - \int_{\Omega} \zeta(x) \cdot Xw \, d\mathcal{L}^n(x) - \int_{\Omega_1 \backslash \overline{\Omega}} \zeta(x) \cdot X\psi \, d\mathcal{L}^n(x) = 
\end{equation}
\begin{equation}
= - \int_{\partial\Omega} [\zeta, \nu]_X \, h \, d|\partial\Omega|_X - \int_{\Omega_1 \backslash \overline{\Omega}} \zeta(x) \cdot X\psi \, d\mathcal{L}^n(x).
\end{equation}
Since $\| \zeta \|_{L^\infty(\Omega_1; \mathbb{R}^m)} \leq 1$, we have that
\begin{equation}
\int_{\Omega_1 \backslash \overline{\Omega}} |X\psi| \, d\mathcal{L}^n(x) \geq \bigg| \int_{\Omega_1 \backslash \overline{\Omega}} \zeta(x) \cdot X\psi \, d\mathcal{L}^n(x) \bigg|,
\end{equation}
so the estimate \eqref{eq:almostfinalestimate} reduces to
\begin{equation}
0 \geq \int_{\Omega} |Xu| + \int_{\partial \Omega} |Tu - h| \, d|\partial\Omega|_X - \int_{\partial\Omega} [\zeta, \nu]_X \, h \, d|\partial\Omega|_X.
\end{equation}
Again, we use Green's formula (Theorem \ref{thm:greenformulacarnot}) and the fact that $\mbox{div}_X(\zeta) = 0$ as a function in $L^\infty(\Omega)$ to obtain
\begin{equation}
0 \geq \int_{\Omega} |Xu| + \int_{\partial \Omega} |Tu - h| \, d|\partial\Omega|_X - \int_\Omega (\zeta, Xu) + \int_{\partial\Omega} [\zeta, \nu]_X \, Tu \, d|\partial\Omega|_X - \int_{\partial\Omega} [\zeta, \nu]_X \, h \, d|\partial\Omega|_X.
\end{equation}
Since $\| \zeta \|_{L^\infty(\Omega_1; \mathbb{R}^m)} \leq 1$, by Proposition \ref{prop:boundonanzelottipairing} we have $|(\zeta,Xu)| \leq |Xu|.$ Finally, since we assumed $\Omega$ to be $X$-regular, Theorem \ref{thm:definitionofweaktrace} implies that $|[\zeta, \nu]_X| \leq 1$. We rewrite the equation above and obtain
\begin{equation}
\int_{\partial \Omega} |Tu - h| \, d|\partial\Omega|_X \leq - \int_{\Omega} |Xu| + \int_\Omega (\zeta, Xu) + \int_{\partial\Omega} [\zeta, \nu]_X \, (h-Tu) \, d|\partial\Omega|_X \leq
\end{equation}
\begin{equation}
\leq \int_{\partial\Omega} [\zeta, \nu]_X \, (h-Tu) \, d|\partial\Omega|_X \leq \int_{\partial \Omega} |Tu - h| \, d|\partial\Omega|_X,
\end{equation}
so all inequalities above are equalities. In particular, $(\zeta, Xu) = |Xu|$ as measures and $[\zeta, \nu]_X \in \mbox{sign}(h - u)$ $|\partial\Omega|_X$-a.e. on $\partial\Omega$, so $u$ is a solution to the (local) least gradient problem with boundary data $h$ in the sense of Definition \ref{dfn:1laplacecarnot}.
\end{proof}

{\bf Acknowledgements.} This work has been partially supported by the research
project no. 2017/27/N/ST1/02418 funded by the National Science Centre, Poland. The motivation for writing this paper originated during my visit to the Universitat de Val\`encia; I wish to thank them for their hospitality and Jos\'e M. Maz\'on for his support.

\bibliographystyle{siam}%
\bibliography{WG-references}%

\end{document}